\documentclass[12pt]{amsart}

\usepackage{graphicx, amsmath, amssymb, amsthm}

\usepackage{amsxtra}
\usepackage{amsopn}
\usepackage{amscd}
\usepackage{amsfonts}
\usepackage{latexsym}
\usepackage{verbatim}
\usepackage{enumerate}

\textwidth 14cm 
\textheight 21cm
\oddsidemargin .4in
\evensidemargin .4in

\newcommand{\ad}{\mathrm{ad\,}}
\newcommand{\Ad}{{\mathrm {Ad\,}}}
\newcommand{\Aut}{\mathrm{Aut\,}}
\newcommand{\Der}{\mathrm{Der\,}}
\newcommand{\Dera}{\mathrm{Dera\,}}
\newcommand{\End}{\mathrm{End\,}}
\newcommand{\Ker}{\mathrm{Ker\,}}

\newcommand{\RR}{\mathbb R}
\newcommand{\CC}{\mathbb C}
\newcommand{\FF}{\mathbb F}
\newcommand{\NN}{\mathbb N}

\newcommand{\la}{\langle}
\newcommand{\ra}{\rangle}
\newcommand{\ct}{\mathrm{T}}

\newcommand{\fa}{{\mathfrak{a}}}

\newcommand{\fb}{{\mathfrak{b}}}

\newcommand{\fd}{{\mathfrak{d}}}

\newcommand{\frg}{{\mathfrak{g}}}

\newcommand{\fh}{{\mathfrak{h}}}

\newcommand{\fri}{{\mathfrak{i}}}

\newcommand{\fj}{{\mathfrak{j}}}

\newcommand{\fl}{{\mathfrak{l}}}

\newcommand{\fm}{{\mathfrak{m}}}

\newcommand{\fn}{{\mathfrak{n}}}

\newcommand{\fo}{{\mathfrak{o}}}

\newcommand{\fp}{{\mathfrak{p}}}

\newcommand{\fr}{{\mathfrak{r}}}

\newcommand{\fs}{{\mathfrak{s}}}

\newcommand{\fv}{{\mathfrak{v}}}

\newcommand{\fz}{{\mathfrak{z}}}

\newcommand{\fsl}{\mathfrak{sl}}

\theoremstyle{plain}

\newtheorem{theorem}{Theorem}[section]
\newtheorem{lemma}[theorem]{Lemma}
\newtheorem{proposition}[theorem]{Proposition}
\newtheorem{corollary}[theorem]{Corollary}
\newtheorem{example}[theorem]{Example}

\theoremstyle{definition}
\newtheorem{definition}[theorem]{Definition}

\theoremstyle{remark}
\newtheorem*{remark}{Remark}


\begin{document}

\title[Lie algebras with ad-invariant metrics \\ A Survey]
{Lie algebras with ad-invariant metrics \\ A Survey}

\author{Gabriela P. Ovando}

%

%

%
\footnotetext[1]{Partially supported by SeCyT - UNR and ANPCyT.}

\medspace

\begin{abstract}
This is a survey work on Lie algebras with ad-invariant metrics. We summarize main features, notions and constructions, in the  aim of bringing into consideration the main research on the topic.  We also give some list of examples in low dimensions. 
\end{abstract}

\maketitle

\hskip 190pt {\it To the memory of our friend Sergio}

\section{Introduction}
An ad-invariant metric on a Lie algebra $\frg$ is a non-degenerate  symmetric bilinear form $\la \,,\, \ra$ which satisfies
\begin{equation}\label{adme}
\la [x, y],  z\ra + \la y, [x, z]\ra = 0 \qquad \mbox{ for all }x, y, z \in  \frg.
\end{equation}

 Lie algebras equipped with  ad-invariant metrics are  known as:  ``metric''\cite{KO}, ``metrised'' \cite{Bor}, ``orthogonal'' \cite{MR2},  ``quadratic'' \cite{MR},  ``regular quadratic''\cite{FS}, ''symmetric self-dual''  \cite{FSt}, ''self-dual'' \cite{Pe} Lie algebras. Lie algebras which can be endowed with an ad-invariant metric are also known as ``quadrable'' or ``metrisable''. They  became relevant some years ago when they were useful in the formulation of some physical problems, for instance in the so known Adler-Kostant-Symes scheme. More recently they appeared in conformal field theory 
precisely as the Lie algebras for which a Sugawara construction exists \cite{FSt}.  They also constitute the basis for the construction of  bi-algebras and they give rise to interesting pseudo-Riemannian geometry \cite{Co}. For instance in \cite{Ov1} a result originally due to Kostant  \cite{Kos} was revalidated for pseudo-Riemannian metrics: it states that there exists a Lie group acting by isometries on  a pseudo-Riemannian naturally reductive  space (in particular symmetric spaces) whose Lie algebra can be endowed with an ad-invariant metric. For symmetric spaces this Lie group is called  the {\it transvection group} \cite{KO2}. 

Semisimple Lie algebras constitute examples of Lie algebras admitting an ad-invariant metric since the Killing form is non-degenerate.  In the solvable case, the Killing form is degenerate so one must search for another bilinear form with the ad-invariance property.  The family of Lie algebras admitting an ad-invariant metric  strictly contains semisimple Lie algebras and also infinitely many solvable examples, although not every solvable Lie algebra can be equipped with an ad-invariant metric.

As a survey work the aim here is  to bring into consideration  the research given until now in the topic. The presentation does not follow the chronology. We expose  basic definitions,  properties and examples. We give some structure results and classifications. The main constructions for these kind of Lie algebras are: the $T^*$-extension \cite{Bor}, the double extension procedure \cite{MR,FS} or the bi-extension \cite{HK}, the twofold or quadratic extension \cite{KO,KOl}. In the case of nilpotent Lie algebras a classification scheme was proposed in \cite{BCL}. Both the $T^*$ extension and the double extension procedure are good methods to produce examples but they present difficulties when dealing with the classification problem. The twofold or quadratic extension was given to offer an answer to this problem. In this survey we expose the main results and  refer to the original papers for the complete  proofs.  Indeed as we shall see the condition of having an ad-invariant metric imposes restrictions on the structure of the Lie algebra.  But even with all this information the following  question is still open: 

\smallskip

{\it Fix a Lie algebra $\frg$, does it admits an ad-invariant metric?}

\smallskip

Making use of the properties we shall see in the next sections that in many situations  we are able to give a negative answer to this question. In cases in which these conditions fail we are not able to decide if the fixed Lie algebra does or  does not admit an ad-invariant metric. However we have methods to construct examples and to deal with many questions, such as the study of the reach geometry that Lie algebras with ad-invariant metric induce,  as for instance pseudo-Riemannian symmetric spaces \cite{KO2}, compact homogeneous spaces, naturally reductive spaces, etc.  

\section{Generalities and structure theorems}
The aim of this section is to present the basic definitions and properties of Lie algebras with ad-invariant metrics. Together with the first question exposed in the Introduction we have the following one:
\smallskip

{\it  Which are the non-isomorphic Lie algebras admitting an ad-invariant metric?}

\smallskip

Indeed any advance concerning this question implies an advance in the previous one.   In the following we refer i-isomorphic or non-i-isomorphic Lie algebras in the following sense. This notion defines an equivalence among Lie algebras with ad-invariant metrics. 

\begin{definition} Two    Lie  algebras with ad-invariant metrics  $(\frg, B)$ and $(\frg', B')$   are  isometrically
isomorphic (or i-isomorphic, for short) if there exists a Lie algebra isomorphism $\psi: \frg \to \frg'$
satisfying $B'(\psi x, \psi y)=B(x,y)$ for all $x,y\in \frg$.   In this case, $\psi$ is called an i-isomorphism.
\end{definition}

The Killing form is an ad-invariant symmetric bilinear map on any  Lie algebra.  The semisimple Lie algebras are called {\it regular quadratic} since the Killing form is non-degenerate while  the Killing form has non-trivial kernel for solvable Lie algebras. 
Recall that the kernel of a symmetric bilinear map $B$ on the Lie algebra $\frg$  is the subspace $\Ker B\subset \frg$:
$$\Ker\,B=\{x\in \frg \,:\, B(x,y)=0\,\mbox{ for all }y\in \frg\},$$
so that $B$ is non-degenerate if and only if $\Ker \,B=\{0\}$.
Many examples of Lie algebras with ad-invariant metrics arise as cotangent Lie algebras. 

\begin{example} {\bf The coadjoint representation} Let $\fh$ denote a Lie algebra with dual space $\fh^*$. The coadjoint representation $\ad^*:\fh \to \End(\fh^*)$ is given by
\begin{equation}\label{coadjoint}
x\cdot \varphi (y) = \ad^*(x)\varphi(y)=-\varphi \circ \ad(x)(y)\qquad \mbox{ for all }\varphi\in \fh^*, x,y\in \fh.
\end{equation}

The {\em cotangent Lie algebra} is defined as the Lie algebra $\ct^*\fh=\fh \oplus\fh^*$, direct sum as vector spaces,  where the Lie bracket is given by
\begin{equation}\label{cot}
[(x_1, \varphi_1),(x_2, \varphi_2)]=([x_1, x_2]_{\fh}, x_1 \cdot \varphi_2 - x_2 \cdot \varphi_1) \qquad \mbox{ for all }x_1, x_2\in\fh, \, \varphi_1, \varphi_2\in \fh^*.
\end{equation}

 The natural neutral metric on $\ct^*\fh$ 
$$\la (x_1, \varphi_1), (x_2,\varphi_2)\ra= \varphi_1(x_2)+\varphi_2(x_1)$$ 
defines an ad-invariant metric on $\ct^*\fh$.   It is not hard to see that $\ct^*\fh$ is nilpotent if we start with a nilpotent Lie algebra  $\fh$. 
\end{example}

A generalization of the cotangent Lie algebra is given by the notion of $T^*$-extension of $\fh$. In fact the cotangent Lie algebra is obtained  as a $T^*$-extension for a trivial cocycle.

\begin{definition} Let $\fh$ denote a Lie algebra and let $\theta:\fh  \times \fh \to \fh^*$ denote a 2-cocycle of $(\fh, \ad^*)$. Let $\fh \oplus \fh^*$ with the canonical neutral metric and equipped with the Lie bracket given by
\begin{equation}\label{T-ext}
[(x_1, \varphi_1),(x_2, \varphi_2)]=([x_1, x_2]_{\fh}, x_1 \cdot \varphi_2 - x_2 \cdot \varphi_1 +\theta(x_1,x_2)) \quad \mbox{ for all }x_1, x_2\in\fh, \, \varphi_1, \varphi_2\in \fh^*.
\end{equation}
Then this Lie algebra denoted as $T^*_{\theta}\fh$ will be called the $T^*$-extension of $\fh$ by $\theta$. The canonical neutral metric on $\fh \oplus \fh^*$ is ad-invariant, once $\theta$ also satisfies
$$\theta(x_1,x_2)(x_3)=-\theta(x_1,x_3)(x_2) \qquad \mbox{for all}x_1, x_2, x_3\in \fh.$$ 
\end{definition}

The notion of $T^*$-extension in relation with ad-invariant metrics appeared in \cite{Bor}. It was used  by Bordemann to   get an example
of a Lie algebra of even dimension which is not a Manin triple.
On the other hand every Lie algebra with an ad-invariant metric can be shown to be a certain Manin pair in the sense of
 Drinfel’d. See \cite{Bor}. 

In  \cite{Bor} one can find the following features of $T^*$-extensions:

\begin{enumerate}[(a)]
\item if $\fh$ is solvable then $\ct^*_{\theta}\fh$ is solvable;
\item  if $\fh$ is nilpotent then $\ct^*_{\theta}\fh$ is nilpotent;
\item  non-isomorphic Lie algebras could have
isometric $T^*$-extensions. 
\end{enumerate}

\begin{example} This example shows the difficult in the last point above. In fact the 2 -step nilpotent Lie algebra  in three  generators $\fn_{3,2}$ can be obtained in two ways as  $T^*$-extensions, that is, starting from two non-isomorphic Lie algebras.  Let $\RR^3$ be the abelian Lie algebra. Consider the canonical basis $e_1, e_2, e_3$ with dual basis $e_4, e_5, e_6$ and the  2-cocycle $\theta:\RR^3 \times \RR^3 \to {\RR^3}^*$ given by
$$\theta(e_1,e_2)=e_6 \quad \theta(e_1,e_3)=-e_5\quad \theta(e_2,e_3)= e_4.$$
On the other hand $\fn_{3,2}$ can also be obtained as the cotangent Lie algebra of the Heisenberg Lie algebra $\ct^*\fh_3$.
\end{example}

\smallskip

Let $\frg$ denote a Lie algebra equipped with an ad-invariant metric $\la\,,\,\ra$.  
If $\fm \subseteq \frg$ is a subset, then we denote by $\fm^{\perp}$ the linear subspace of $\frg$, called the orthogonal of $\fm$ (relative to $\la\,,\,\ra$), given by
$$\fm^{\perp}=\{x\in \frg, \la x, v\ra=0 \mbox{ for all } v\in \fm\}.$$
 In particular a subspace $\fm$ is called 
\begin{itemize}
\item {\em isotropic} if $\fm \subseteq \fm^{\perp}$, 
\item {\em totally isotropic} if $\fm=\fm^{\perp}$, and 
\item {\em non-degenerate} if and only if $\fm \cap \fm^{\perp}=\{0\}$.
\end{itemize}

The pair consisting of a Lie algebra and an ad-invariant metric $(\frg,\la\,,\,\ra)$   is called {\em indecomposable} (irreducible in  \cite{BE}) if it does not admit a non-degenerate proper ideal. In the contrary situation $\frg$ is called decomposable.
The proof of the next result follows easily from  an inductive procedure.

\begin{lemma} \label{le1} Let $(\frg, \la\,,\,\ra)$ denote a Lie algebra equipped with an ad-invariant metric.
\begin{enumerate}[(i)]
\item If $\fh$ is an ideal of $\frg$ then $\fh^{\perp}$ is also an ideal in $\frg$.
\item $C^r(\frg)^{\perp}=C_r(\frg)$ for all $r$, 
where $C^r(\frg)$ and $C_r(\frg)$ are the ideals in the central ascending and descending series of $\frg$, defined inductively by
$$\begin{array}{rclrcl}
C^0(\frg) & = & \frg & C_0(\frg) & = & 0\\
C^r(\frg) & = & [\frg, C^r(\frg)] & \qquad C_r(\frg) & = & \{x\in \frg : [x,\frg]\in C_{r-1}(\frg)\}
\end{array}
$$
\end{enumerate}
\end{lemma}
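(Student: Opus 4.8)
The plan is to prove part (i) first, then derive part (ii) from it together with a dimension count, proceeding by induction on $r$. For part (i), let $\fh$ be an ideal of $\frg$ and take $x \in \frg$, $v \in \fh^\perp$; I want $[x,v] \in \fh^\perp$. So let $w \in \fh$ be arbitrary. By ad-invariance \eqref{adme}, $\la [x,v], w\ra = -\la v, [x,w]\ra$. Since $\fh$ is an ideal, $[x,w] \in \fh$, and since $v \in \fh^\perp$, the right-hand side vanishes. Hence $[x,v] \perp \fh$, i.e. $[x,v] \in \fh^\perp$, proving $\fh^\perp$ is an ideal. (One should also note, for the inductive step, that $\dim \fh^\perp = \dim \frg - \dim \fh$ because the metric is non-degenerate, and that $\fh \subseteq \fk$ implies $\fk^\perp \subseteq \fh^\perp$.)

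For part (ii), first observe the base case: $C^0(\frg) = \frg$, and $C^0(\frg)^\perp = \frg^\perp = \{0\} = C_0(\frg)$ since the metric is non-degenerate. For the inductive step, assume $C^r(\frg)^\perp = C_r(\frg)$; I must show $C^{r+1}(\frg)^\perp = C_{r+1}(\frg)$. By the non-degeneracy of the metric, taking orthogonal complements is an inclusion-reversing involution on subspaces with $\dim \fm^\perp = \dim\frg - \dim\fm$, so it suffices to show $C^{r+1}(\frg) = C_{r+1}(\frg)^\perp$, or equivalently to identify $C_{r+1}(\frg)$ with $C^{r+1}(\frg)^\perp$ directly. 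The key computation: for $y \in \frg$, we have $y \in C^{r+1}(\frg)^\perp = [\frg, C^r(\frg)]^\perp$ iff $\la y, [x,z]\ra = 0$ for all $x \in \frg$, $z \in C^r(\frg)$, iff (by \eqref{adme}) $\la [x,y], z\ra = 0$ for all such $x,z$, iff $[x,y] \in C^r(\frg)^\perp$ for all $x \in \frg$, iff $[y,\frg] \subseteq C^r(\frg)^\perp = C_r(\frg)$ (using the inductive hypothesis), iff $y \in C_{r+1}(\frg)$ by the definition of the descending central series. This chain of equivalences gives $C^{r+1}(\frg)^\perp = C_{r+1}(\frg)$, completing the induction.

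The only genuine subtlety — the ``main obstacle'' such as it is — is bookkeeping with orthogonal complements in the indefinite setting: one must use that $\la\,,\,\ra$ is non-degenerate (not positive definite) to guarantee $\dim \fm^\perp = \dim\frg - \dim\fm$ and $(\fm^\perp)^\perp = \fm$, since isotropic subspaces are allowed. Once these standard linear-algebra facts are in hand, both parts are immediate from \eqref{adme}; no further structure of $\frg$ is needed. I would also remark that part (i) together with induction gives, as a byproduct, that every term $C^r(\frg)$ and $C_r(\frg)$ is an ideal and that $C^r(\frg)$ is non-degenerate if and only if $C^r(\frg) \cap C_r(\frg) = \{0\}$, which is the kind of observation used later when analyzing indecomposability.
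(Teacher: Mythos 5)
Your proof is correct and follows exactly the route the paper intends: the paper gives no written proof, remarking only that the lemma ``follows easily from an inductive procedure,'' and your argument — part (i) via a one-line application of \eqref{adme}, then induction on $r$ using the chain of equivalences $y\perp[\frg,C^r(\frg)] \iff [y,\frg]\subseteq C^r(\frg)^{\perp}=C_r(\frg)$ together with the standard facts $\dim\fm^{\perp}=\dim\frg-\dim\fm$ and $(\fm^{\perp})^{\perp}=\fm$ for a non-degenerate form — is precisely that inductive procedure, carried out correctly. (Only a cosmetic slip: $C_r(\frg)$ is the ascending, not descending, central series, and the paper's displayed recursion $C^r(\frg)=[\frg,C^r(\frg)]$ contains a typo for $[\frg,C^{r-1}(\frg)]$, which you have read correctly.)
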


Thus on any Lie algebra admitting an ad-invariant metric the next equality holds
\begin{equation}\label{e2}
\dim \frg=\dim C^r(\frg) + \dim C_r(\frg).
\end{equation}
If $C^1(\frg)$ is the commutator and $\fz(\frg)$ denotes the center of $\frg$, the case $r=1$ above gives
\begin{equation}\label{e1}
\dim \frg=\dim \fz(\frg) + \dim C^1(\frg),
\end{equation}
which easily implies that solvable Lie algebras admitting an ad-invariant metric have non-trivial center.

\begin{example} \label{4dim} Making use of the previous definitions and properties one can obtain the  Lie algebras admitting an ad-invariant metric in low dimensions. The Lie algebras of dimension less or equal four admitting an ad-invariant  metric are (see \cite{BOV} for the proof):
\begin{enumerate}[(i)]
\item In dimension = 1, 2  only the abelian Lie algebra. In fact the non-abelian solvable Lie algebra of dimension two admits no ad-invariant metric since it has a trivial center.

\item in dimension = 3, the simple Lie algebras  $\mathfrak{sl}(\RR)$ and $\mathfrak{so}(3)$ and the abelian Lie algebra. 

\item in dimension = 4, the trivial extension of three-dimensional simple Lie algebras, $\RR \times \mathfrak{sl}(2,\RR)$, $\RR \times \mathfrak{so}(3,\RR)$;  and the solvable Lie algebras: the oscillator Lie algebra $\mathfrak{osc}$ and another semidirect extension of the Heisenberg Lie algebra $\fb$ which corresponds to the known Boidol group, also called as the diamond Lie algebra. If one denotes by $\fh_3$ the Heisenberg Lie algebra of dimension three which is spanned by $\fh=span\{e_1, e_2, e_3\}$ with $[e_1, e_2]=e_3$ then 
\begin{itemize} 
\item $\mathfrak{osc}=span\{e_0,e_1,e_2,e_3\}$ and $[e_0, e_1]=e_2, \,[e_0, e_2]=-e_1,\,[e_1, e_2]=e_3$,
\item  $\fb=span\{e_0,e_1,e_2,e_3\}$ and $[e_0, e_1]=e_1, \,[e_0, e_2]=-e_2,\,[e_1, e_2]=e_3$.
\end{itemize}
\end{enumerate}
The proof follows the next schematic ideas. Assume $\frg$ is a Lie algebra of dimension four admitting an ad-invariant metric. Start  by considering the Levi decomposition of $\frg=\fr\oplus \fs$ where $\fr$ is the radical and $\fs$ is semisimple. If $\fs$ is non-trivial then it has dimension three and $\fr$ is decomposable. Otherwise $\fr$ is trivial and $\frg$ is solvable. In this situation assuming that $\frg$ is indecomposable one can see that such $\frg$ should satisfy $\dim\fz(\frg)=1$ and $\dim C^1(\frg)=3$. But $\fz(\frg)\subset C^1(\frg)$ and $C^1(\frg)/\fz(\frg)$ is isomorphic to a non-degenerate subspace of $C^1(\frg)$, see  the next remark to prove this. So $\frg=\RR \oplus C^1(\frg)$ is the Lie algebra of the semidirect product of $\RR$ and $C^1(\frg)$ is the Heisenberg Lie algebra  of dimension three. 
\end{example}

\begin{remark} Let $\frg$ denote a Lie algebra with an ad-invariant metric $\la\,,\,\ra$ and assume the center of $\frg$, denoted $\fz$,  is non-trivial. If $\fm\subset \fz$ is a subspace such that $\fz=\fm\oplus C$, where $C=C^1(\frg)\cap \fz$, then $\fm$ is non-degenerate 
and so  $\frg$ is decomposable. 
\end{remark}

\begin{example} Let $\frg$ denote a $2$-step nilpotent Lie algebra equipped with an ad-invariant metric. Assume $\fz(\frg)=C^1(\frg)$, then by Equation (\ref{e1}) the metric is neutral and $\dim \frg = 2 \dim \fz(\frg)$.  As a consequence for instance the Heisenberg Lie algebra $\fh_n$ 
cannot  be equipped with an  ad-invariant metric. This example will generalized soon. 
\end{example}

Recall that a 2-step nilpotent Lie algebra $\fn$ is called {\em non-singular} if the maps $\ad_x:\fn \to \fz$ is non-singular for all $x\in \fn - \fz$, where $\fz$ denotes the center of $\fn$. 

Suppose $\fn=\fz\oplus \fv$, as direct sum as vector spaces.  Assume that $\fn$ is non-singular. Taking $x\in \fv$ it is not hard to see that $\dim \fv \geq \dim \fz +1$. Assume now $\fn$ admits an ad-invariant metric and it is indecomposable, so that this implies that $\fz=C^1(\fn)$. Making use of  Equation \ref{e1} above we have
$$\dim \fn= 2\dim \fz= \dim \fz \oplus \fv \geq 2\dim \fz + 1$$   
which gives a contradiction. This proves the next result.

\begin{corollary} Non-singular 2-step nilpotent Lie algebras cannot be endowed with an ad-invariant metric.  
\end{corollary}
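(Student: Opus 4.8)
The plan is to argue by contradiction, pitting two estimates on $\dim\fn$ against each other: one forced by non-singularity, the other by the existence of an ad-invariant metric. So suppose $\fn$ is a non-singular $2$-step nilpotent Lie algebra carrying an ad-invariant metric; note that a genuine $2$-step algebra is non-abelian, so $C^1(\fn)\ne 0$ and $\fz(\fn)\subsetneq\fn$.

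First I would exploit non-singularity. Write $\fn=\fz\oplus\fv$ as vector spaces with $\fz=\fz(\fn)$, and pick $x\in\fv\setminus\{0\}$, which is possible since $\fv\ne 0$. As $\fn$ is $2$-step, $\ad_x$ sends $\fn$ into $\fz$ and its kernel contains $\fz\oplus\RR x$. Non-singularity forces $\ad_x\colon\fn\to\fz$ to have maximal rank; since it cannot be injective, it must be onto $\fz$, so $\dim\ker\ad_x=\dim\fn-\dim\fz=\dim\fv$. As $\dim\ker\ad_x\ge\dim\fz+1$, this gives $\dim\fv\ge\dim\fz+1$, hence
\[
\dim\fn=\dim\fz+\dim\fv\ge 2\dim\fz+1 .
\]

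For the opposing bound I would invoke the ad-invariant metric. Being $2$-step, $\fn$ satisfies $C^1(\fn)\subseteq\fz(\fn)$, so $\dim C^1(\fn)\le\dim\fz(\fn)$; on the other hand Equation~(\ref{e1}) --- the case $r=1$ of Lemma~\ref{le1}(ii) --- gives $\dim\fn=\dim\fz(\fn)+\dim C^1(\fn)$. Together these yield $\dim\fn\le 2\dim\fz(\fn)$, contradicting the previous display. This is a touch more direct than the route sketched just before the statement, which instead reduces to the indecomposable case and uses the Remark to get $\fz(\fn)=C^1(\fn)$ before applying Equation~(\ref{e1}); either way works.

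The one step needing care is the first: reading ``$\ad_x\colon\fn\to\fz$ is non-singular'' correctly as surjectivity --- injectivity is impossible, since $\fz\oplus\RR x$ always lies in the kernel --- and checking that this extra dimension beyond $\fz$ is genuinely present. Everything after that is arithmetic with the two formulas for $\dim\fn$.
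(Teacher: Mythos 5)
Your proof is correct and is essentially the paper's own argument, given in the paragraph immediately preceding the statement: both pit the non-singularity estimate $\dim\fv\ge\dim\fz+1$ (coming from $\fz\oplus\RR x\subseteq\ker\ad_x$ and surjectivity of $\ad_x$ onto $\fz$) against the identity $\dim\fn=\dim\fz(\fn)+\dim C^1(\fn)$ of Equation~(\ref{e1}). The one genuine difference is an improvement: where the paper first reduces to the indecomposable case in order to get $\fz(\fn)=C^1(\fn)$ (a reduction it does not actually justify, though non-singularity does force indecomposability), you use only the inclusion $C^1(\fn)\subseteq\fz(\fn)$, automatic for any $2$-step nilpotent algebra, to get $\dim\fn\le 2\dim\fz(\fn)$ directly; this makes the contradiction work without any decomposition hypothesis.
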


Does a singular Lie algebra  admit an ad-invariant metric? The answer is no. One can see that the condition in Equation (\ref{e1}) (and hence Eq. (\ref{e2})) is not sufficient for a 2-step nilpotent Lie algebra to admit an ad-invariant metric as shown for instance in \cite{Ov2}.

Astrahancev in \cite{As}  studied conditions under which a metrizable Lie algebra $\frg$  can be decomposed into the orthogonal sum of non-degenerate ideals. The results  depend on the following main theorem: If $\frg$ is indecomposable (relative to an ad-invariant metric $B$), but $\frg$ is the sum of two commuting ideals, then one ideal must be $\frg$ itself and the other must be central. As a corollary, the indecomposability of $\frg$ is independent of the choice of $B$. 

In his work on superalgebras of low dimensions Duong gave some technical results  to find non i-isomorphic Lie algebras with ad-invariant metrics. One of the points to see is the indecomposability or irreducibility. For instance he gives a condition on the 2-cocycle $\theta$ to make  $\ct^*_{\theta}\frg$  decomposable. See \cite{Du}.

\begin{example} The $T^*$-extension does not exhaust the possibilities for constructing Lie algebras. In fact, in dimension four one gets the oscillator Lie algebra which is not the $T^*$-extension of the solvable Lie algebra of dimension two, which correspond to the diamond Lie algebra. 
\end{example}

 Bordemann proved the following result. 

\begin{proposition} \cite{Bor}
 Every finite-dimensional nilpotent 
Lie algebra of even dimension with an ad-invariant metric ``is'' a suitable $T^*$-extension. 
\end{proposition}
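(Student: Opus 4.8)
The plan is to reduce the statement to two independent facts and then combine them. \textbf{Fact 1 (an algebraic characterisation).} A quadratic Lie algebra $(\frg,\la\,,\,\ra)$ of dimension $2n$ is i-isomorphic to a $T^*$-extension $T^*_{\theta}\fh$ for some Lie algebra $\fh$ and some admissible $2$-cocycle $\theta$ \emph{if and only if} $\frg$ contains a \emph{Lagrangian ideal}, i.e.\ an ideal $\fi$ with $\fi=\fi^{\perp}$; and in that case one may take $\fh=\frg/\fi$. \textbf{Fact 2 (existence of a Lagrangian ideal).} Every finite-dimensional nilpotent quadratic Lie algebra of even dimension contains a Lagrangian ideal. (Here I would work over $\CC$, or an algebraically closed field of characteristic zero; the real case is commented on at the end.) Granting Facts 1 and 2, the theorem follows at once: apply Fact 2 to produce $\fi$, then Fact 1 to conclude $\frg\cong_i T^*_{\theta}(\frg/\fi)$.

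For Fact 1, the easy direction is that $\{0\}\oplus\fh^{*}$ is a Lagrangian ideal of $T^*_{\theta}\fh$. Conversely, given a Lagrangian ideal $\fi$, the form $\la\,,\,\ra$ induces a perfect pairing $\frg/\fi\times\fi\to k$, so $\fi\cong(\frg/\fi)^{*}$; put $\fh:=\frg/\fi$. Choose a totally isotropic complement $W$ to $\fi$ in $\frg$ (a Witt decomposition: since $\fi$ is a maximal totally isotropic subspace this exists in characteristic $\neq 2$), and identify $W\cong\fh$ through the projection $\frg\to\frg/\fi$. Now transport the bracket of $\frg$ through $\frg=W\oplus\fi\cong\fh\oplus\fh^{*}$: the $W$-component of $[\,\cdot\,,\cdot\,]$ on $W\times W$ is the Lie bracket of $\fh$; ad-invariance of $\la\,,\,\ra$ forces the mixed brackets $[W,\fi]$ to be exactly the coadjoint action $\ad^{*}$ and forces the $\fi$-components $[\fi,\fi]$ to vanish; and $\theta(x,y):=\mathrm{pr}_{\fi}[x,y]$ for $x,y\in W$ is alternating and satisfies, again by ad-invariance, the cyclic condition $\theta(x_1,x_2)(x_3)=-\theta(x_1,x_3)(x_2)$, while the Jacobi identity of $\frg$ becomes the $2$-cocycle identity for $\theta$. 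Thus $\frg\cong_i T^*_{\theta}\fh$.

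For Fact 2, let $\fi$ be a maximal totally isotropic ideal of $\frg$ and suppose, for contradiction, that $\fi\subsetneq\fi^{\perp}$. By Lemma \ref{le1}(i) both $\fi$ and $\fi^{\perp}$ are ideals, so $\frg$ acts on $\bar\frg:=\fi^{\perp}/\fi$ by the adjoint action; since $\fi$ is precisely the radical of $\la\,,\,\ra|_{\fi^{\perp}}$, the quotient $\bar\frg$ is again a nilpotent quadratic Lie algebra, nonzero and of even dimension $\dim\frg-2\dim\fi$, and the $\frg$-action is by $\la\,,\,\ra$-skew derivations, each acting nilpotently because $\frg$ is nilpotent. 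By Engel's theorem the space $M:=\bar\frg^{\,\frg}$ of $\frg$-fixed vectors is nonzero, and one checks $M\subseteq\fz(\bar\frg)$. If $M$ contained a nonzero isotropic vector $\bar w$, any lift $w\in\fi^{\perp}$ would give a totally isotropic ideal $\fi\oplus kw\supsetneq\fi$ (isotropic because $\la w,w\ra=0$ and $w\in\fi^{\perp}$; an ideal because $[\frg,w]\subseteq\fi$), contradicting maximality. Hence $M$ is anisotropic, so over $\CC$ we get $\dim M=1$, say $M=k\bar w$ with $\la\bar w,\bar w\ra\neq 0$. Then $k\bar w$ is a non-degenerate central ideal of $\bar\frg$ which is also a $\frg$-submodule, so $\bar\frg=k\bar w\oplus(\bar w)^{\perp}$ with $(\bar w)^{\perp}$ again a $\frg$-submodule; if $(\bar w)^{\perp}\neq 0$, Engel's theorem produces a nonzero $\frg$-fixed vector lying in $(\bar w)^{\perp}\cap M=\{0\}$, which is absurd, so $\bar\frg=k\bar w$ is one-dimensional — impossible, since $\dim\bar\frg$ is even and positive. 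Therefore $\fi=\fi^{\perp}$, proving Fact 2.

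The main obstacle is Fact 2, and within it the step that a maximal totally isotropic ideal must be Lagrangian: the whole difficulty is ruling out that one gets stuck with a small anisotropic space of $\frg$-fixed vectors in $\fi^{\perp}/\fi$. This is exactly the point where the even-dimensionality is needed (to exclude the residual one-dimensional case) and where the ground field enters (over $\CC$ an anisotropic subspace has dimension $\le 1$). Over $\RR$ the bare statement actually fails — e.g.\ $\RR^{2}$ abelian with a definite metric is not a $T^*$-extension — so there one must assume the metric has neutral signature (and argue more carefully with Witt's theorem); presumably this, together with the fact that one only gets an i-isomorphism, is what the quotation marks around ``is'' are signalling. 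The remaining points are routine: the existence of the totally isotropic complement $W$ in Fact 1, and in Fact 2 the verification that the $\frg$-action on $\fi^{\perp}/\fi$ is well defined and preserves the induced metric, both of which rest on Lemma \ref{le1}(i) and the identity $(\fi^{\perp})^{\perp}=\fi$.
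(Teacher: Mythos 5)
Your argument is correct and is essentially the proof given in the cited source \cite{Bor}: the survey itself offers no proof of this proposition, and Bordemann's argument is exactly your two-step reduction (a quadratic Lie algebra is a $T^*$-extension iff it has a Lagrangian ideal, and a maximal totally isotropic ideal of an even-dimensional nilpotent quadratic Lie algebra is Lagrangian, via Engel's theorem applied to $\fi^{\perp}/\fi$). Your closing remark about the ground field is also on target --- the result as stated holds over an algebraically closed field, and the quotation marks around ``is'' signal precisely this kind of imprecision.
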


There is another way to produce Lie algebras with ad-invariant metric. This is an inductive method, that is, starting with a quadratic Lie algebra one produces a new one of a higher dimension. 
The first outline of this  construction 
was given in the book of V. Kac \cite{Kac} for solvable Lie algebras. Later it appeared as the double extension procedure in \cite{MR}. Also in \cite{FS}   the ideas of Kac are taken, as the authors  declare in the work.

Start with the following data:
\begin{itemize}
\item a Lie algebra $(\fd, [\cdot, \cdot]_{\fd})$ with an ad-invariant metric $\la\,,\,\ra_{\fd}$,
\item a Lie algebra $(\fh, [\cdot, \cdot]_{\fh})$ with ad-invariant symmetric bilinear (possibly degenerate) form $B_{\fh}$,
\item a Lie algebra homomorphism $\pi : (\fh, [\cdot , \cdot ]) \to \Dera(\fd, \la\, ,\,\ra_{\fd})$ from $\fh$ to the Lie algebra of skew-symmetric derivations of $\fd$, denoted by $\Dera(\fd, \la\, ,\,\ra_{\fd})$.

\end{itemize}
Consider the following vector space direct sum
$$\frg := \fh \oplus \fd \oplus \fh^*.$$
Let $Q$ be the symmetric bilinear map on $\frg$, which for $x_i\in \fd, \alpha_i \in\fh^*, h_i \in  \fh, i = 1, 2,$ is defined by
\begin{equation}\label{Q} 
Q((h_1, x_1, \alpha_1), (h_2, x_2, \alpha_2)) := B_{\fh}(h_1, h_2) + \la x_1, x_2\ra_{\fd} + \alpha_1(h_2) + \alpha_2(h_1);
\end{equation}
it is non-degenerate and of signature $sgn(Q) = sgn(\la\, ,\, \ra_{\fd}) + (\dim \fh, \dim \fh)$. 

Let $\ad_{\fh}$ denote the adjoint action of $\fh$ to itself. The Lie bracket on $\frg$ is given by
\begin{equation}\label{bra}
\begin{array}{rcl}
[(h_1, x_1, \alpha_1), (h_2, x_2, \alpha_2)]
& := &  ([h_1, h_2]_{\fh}, [x_1, x_2]_{\fd} + \pi(h_1)x_2 - \pi(h_2)x_1, \\
& & \qquad \beta(x_1, x_2) + ad^*_{\fh} (h_1) \alpha_2 - \ad^*_{\fh} (h_2) \alpha_1)
\end{array}
\end{equation}

where $\beta(x_1, x_2)(h) := \la \pi(h)x_1, x_2\ra_{\fd}$ and $\ad^*_{\fh}:\fh \to \End(\fh^*)$ denotes the coadjoint action. 
 This formula and usual computations show that
\begin{itemize}
\item  the metric $Q$ is ad-invariant with respect to this bracket.
\item While $\fh$ is a Lie subalgebra of $\frg$, in general $\fd$ is not a subalgebra. 
\item The subspace $\mathcal G(\fd) := \fd \oplus \fh^*$ is always an ideal in $\frg$, which
is obtained as a central extension of $\fd$ by the 2-cocycle $\beta$ and $\frg=\fh \oplus \mathcal G(\fd)$, said the semidirect sum of $\mathcal G(\fd)$ and $\fh$.
\end{itemize}

The resulting Lie algebra $\frg$ is called the {\em double extension} or {\em bi-extension} of $\fd$ with respect to $(\fh, \pi)$. One also says that $\frg$ is obtained by the double extension procedure or double extension process.

Notice that
\begin{itemize}
\item The commutator of $\frg$ consists of $$C(\frg)\subseteq C(\fh)+ C(\fd)+Im \pi + Im \beta + Im \ad^*_{\fh}.$$
In particular if $\dim \fh=1$ then $C(\frg)\subseteq   C(\fd)+Im \pi + Im \beta.$
\item If $\frg$ is solvable, then both $\fh$ and $\mathcal G(\fd)$ are solvable. 
\item If $\frg$ is nilpotent then both $\fh$ and $\mathcal G(\fd)$ are nilpotent. Also $\pi$ is acting by nilpotent maps.
\end{itemize}

\begin{example} In the double extension process starting with  $\fd=0$ one gets the cotangent Lie algebra $\fh\oplus \fh^*$ in (\ref{cot}).
\end{example}

\begin{example} Let  start the double extension process with $\fd=\RR^2$. If we equip  $\RR^2$ with the canonical metric $\la \,,\,\ra$ and take  the map $J$ corresponding to the canonical complex structure, which is skew-symmetric for $\la \,,\,\ra$, we get the 2-form $\beta(x,y)=\la Jx,y\ra$. The Lie algebra $\frg$ obtained after the double extension procedure is  the {oscillator Lie algebra} denoted $\mathfrak{osc}$ in Example \ref{4dim}. It is solvable and it has a Lorentzian ad-invariant metric.

If we equip  $\RR^2$ with the neutral metric and follow the double extension procedure we get the Lie algebra $\fb$ of Example \ref{4dim}, whose ad-invariant metric is neutral. 
\end{example}


The following results describe the structure of Lie algebras with ad-invariant metrics. The first one  assumes that the Lie algebra is indecomposable. The second one does not take this assumption but it needs the existence of a non-trivial center.

\begin{theorem} \label{MR} {\sc Medina and Revoy '85.}
A non-simple Lie algebra $(A,Q)$ equipped with an ad-invariant metric which is indecomposable is a double extension of some Lie algebra $(\fd, \la\,,\,\ra_{\fd})$ by a one-dimensional Lie algebra or by a simple Lie algebra. 
\end{theorem}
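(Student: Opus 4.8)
The plan is to produce, out of any proper ideal, an \emph{isotropic} ideal whose quotient behaviour forces $A$ into the double-extension pattern. If $\dim A\le 1$ then $A$ is trivially the double extension of $\fd=0$ by a one-dimensional Lie algebra, so assume $\dim A\ge 2$; since $(A,Q)$ is non-simple it then has a proper nonzero ideal, and I would take $\fri$ to be a minimal one. By Lemma~\ref{le1}(i) the subspace $\fri^{\perp}$ is an ideal, hence so is $\fri\cap\fri^{\perp}$, and by minimality of $\fri$ this intersection is $\{0\}$ or $\fri$. If it were $\{0\}$ then $\fri$ would be a proper non-degenerate ideal, contradicting indecomposability; therefore $\fri\subseteq\fri^{\perp}$, i.e.\ $\fri$ is isotropic.

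Next I would extract the module-theoretic content. Put $\fh:=A/\fri^{\perp}$. Since $Q$ is non-degenerate one has $(\fri^{\perp})^{\perp}=\fri$, so $Q$ induces a non-degenerate $A$-invariant pairing $\fri\times\fh\to\RR$; this identifies $\fri$ with $\fh^{*}$ as $A$-modules, where $A$ acts on $\fri$ by $\ad$ and on $\fh^{*}$ by the coadjoint action. Because $\fri^{\perp}$ is an ideal it acts trivially on $\fh$, hence trivially on $\fh^{*}\cong\fri$, so $[\fri^{\perp},\fri]=0$: in particular $\fri$ is abelian and central in $\fri^{\perp}$. Moreover the $A$-submodules of $\fri$ are exactly the ideals of $A$ contained in $\fri$, so minimality of $\fri$ makes it an irreducible $A$-module, whence $\fh^{*}$ — and therefore $\fh$ itself — is irreducible under $\ad_{\fh}$. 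A nonzero Lie algebra that is irreducible over itself is either one-dimensional or simple, and $\fh\ne 0$ since $\fri\ne 0$. This is the conceptual core: isotropy of a minimal ideal plus indecomposability pins down $\fh=A/\fri^{\perp}$.

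It remains to assemble the double extension. Set $\fd:=\fri^{\perp}/\fri$; as $\fri$ is the radical of $Q|_{\fri^{\perp}}$, the form $Q$ descends to a non-degenerate $\ad$-invariant metric $\la\,,\,\ra_{\fd}$, and $\fri^{\perp}=\mathcal G(\fd)$ is a central extension of $\fd$ by $\fri\cong\fh^{*}$. I would then choose a subalgebra $\fs\subseteq A$ complementary to $\fri^{\perp}$ with $\fs\cong\fh$: if $\fh$ is one-dimensional, any line transverse to $\fri^{\perp}$ serves, while if $\fh$ is simple one uses Levi's theorem — $\mathrm{rad}(A)\subseteq\fri^{\perp}$ because $A/\fri^{\perp}$ is simple, so a Levi factor of $A$ surjects onto $\fh$ and contains an ideal $\fs\cong\fh$. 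The adjoint action of $\fs$ preserves the ideals $\fri^{\perp}$ and $\fri$, so it descends to a homomorphism $\pi:\fs\cong\fh\to\Dera(\fd,\la\,,\,\ra_{\fd})$, the skew-symmetry being exactly $\ad$-invariance of $Q$.

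Finally I would put $V:=\fs\oplus\fri$ — on which $Q$ is non-degenerate, since $\fri$ pairs non-degenerately with $\fs$ and $\fri$ is isotropic — and set $\fd_{0}:=V^{\perp}$, obtaining a vector-space splitting $A=\fs\oplus\fd_{0}\oplus\fri$ in which $\fd_{0}$ is an $\fs$-invariant isometric lift of $\fd$ orthogonal to $V$. Then one checks each bracket component against (\ref{bra}) and each pairing against (\ref{Q}): the relations $[\fri^{\perp},\fri]=0$ and the $\fs$-invariance of $\fd_{0}$ give the vanishing entries and the purely-$\fd_{0}$ part of $[\fs,\fd_{0}]$, the $A$-module isomorphism $\fri\cong\fh^{*}$ gives the coadjoint terms, and the identity $Q([d_{1},d_{2}],s)=\la\pi(\bar s)\overline{d_{1}},\overline{d_{2}}\ra_{\fd}$ (another consequence of $\ad$-invariance) recovers the cocycle $\beta$. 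The main obstacle is precisely this last step: not merely exhibiting $A$ as \emph{some} extension, but arranging the two complements $\fs$ and $\fd_{0}$ compatibly — $\fs$ a subalgebra, $\fd_{0}$ both $\fs$-invariant and $Q$-orthogonal to $\fs\oplus\fri$ — so that the explicit formulas (\ref{Q}) and (\ref{bra}) match on the nose; once the decomposition $A=\fs\oplus V^{\perp}\oplus\fri$ is in place the verification is routine.
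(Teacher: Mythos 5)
Your argument is correct and is essentially the original Medina--Revoy proof (the survey itself only says ``See \cite{MR}''): pass to a minimal ideal $\fri$, use indecomposability to force it isotropic, use the non-degenerate pairing $\fri\times A/\fri^{\perp}\to\RR$ plus minimality to make $\fh=A/\fri^{\perp}$ ad-irreducible and hence one-dimensional or simple, and then split $A=\fs\oplus V^{\perp}\oplus\fri$ to read off the double-extension data; all the verifications you defer (skew-symmetry of $\pi$, the coadjoint action on $\fri\cong\fh^{*}$, the cocycle $\beta$, and the ad-invariance of $B_{\fh}:=Q|_{\fs\times\fs}$) do go through. The one slip is the base case: a double extension has dimension $\dim\fd+2\dim\fh\ge 2$, so a zero- or one-dimensional algebra is \emph{not} ``trivially the double extension of $\fd=0$ by a one-dimensional Lie algebra''; that degenerate case is a genuine exception to the statement as transcribed and is excluded by hypothesis in \cite{MR}, so it should simply be excluded here as well rather than argued away.
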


\begin{proof} See  \cite{MR}
\end{proof}

\begin{theorem} \label{FS} {\sc Favre-Santharoubane '87.}  Any n-dimensional  Lie algebra with an ad-invariant metric and 
non-trivial center (in particular an n-dimensional  solvable
Lie algebra with ad-invariant metric) is either an orthogonal direct sum of (n - 1 )-dimensional and
l-dimensional  Lie algebras with ad-invariant metrics (if it contains an anisotropic
central element) or is a double extension of an (n - 2)-dimensional Lie algebra Lie algebra with ad-invariant metric $(\fd, \la\,,\,\ra_{\fd})$.
\end{theorem}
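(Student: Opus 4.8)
The plan is to proceed by a case analysis on the existence of an \emph{anisotropic} (i.e.\ non-isotropic) central element, reducing the second alternative to the Medina--Revoy-type double extension construction described above. Let $(\frg, \la\,,\,\ra)$ be the given $n$-dimensional Lie algebra with ad-invariant metric and non-trivial center $\fz = \fz(\frg)$. First I would observe that if there exists $z \in \fz$ with $\la z, z\ra \neq 0$, then $\RR z$ is a non-degenerate central ideal; by Lemma \ref{le1}(i) its orthogonal complement $(\RR z)^{\perp}$ is an ideal, and since $\RR z$ is non-degenerate we get $\frg = \RR z \oplus (\RR z)^{\perp}$ as an orthogonal direct sum of ideals, each inheriting an ad-invariant metric. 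This is exactly the first alternative. Note that in the solvable case the center is automatically non-trivial by Equation (\ref{e1}), which justifies the parenthetical remark in the statement.

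So assume the restriction of the metric to $\fz$ is \emph{totally isotropic} on some line, or more precisely, pick a nonzero isotropic $z \in \fz$. The key structural fact I would establish is that $z^{\perp}$ is an ideal of codimension one containing $z$ (it contains $z$ since $\la z,z\ra=0$, and $z \in \fz$ forces $z^{\perp} \supseteq C^1(\frg)$ by ad-invariance, so in particular $z^{\perp}$ is an ideal; alternatively invoke Lemma \ref{le1}(i)). Then $\fd_0 := z^{\perp}/\RR z$ is a quotient of an ideal by a central ideal, hence a Lie algebra of dimension $n-2$, and the ad-invariant metric on $\frg$ descends to a well-defined non-degenerate ad-invariant metric $\la\,,\,\ra_{\fd}$ on $\fd_0$ (well-definedness because $\RR z$ is in the radical of the restricted form on $z^{\perp}$; non-degeneracy because $(z^{\perp})^{\perp} = \RR z$). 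The remaining work is to reconstruct $\frg$ as the double extension of $(\fd_0, \la\,,\,\ra_{\fd})$ by the one-dimensional Lie algebra $\fh = \RR$: one chooses a complementary isotropic line $\RR x$ with $\la x, z\ra = 1$ (a hyperbolic pair), checks that $\ad(x)$ preserves $z^{\perp}$ and induces a skew-symmetric derivation $\pi$ of $\fd_0$, and verifies that the bracket of $\frg$ matches the double-extension formula (\ref{bra}) with $\fh^* = \RR z$, the $2$-cocycle $\beta(x_1,x_2) = \la \pi(x_1)x_2, \cdot\,\ra$ absorbing the $\RR z$-component of brackets within $\fd_0$'s lift.

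I expect the main obstacle to be the bookkeeping in the last step: verifying that the identification $\frg \cong \RR x \oplus \fd_0 \oplus \RR z$ carries the Lie bracket precisely into the normal form (\ref{bra}), including checking that the $\fd_0$-part of $[u,v]$ for $u,v$ lifted from $\fd_0$ agrees with $[u,v]_{\fd_0}$ modulo $\RR z$, and that the $\RR z$-components assemble into the cocycle $\beta$ plus the coadjoint terms. One must also confirm the ad-invariance of the reconstructed metric is equivalent to the skew-symmetry of $\pi$, which is where the ad-invariance of the original metric on $\frg$ gets used. Since $\fh = \RR$ is abelian, the $\ad^*_{\fh}$ terms vanish and the formula simplifies considerably, so this is tedious but routine. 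The actual content — that an isotropic central element always exists when the anisotropic case fails, and that $z^{\perp}$ is a codimension-one ideal — is immediate from Lemma \ref{le1} and ad-invariance, so the theorem is essentially a packaging of those observations together with the double extension construction already set up in the text; for the full verification I would refer to \cite{FS}.
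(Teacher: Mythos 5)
Your proposal is correct and follows essentially the argument of the cited source \cite{FS} (the paper itself gives no proof beyond the reference): the dichotomy on whether the center contains an anisotropic element, splitting off a non-degenerate central line in the first case, and in the second case passing to $z^{\perp}/\RR z$ for an isotropic central $z$ and reassembling $\frg$ via a hyperbolic pair as a one-dimensional double extension. The points you flag as "bookkeeping" (that $\ad(x)$ descends to a skew-symmetric derivation of $\fd_0$ and that the $\RR z$-component of brackets is exactly the cocycle $\beta$) are indeed the only remaining verifications, and they go through as you describe.
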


\begin{proof}
See \cite{FS}.
\end{proof}

Apparently  in her Dissertation \cite{Ke}, V. Keith got a similar procedure for the construction of quadratic Lie algebras. This is announced in the work by Hofmann and Keith \cite{HK}. They make use of the notion of {\em bi-extension} to name this construction. With the years the method was known by the {\it double extension procedure}. However it is interesting to note that the tools and techniques used by Hofmann and Keith are the same as in \cite{MR}.

\begin{proposition} \cite{FSt} In the context of the double extension procedure above so that the Lie algebra $\frg$ is a double extension of $\fd$ via ($\fh, \pi)$. 
\begin{enumerate}[(i)]
\item If $\pi \equiv 0$, then the Lie algebra $\frg$ decomposes into an orthogonal direct sum of the Lie algebras
$\fd$ and the cotangent Lie algebra $\fh \oplus \fh^*$. 
\item  If $\pi \to \Dera(\fd, \la \,,\,\ra)$  is given by inner derivations, i.e., if there is a homomorphism $\varphi: \fh \to \fd$
such that
$\pi(H)= \ad_{\fd} \varphi(H)$,
then there exists an isometric Lie algebra isomorphism
$\Phi: \frg \to \fd \times (\fh \oplus \fh^*)$,
where the metric on the image is given by the product of the metrics $\la \,,\,\ra_{\fd}$ and the metric on $\fh \oplus \fh^*$ given by
$$\la (\alpha_1,H_1), (\alpha_2, H_2)\ra= \la H_1, H_2\ra_{\fh} + \la \varphi(H_1), \varphi(H_2)\ra_{\fd} + \alpha_1(H_2) + \alpha_2(H_1).$$

\item If $\fd$ has a factor $\fp$ with $H^1(\fp,\RR) = H^2(\fp,\RR) = 0$, then its double extension $\frg$ is
decomposable.
\end{enumerate}
\end{proposition}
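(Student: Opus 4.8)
The plan is to treat the three parts separately, in each case by exhibiting an explicit isomorphism, and to reduce (iii) to the mechanism used for (ii). Throughout I write elements of $\frg=\fh\oplus\fd\oplus\fh^*$ as triples $(h,x,\alpha)$ and use ad-invariance of $\la\,,\,\ra_{\fd}$ in the cyclic form $\la[u,v]_{\fd},w\ra_{\fd}=\la u,[v,w]_{\fd}\ra_{\fd}$. For \emph{(i)}, I would first observe that $\pi\equiv0$ forces $\beta\equiv0$, since $\beta(x_1,x_2)(h)=\la\pi(h)x_1,x_2\ra_{\fd}$; substituting into (\ref{bra}) the bracket then decouples, with $[\fd,\fd]$ staying inside $\fd$ (equal to $[\,\cdot\,,\cdot\,]_{\fd}$), the bracket of two elements of $\fh\oplus\fh^*$ staying inside $\fh\oplus\fh^*$ (reducing to the coadjoint term), and $[\fd,\fh\oplus\fh^*]=0$. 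Hence $\fd$ and $\fh\oplus\fh^*$ are commuting ideals with $\frg=\fd\oplus(\fh\oplus\fh^*)$, they are $Q$-orthogonal by (\ref{Q}), and the induced bracket on $\fh\oplus\fh^*$ is precisely (\ref{cot}) — the cotangent Lie algebra $\ct^*\fh$, carrying its canonical neutral metric when $B_{\fh}=0$. That is the asserted orthogonal direct sum.

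For \emph{(ii)} the idea is to ``untwist'' $\pi=\ad_{\fd}\varphi$ by a shear along $\varphi$. I would set
\[
\Phi(h,x,\alpha)=\bigl(x+\varphi(h)\,;\ h,\ \alpha-\la\varphi(\,\cdot\,),x\ra_{\fd}\bigr)\in\fd\times(\fh\oplus\fh^*),
\]
where $\la\varphi(\,\cdot\,),x\ra_{\fd}\in\fh^*$ denotes the functional $k\mapsto\la\varphi(k),x\ra_{\fd}$; this is plainly a linear bijection. To see that $\Phi$ is a Lie algebra homomorphism I would check it slot by slot: on $\fd$ it follows from $\varphi$ being a homomorphism, since then $[x_1+\varphi(h_1),x_2+\varphi(h_2)]_{\fd}$ reproduces the $\fd$-component of (\ref{bra}); on $\fh$ it is immediate; and on $\fh^*$ one uses $\beta(x_1,x_2)(k)=\la\varphi(k),[x_1,x_2]_{\fd}\ra_{\fd}$ together with ad-invariance to see that $\beta$ is exactly cancelled by the variation of the correction term $-\la\varphi(\,\cdot\,),x\ra_{\fd}$ along the bracket. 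For the isometry I would expand $\widetilde Q(\Phi v_1,\Phi v_2)$ for the product metric: the cross terms $\la x_i,\varphi(h_j)\ra_{\fd}$ produced by the shift in the $\fd$-slot cancel those produced by the correction in the $\fh^*$-slot, and what survives reassembles $Q(v_1,v_2)$ together with the term $\la\varphi(H_1),\varphi(H_2)\ra_{\fd}$, which is exactly the extra summand appearing in the metric on the $\fh\oplus\fh^*$-factor in the statement. The only real labour here is this bookkeeping; everything rests on $\varphi$ being a homomorphism and on invariance of $\la\,,\,\ra_{\fd}$.

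For \emph{(iii)} I would write $\fd=\fp\times\fd''$ as Lie algebras. From $H^1(\fp,\RR)=0$ one gets that $\fp$ is perfect, $\fp=[\fp,\fp]$; since $[\fp,\fd'']=0$ this already forces $\la[\fp,\fp],\fd''\ra_{\fd}=\la\fp,[\fp,\fd'']\ra_{\fd}=0$, so $\fp\perp\fd''$ and $\fd=\fp\oplus\fp^{\perp}$ is an orthogonal sum of non-degenerate ideals. Next, every derivation of $\fd$ carries $[\fp,\fp]$ into $\fp$ (because $[\fd'',\fp]=0$), hence by perfectness preserves $\fp$, and a skew-symmetric one then also preserves $\fp^{\perp}$; thus $\pi(\fh)$ respects the splitting $\fd=\fp\oplus\fp^{\perp}$. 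Moreover, by Lemma \ref{le1}(ii) applied to $\fp$ with the restricted metric, $\fz(\fp)=[\fp,\fp]^{\perp}=\fp^{\perp}=\{0\}$, so $\ad_{\fp}:\fp\to\Dera(\fp)$ is injective. Now I would run the classical argument: a skew-symmetric derivation $D$ of $\fp$ defines the $2$-form $\omega_D(x,y)=\la Dx,y\ra_{\fd}$, closed by ad-invariance, so $H^2(\fp,\RR)=0$ gives $\omega_D=d\psi$ for some $\psi\in\fp^*$; writing $\psi=\la\xi,\,\cdot\,\ra_{\fd}$ with $\xi\in\fp$ and using non-degeneracy yields $D=\pm\ad_{\fp}(\xi)$. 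Hence $\Dera(\fp)=\ad(\fp)$, and $\pi|_{\fp}=\ad_{\fp}\circ\varphi_0$ for a Lie algebra homomorphism $\varphi_0:\fh\to\fp$. Finally, performing the shear of (ii) on the $\fp$-summand alone, the subspace $\{(0,p,\la\varphi_0(\,\cdot\,),p\ra_{\fd})\,:\,p\in\fp\}$ is, by the same computation, a non-degenerate ideal of $\frg$ isomorphic to $\fp$ with its inherited metric, and it is proper (assuming $\fh\neq\{0\}$). Therefore $\frg$ is decomposable.

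The hard part will be the step in (iii) identifying $\Dera(\fp)$ with $\ad(\fp)$ out of the vanishing of $H^1$ and $H^2$ — that is, the closed-$2$-form argument together with the two reductions ``$\fp$ perfect $\Rightarrow$ every derivation preserves $\fp$'' and ``$\fp$ perfect and non-degenerate $\Rightarrow\fz(\fp)=\{0\}$''. Next in difficulty is the purely computational check in (ii) that $\Phi$ is simultaneously an isomorphism and an isometry. Part (i) and the structural skeleton of (iii) are just careful unwinding of (\ref{bra}) and (\ref{Q}).
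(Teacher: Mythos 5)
The survey itself gives no proof of this proposition (it is quoted from \cite{FSt}), so your argument has to stand on its own. Parts (i) and (iii) do: the decoupling of the bracket when $\pi\equiv 0$ is immediate, and in (iii) the chain ``$H^1(\fp,\RR)=0\Rightarrow\fp=[\fp,\fp]\Rightarrow\fp\perp\fd''$, every skew-symmetric derivation preserves $\fp$ and $\fp^{\perp}$, $\fz(\fp)=\{0\}$, $H^2(\fp,\RR)=0\Rightarrow\Dera(\fp,\la\,,\,\ra)=\ad(\fp)$'' is correct, and the shifted copy $\{(0,p,\la\varphi_0(\cdot),p\ra_{\fd}):p\in\fp\}$ is indeed a nonzero, proper, non-degenerate ideal (you do need $\fp\neq\{0\}$, implicit in the word ``factor'').

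The genuine gap is the isometry claim in (ii). Your $\Phi(h,x,\alpha)=(x+\varphi(h);h,\alpha-\la\varphi(\cdot),x\ra)$ is a correct Lie algebra isomorphism onto $\fd\times\ct^*\fh$ (the cancellation of $\beta$ works exactly as you describe), but it is \emph{not} an isometry for the target metric in the statement. Expanding, the $\fd$-slot contributes $\la x_1,x_2\ra_{\fd}+\la x_1,\varphi(h_2)\ra_{\fd}+\la\varphi(h_1),x_2\ra_{\fd}+\la\varphi(h_1),\varphi(h_2)\ra_{\fd}$; your correction in the $\fh^*$-slot removes the two cross terms but not the last summand, and the term $\la\varphi(H_1),\varphi(H_2)\ra_{\fd}$ written into the metric on the $\fh\oplus\fh^*$-factor sits on the \emph{same} side of the equation, so it adds to it rather than accounting for it. The result is $\widetilde Q(\Phi v_1,\Phi v_2)=Q(v_1,v_2)+2\la\varphi(h_1),\varphi(h_2)\ra_{\fd}$, not $Q(v_1,v_2)$. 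Two consistent repairs: (a) keep your $\Phi$ but put $-\la\varphi(H_1),\varphi(H_2)\ra_{\fd}$ in the metric of the second factor; or (b) use the full shear $\alpha\mapsto\alpha-\la\varphi(\cdot),x+\varphi(h)\ra_{\fd}$, which is an isometry for the stated sign but is a homomorphism onto $T^*_{\theta}\fh$ with the exact cocycle $\theta(H_1,H_2)=\la\varphi(\cdot),\varphi([H_1,H_2])\ra_{\fd}$ (still isomorphic to the cotangent algebra). Cleanest of all is to bypass the explicit map: verify that $\tilde\fd=\{(0,x,\la\varphi(\cdot),x\ra_{\fd}):x\in\fd\}$ is a non-degenerate ideal isometric to $(\fd,\la\,,\,\ra_{\fd})$ and compute $\tilde\fd^{\perp}=\{(h,-\varphi(h),\alpha)\}$, which carries exactly the metric displayed in the statement.
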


Favre and Santharoubane investigated isomorphism classes of double extensions of a given Lie algebra by a skew-symmetric derivation.

\begin{proposition} \label{isom} \cite{FS} Let $(\frg_d, \la\,,\,\ra_d)$ denote a one-dimensional extension of the Lie algebra $\fd$ with ad-invariant metric by the derivation $d$.  There is an isometry $\alpha: (\frg_d, \la\,,\,\ra_d) \to (\frg_t, \la\,,\,\ra_t)$  such that $\alpha(\fd \oplus \RR^*) = \fd \oplus \RR^*$ if and only if  
if and only if there exists $\lambda_0 \in \RR-\{0\}$, $x_0\in \fd$ and $\varphi \in \Aut(\fd, \la\,,\,\ra_{\fd})$ such that
$\varphi^{-1} t \varphi= \lambda_0 d + \ad(x_0)$.
\end{proposition}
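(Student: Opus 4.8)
The plan is to set up explicit coordinates on the one-dimensional double extensions and translate the existence of an isometry into a matching condition for the two derivations $t$ and $d$. Recall that if $(\fd,\la\,,\,\ra_{\fd})$ is a quadratic Lie algebra and $d$ is a skew-symmetric derivation, the double extension $\frg_d=\RR e \oplus \fd \oplus \RR e^*$ has bracket $[e,x]=d(x)$, $[x,y]=[x,y]_{\fd}+\la d(x),y\ra_{\fd}\,e^*$, with $e^*$ central, and the metric extends $\la\,,\,\ra_{\fd}$ by $\la e,e^*\ra=1$, $\la e,e\ra=\la e,\fd\ra=\la e^*,\fd\ra=\la e^*,e^*\ra=0$. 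I would first write down the analogous data for $\frg_t$, and note that $\RR e^*=\fz(\frg_d)\cap C^1(\frg_d)$ is intrinsic, while $\fd\oplus\RR e^*=(\RR e^*)^{\perp}$ is also intrinsic; this is the structural reason the hypothesis $\alpha(\fd\oplus\RR^*)=\fd\oplus\RR^*$ is natural, and it forces $\alpha(\RR e^*)=\RR e^*$ as well.

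For the "if" direction, suppose $\varphi\in\Aut(\fd,\la\,,\,\ra_{\fd})$, $\lambda_0\neq 0$ and $x_0\in\fd$ satisfy $\varphi^{-1}t\varphi=\lambda_0 d+\ad(x_0)$. I would define $\alpha$ on the three summands by $\alpha(e^*)=\lambda_0^{-1}e^*$, $\alpha(x)=\varphi(x)$ for $x\in\fd$, and $\alpha(e)=\lambda_0 e + \varphi(x_0) + c\,e^*$ for a scalar $c$ to be chosen so that $\la\alpha(e),\alpha(e)\ra=0$ (this is solvable precisely because $\la e^*,\cdot\ra$ is nonzero on $e$). Then one checks directly that $\alpha$ preserves all brackets: the $\fd$-bracket because $\varphi$ is an automorphism; the $[e,x]$-bracket because $\alpha[e,x]=\varphi(d(x))$ while $[\alpha e,\alpha x]=\lambda_0 t(\varphi x)+\ad(\varphi x_0)(\varphi x)=\varphi(\lambda_0 d x + \ad(x_0)x)$, using the relation; and the $e^*$-component of $[x,y]$ using that $\varphi$ is an isometry together with the same relation. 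The metric identities are immediate from the definition of $\alpha$ on $e^*$ and $\fd$ and the choice of $c$.

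For the "only if" direction I would run this backwards. Given an isometry $\alpha$ with $\alpha(\fd\oplus\RR^*)=\fd\oplus\RR^*$, write $\alpha(e^*)=\lambda_0^{-1}e^*$ (some $\lambda_0\neq0$, since $\alpha$ preserves the line $\RR e^*$), $\alpha(x)=\varphi(x)+\ell(x)e^*$ for $x\in\fd$ with $\varphi\colon\fd\to\fd$ linear and $\ell\in\fd^*$, and $\alpha(e)=\mu e + x_1 + c e^*$. Comparing the $\RR e$-components of $\alpha[e,x]=[\alpha e,\alpha x]$ shows $\mu=\lambda_0$ after pairing against the metric, and then comparing $\fd$-components gives $\varphi(d(x))=\lambda_0 t(\varphi x)+[x_1,\varphi x]_{\fd}$; since $\alpha$ preserves $\la\,,\,\ra_{\fd}$ on $\fd$ (the $e^*$-terms are null), $\varphi\in\Aut(\fd,\la\,,\,\ra_{\fd})$, and setting $x_0:=\varphi^{-1}(x_1)$ and rearranging yields $\varphi^{-1}t\varphi=\lambda_0 d+\ad(x_0)$.

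The main obstacle is bookkeeping rather than conceptual: one must be careful that the isotropic "correction" terms (the $e^*$-components of $\alpha(e)$ and $\alpha|_{\fd}$, i.e. the scalar $c$ and the functional $\ell$) do not interfere with the bracket or metric matching — they are forced/absorbed but verifying this cleanly requires tracking each component of \eqref{bra} in the $\dim\fh=1$ case. The one genuinely substantive point is extracting $\varphi\in\Aut(\fd)$ and the scalar $\lambda_0$ from $\alpha$; everything else follows by direct substitution, so I would present the forward construction in full and leave the reverse extraction as the parallel computation, citing \cite{FS} for the details.
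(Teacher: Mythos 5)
Your overall strategy — write the double extension in coordinates $\frg_d=\RR e\oplus\fd\oplus\RR e^*$ and translate the isometry condition componentwise — is exactly the right (and the standard) one; the paper itself gives no proof and simply cites \cite{FS}. However, the explicit map you build in the ``if'' direction is not a Lie algebra homomorphism as written. The problem is the missing isotropic correction on $\fd$: if you set $\alpha(x)=\varphi(x)$ for $x\in\fd$, then comparing the $e^*$-components of $\alpha([x,y])$ and $[\alpha x,\alpha y]$ forces $\la t\varphi x,\varphi y\ra_{\fd}$ to differ from the required value by $\la [x_0,x],y\ra_{\fd}=\la [x,y],x_0\ra_{\fd}$, which is nonzero whenever $x_0\notin\fz(\fd)$ — i.e.\ precisely in the interesting case where the $\ad(x_0)$ term in the relation actually matters. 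The correct map must be $\alpha(x)=\varphi(x)+\la x,x_0\ra_{\fd}\,e^*$ (dually to the $\fd$-component of $\alpha(e)$); you explicitly acknowledge that such a functional $\ell$ appears in the ``only if'' direction but then drop it from the construction, asserting it is ``absorbed'' when in fact it is a necessary ingredient, not an optional bookkeeping term.

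There is also an internal inconsistency in the coefficients: with $\alpha(e)=\lambda_0 e+\varphi(x_0)+ce^*$ and $\alpha(e^*)=\lambda_0^{-1}e^*$, the bracket $[\alpha e,\alpha x]$ produces $\lambda_0 t(\varphi x)+[\varphi x_0,\varphi x]=\lambda_0^2\varphi(dx)+(\lambda_0+1)\varphi([x_0,x])$ rather than $\varphi(dx)$. The consistent choice (for the relation exactly as stated) is $\alpha(e)=\lambda_0^{-1}e-\lambda_0^{-1}\varphi(x_0)+ce^*$ and $\alpha(e^*)=\lambda_0 e^*$; one can of course rename $\lambda_0\mapsto\lambda_0^{-1}$ and adjust signs since the statement only asserts existence of some nonzero scalar and some $x_0$, but as written the verification you claim ``one checks directly'' does not go through. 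With these two repairs — the reciprocal scaling and the term $\la x,x_0\ra_{\fd}e^*$ on $\fd$ — the rest of your argument, including the extraction of $\varphi$, $\lambda_0$ and $x_0$ in the converse direction from $\alpha(e^*)\in\RR e^*$ and the $\fd$-component of $\alpha|_{\fd}$, is sound.
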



Let $\fz^1(\fd):=\Dera(\fd, \la\,,\,\ra)$ denote the set of skew-symmetric derivations of $\fd$. It is clear that $B^1(\fd):=\ad(\fd)\subseteq \fz^1(\fd)$. Set $H^1(\fd):=\frac{\fz^1(\fd)}{B^1(\fd)}$ and let $\mathbb P^1 H^1(\fd)$ denote the projective  space. Then the group $Auto(\fd)$ of orthogonal automorphisms of $(\fd,\la\,,\,\ra)$  operates on $\mathbb P^1 H^1(\fd)$ by
$$\alpha \cdot K [d] = K[ \alpha^{-1} d \alpha]$$
where $[d]$ denotes the image of $d$ by the projection $\Dera(\fd) \to H^1(\fd)$.

The action of $Aut(\fd)$ in $H^1(\fd)$ is identical to the adjoint
action of the Lie group $O(n)$ in its Lie algebra $\fo(n)$. Therefore one needs to determine the $O(n)$-orbits in $\fo(n)$. For the
nilpotent orbits the solution is classical: for any $d\in \fo(n)$ the $O(n)$-orbit
of $d$  equals the intersection with $\fo(n)$ of the $GL(\fd)$-orbit of $d$. This
implies that the $O(n)$-orbits are determined by the partition $(n_1, \hdots , n_t)$ of $n$
satisfying an extra  condition. See \cite{FS}.

The next method  is known as {\it twofold extension}. As declared in \cite{KO} it was used by Berard Bergery to study  symmetric spaces
(in up to now unpublished work). Kath and Olbrich applied this construction to produce  metric Lie algebras. They show 
that each metric Lie algebra belonging to the class of solvable metric Lie algebras is a twofold extension associated
with an orthogonal representation of an abelian Lie algebra. They  describe
equivalence classes of such extensions by a certain cohomology set. Next we summarize this construction and interested results in relation with this. 

Let $(\rho, \fa)$ be an orthogonal representation of an abelian Lie
algebra $\fl$ on the semi-Euclidean vector space $(\fa, \la \cdot  , \cdot \ra_{\fa})$. Furthermore, choose a 3-form $\gamma \in \Lambda^3 \fl^*$ and a cocycle $\alpha \in Z^2(\fl, \fa)$ satisfying
\begin{equation}\label{alfa}
\la \alpha(L_1,L_2), \alpha(L_3,L_4)\ra_{\fa} + \la \alpha(L_2,L_3), \alpha(L_1,L_4)\ra_{\fa} + \la \alpha(L_3,L_1), \alpha(L_2,L_4)\ra_{\fa} = 0
\end{equation}
for all $L_1,L_2,L_3,L_4 \in \fl$. Then the bilinear map $[ \cdot , \cdot ] : (\fl^* \oplus \fa \oplus \fl)^2 \to \fl^* \oplus \fa \oplus \fl$
defined by $\fl^* \subset \fz(\fl^* \oplus \fa \oplus \fl)$ and
$$
\begin{array}{rcl}
{[A_1,A_2]} &  =  &\la \rho(\cdot)A_1,A_2\ra_{\fa}\in \fl^* \\
{[A,L]}  & = &\la A, \alpha(L, \cdot )\ra_{\fa} - L(A) \in \fl^* + \fa\\
{[L_1,L_2]} & = & \gamma (L_1,L_2, \cdot ) + \alpha(L_1,L_2) \in \fl^* + \fa
\end{array}
$$
for all $L,L_1,L_2 \in \fl$, $A,A_1,A_2 \in \fa$ is a Lie bracket on $\fl^*\oplus \fa \oplus \fl$ and the symmetric bilinear
form $\la \cdot , \cdot \ra$ on $\fl^* \oplus \fa \oplus \fl$ defined by
$$\la Z_1 + A_1 + L_1,Z_2 + A_2 + L_2\ra = \la A_1,A_2\ra_{\fa} + Z_1(L_2) + Z_2(L_1)$$
for all $Z_1,Z_2 \in \fl^*$, $A_1,A_2 \in \fa$ and $L_1,L_2 \in \fl$ is an ad-invariant metric (with respect to the bracket above)  on $\fl^* \oplus \fa \oplus \fl$. The Lie algebra $(\fl^* \oplus \fa \oplus \fl, \la\cdot, \cdot \ra)$ is denoted by $ \fd_{\alpha, \gamma}(\fa, \fl,\rho)$ an it is called a {\it twofold extension}.

\begin{proposition}\cite{KO} If $(\frg, \la \cdot , \cdot \ra)$ is a non-abelian indecomposable metric Lie algebra such that 
$\frg' / \fz(\frg)$ is abelian then there exist an abelian Lie algebra $\fl$, a semi-Euclidean vector space $(\fa, \la \cdot , \cdot \ra_{\fa})$, 
an orthogonal representation $\rho$ of $\fl$ on $\fa$, a cocycle $\alpha\in Z^2(\fl, \fa)$ satisfying (\ref{alfa}), and
a 3-form $\gamma\in \Lambda^3\fl^*$ such that $(\frg, \la \cdot , \cdot \ra)$ is isomorphic to $\fd_{\alpha, \gamma}(\fa, \fl,\rho)$. In particular one can choose $\fa = \frg'/\fz(\frg)$, $\fl = \frg/\frg'$. 
\end{proposition}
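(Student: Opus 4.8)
The plan is to realize $(\frg,\la\,,\,\ra)$ concretely on the vector space $\fl^*\oplus\fa\oplus\fl$, with $\fl=\frg/\frg'$ and $\fa=\frg'/\fz(\frg)$, by choosing a linear splitting adapted to the metric and then reading $\rho$, $\alpha$ and $\gamma$ off the structure constants, checking that the identities these data must satisfy are forced by ad-invariance together with the Jacobi identity. \emph{Locating the ideals.} Applying Lemma~\ref{le1}(ii) with $r=1$ gives $\fz(\frg)=(\frg')^{\perp}$, and hence also $\frg'=\fz(\frg)^{\perp}$. Using indecomposability and the Remark following Example~\ref{4dim} I would first show $\fz:=\fz(\frg)\subseteq\frg'$: a complement $\fm$ of $C^1(\frg)\cap\fz$ in $\fz$ is a central, hence abelian, non-degenerate ideal, and it is proper because $\frg$ is non-abelian, so indecomposability forces $\fm=0$, i.e. $\fz\subseteq C^1(\frg)=\frg'$. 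Consequently $\fz$ is isotropic; the radical of $\la\,,\,\ra\big|_{\frg'}$ is $\frg'\cap(\frg')^{\perp}=\fz$, so $\fa:=\frg'/\fz$ inherits a non-degenerate symmetric form $\la\,,\,\ra_{\fa}$ and is semi-Euclidean; and the metric induces a perfect pairing between $\fl:=\frg/\frg'=\frg/\fz^{\perp}$ and $\fz$, whence $\dim\fz=\dim\fl$ and $\fz$ may be identified with $\fl^*$.

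\emph{A metric-adapted splitting.} Choose any complement $\fa'$ of $\fz$ in $\frg'$; since $\frg'=\fz^{\perp}$ one automatically has $\la\fa',\fz\ra=0$, and the projection $\fa'\to\fa$ is an isometry. Next choose a complement $W$ of $\frg'$ in $\frg$ and correct it twice: first replace each $w\in W$ by $w-a_w$, where $a_w\in\fa'$ is the unique element with $\la a_w,\cdot\ra=\la w,\cdot\ra$ on $\fa'$ (using non-degeneracy of $\la\,,\,\ra_{\fa}$), making the new complement orthogonal to $\fa'$ without altering its pairing with $\fz$; then, using the perfect pairing with $\fz$, subtract a suitable element of $\fz$ from each vector (a Witt-type normalization) to make the resulting complement $\fl'$ isotropic, again without disturbing $\fl'\perp\fa'$ or the pairing with $\fz$. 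On the decomposition $\frg=\fz\oplus\fa'\oplus\fl'$ the metric then takes exactly the standard form appearing in the definition of $\fd_{\alpha,\gamma}$, once one identifies $\fz\cong\fl^*$, $\fa'\cong\fa$ (isometrically) and $\fl'\cong\fl$.

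\emph{Reading off the data.} With respect to this decomposition $\fz$ is central; $[\fa',\fa']\subseteq[\frg',\frg']\subseteq\fz$ because $\fa=\frg'/\fz$ is abelian by hypothesis; and, writing $[x,a]=[x,a]_{\fz}+[x,a]_{\fa'}$ for $x\in\fl'$, $a\in\fa'$, one sets
$$\rho(x)a:=[x,a]_{\fa'},\qquad \alpha(x,y):=[x,y]_{\fa'},\qquad \gamma(x,y,z):=\la[x,y],z\ra .$$
Ad-invariance gives $\la[a_1,a_2],x\ra=\la\rho(x)a_1,a_2\ra_{\fa}$ with each $\rho(x)$ skew-symmetric, so $\rho$ is an orthogonal representation; the $\fa'$-component of the Jacobi identity for a triple $(x,y,a)$ yields $[\rho(x),\rho(y)]=0$, so $\rho$ factors through the abelian $\fl$; the $\fa'$-component of the Jacobi identity for a triple in $\fl'$ is the cocycle condition $\alpha\in Z^2(\fl,\fa)$; ad-invariance combined with that same Jacobi identity gives the quadratic relation (\ref{alfa}); and ad-invariance shows $\gamma$ is totally alternating, i.e. $\gamma\in\Lambda^3\fl^*$. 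Matching all structure constants with the bracket formulas defining $\fd_{\alpha,\gamma}(\fa,\fl,\rho)$ then exhibits the identity of the underlying space as an isometric Lie algebra isomorphism $\frg\cong\fd_{\alpha,\gamma}(\fa,\fl,\rho)$, with $\fa=\frg'/\fz(\frg)$ and $\fl=\frg/\frg'$ as claimed.

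\emph{Main obstacle.} The real content sits in the first two steps: deducing $\fz(\frg)\subseteq\frg'$ from indecomposability, and, above all, producing the splitting that simultaneously makes $\fl'$ isotropic, orthogonal to $\fa'$, and dually paired with $\fz$ — this Witt-type normalization is the step most likely to need care. Once the metric is in standard form the verifications in the last step are routine, if somewhat lengthy, consequences of ad-invariance and the Jacobi identity, the only delicate point being the bookkeeping of signs.
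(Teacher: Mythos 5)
Your argument is correct and follows essentially the route the survey itself sketches (and that Kath--Olbrich use in \cite{KO}): take the isotropic ideal $\fri=\fz(\frg)$ with $\fri^{\perp}=\frg'$ via Lemma \ref{le1}, use indecomposability to force $\fz(\frg)\subseteq\frg'$, build a Witt-type splitting $\frg=\fz\oplus\fa'\oplus\fl'$ adapted to the metric, and read off $\rho$, $\alpha$, $\gamma$ from the structure constants, with ad-invariance and Jacobi yielding skew-symmetry, the cocycle identity, condition (\ref{alfa}) and the alternating property of $\gamma$. The paper gives no proof of its own, deferring to \cite{KO}, but your construction is the same quadratic-extension scheme described there, and I see no gaps.
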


It is clear that for an indecomposable solvable metric Lie algebra one has that $\frg'/ \fz(\frg)$ is abelian. Conversely Kath and Olbrich proved that any  non-simple indecomposable metric Lie algebra $\frg$ of index
at most 2 satisfies $\frg'/ \fz(\frg)$ is abelian. And in this case $\frg$ is solvable. Kath and Olbrich in \cite{KO} state the following structure theorem. 

\begin{theorem}\cite{KO}
 If $(\frg, \la \cdot, \cdot \ra)$ is an indecomposable metric Lie algebra of signature
$(p, q)$, where $p \leq  q$, with non-trivial maximal isotropic centre, then there exist an abelian
Lie algebra $\fl$, a Euclidean vector space $(\fa, \la \cdot, \cdot \ra_{\fa})$, an orthogonal representation $\rho$ of $\fl$ on $\fa$, a cocycle $\alpha \in Z^2(\fl, \fa)$ satisfying (\ref{alfa}), and a 3-form $\gamma \in \Lambda^3 \fl^*$ such that
$\fd_{\alpha, \gamma} (\fa, \fl, \rho)$ is regular and $(\frg, \la \cdot , \cdot \ra)$ is isomorphic to $\fd_{\alpha, \gamma} (\fa, \fl, \rho)$.
\end{theorem}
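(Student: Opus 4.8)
The plan is to recover the twofold‑extension data from the intrinsic filtration $\fz(\frg)\subseteq\frg'\subseteq\frg$, after first using indecomposability to force $\frg$ to be solvable.

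First I would set up the data. Write the signature as $(p,q)$ with $p\le q$. By Lemma~\ref{le1}(ii) with $r=1$ one has $(\frg')^{\perp}=\fz(\frg)$, so the hypothesis that the centre is isotropic says $\fz(\frg)\subseteq\frg'$, and then $\fz(\frg)=\frg'\cap(\frg')^{\perp}$ is exactly the radical of the metric restricted to $\frg'$. Since $\fz(\frg)$ is moreover a \emph{maximal} isotropic subspace, $\dim\fz(\frg)=p$; completing it to a hyperbolic subspace of $\frg$ and taking orthogonal complements gives $\frg'=\fz(\frg)\perp V$ with the metric on $V$ positive definite of dimension $q-p$. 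Hence $\fa:=\frg'/\fz(\frg)\cong V$ is a Euclidean vector space. Also $\fl:=\frg/\frg'$ is abelian, being the abelianisation of $\frg$, and the nondegenerate metric identifies $\frg/\frg'$ with $\fz(\frg)^{*}$, so $\fz(\frg)\cong\fl^{*}$. Finally $\frg$ is non-abelian: otherwise $\fz(\frg)=\frg=(\frg')^{\perp}$ would be isotropic, forcing $\frg\subseteq\frg^{\perp}=\{0\}$.

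The main step is to prove $[\frg',\frg']\subseteq\fz(\frg)$, i.e.\ that the Lie bracket induced on $\fa$ vanishes. The metric descends to a nondegenerate, $\ad$-invariant, positive definite symmetric form on the Lie algebra $\fa=\frg'/\fz(\frg)$, so $\fa$ is reductive of compact type, $\fa=\fz(\fa)\oplus\fs$ with $\fs$ a sum of compact simple ideals. Suppose $\fs\neq 0$. A Levi--Malcev decomposition of $\frg'$ (note $\mathrm{rad}(\frg')/\fz(\frg)=\mathrm{rad}(\fa)=\fz(\fa)$, because $\fz(\frg)$ is a central, hence solvable, ideal of $\frg'$) produces a semisimple subalgebra $\fs'\subseteq\frg'\subseteq\frg$ mapping isomorphically onto $\fs\subseteq\fa$; since $\fs'\cap\fz(\frg)=0$, the metric restricted to $\fs'$ is the pull-back of the positive definite form on $\fs$, hence nondegenerate. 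Decomposing $\frg$ into its $\ad(\fs')$-isotypic components, which are pairwise orthogonal because the metric is invariant, one then extracts from $\fs'$ a proper non-degenerate ideal of $\frg$, contradicting indecomposability. Therefore $\fs=0$, $\fa$ is abelian, $[\frg',\frg']\subseteq\fz(\frg)$, and in particular $\frg$ is solvable with $\frg'/\fz(\frg)$ abelian. The extraction of the non-degenerate ideal in this step is the one genuinely delicate point, and for the module-theoretic details I would refer to \cite{KO}.

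Now $\frg$ is non-abelian, indecomposable, and $\frg'/\fz(\frg)$ is abelian, so the Kath--Olbrich proposition recalled above (characterising such Lie algebras as twofold extensions with $\fa=\frg'/\fz(\frg)$ and $\fl=\frg/\frg'$) furnishes an orthogonal representation $\rho$ of $\fl$ on $\fa$ induced by the adjoint action, a cocycle $\alpha\in Z^{2}(\fl,\fa)$ satisfying (\ref{alfa}), a $3$-form $\gamma\in\Lambda^{3}\fl^{*}$, and an isometric isomorphism $\frg\cong\fd_{\alpha,\gamma}(\fa,\fl,\rho)$ carrying $\fz(\frg)$ onto $\fl^{*}$ and $\frg'$ onto $\fl^{*}\oplus\fa$; by the first step $\fa$ is Euclidean. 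It remains only to note that $\fd_{\alpha,\gamma}(\fa,\fl,\rho)$ is \emph{regular}: regularity is by definition the condition that $\fl^{*}$ be the whole centre of $\fd_{\alpha,\gamma}(\fa,\fl,\rho)$ and be maximal isotropic there (equivalently, that $(\rho,\alpha)$ carry no redundant summand), and under the isomorphism just produced the centre of $\fd_{\alpha,\gamma}(\fa,\fl,\rho)$ is $\fz(\frg)$, which is maximal isotropic and equals $\fl^{*}$ by construction. The precise formulation and verification of the regularity condition I would leave to \cite{KO}.
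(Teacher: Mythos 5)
The paper itself offers no proof of this theorem: it is quoted verbatim from \cite{KO} immediately after the definition of twofold extensions and of regularity, so there is nothing internal to compare your argument against line by line. Judged on its own terms, your reconstruction follows the route the surrounding text plainly intends: reduce to the preceding Proposition (non-abelian indecomposable with $\frg'/\fz(\frg)$ abelian $\Rightarrow$ twofold extension with $\fa=\frg'/\fz(\frg)$, $\fl=\frg/\frg'$) by establishing (a) that $\fa$ is Euclidean and (b) that $\fa$ is abelian, and then check regularity. Your step (a) is complete and correct: $(\frg')^{\perp}=\fz(\frg)$ from Lemma~\ref{le1}, maximal isotropy forces $\dim\fz(\frg)=p$, and $W^{\perp}/W$ for a maximal isotropic $W$ is definite. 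Your regularity argument also goes through, and in fact can be made independent of how the isomorphism acts: $\fl^{*}\subseteq\fz(\fd_{\alpha,\gamma})$ always, while $\dim\fl^{*}=\dim\frg-\dim\frg'=\dim\fz(\frg)=\dim\fz(\fd_{\alpha,\gamma})$ by Equation~(\ref{e1}), so equality is forced.

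The one genuine gap is exactly where you flag it, and it is worth being precise about why the sketch does not yet close. Having produced a compact semisimple subalgebra $\fs'\subseteq\frg'$ on which the metric is definite, you cannot conclude by decomposing $\frg$ into $\ad(\fs')$-isotypic components alone, because $\fs'$ is a priori only a subalgebra of $\frg$, not an ideal: $[\frg,\fs']$ lands in the preimage $\fz(\frg)\oplus\fs'$ of $\fs=[\fa,\fa]$ (which \emph{is} an ideal of $\frg$, being the preimage of a characteristic ideal of $\fa$), but not in $\fs'$ itself. One must first correct $\fs'$ by a cocycle argument --- every derivation of the semisimple $\fs'$ is inner and $H^{1},H^{2}$ of $\fs'$ with values in the trivial module $\fz(\frg)$ vanish, which lets one replace $\fs'$ by a conjugate copy that is an honest ideal of $\frg$; only then do orthogonality of isotypic components and nondegeneracy of the metric on $\fs'$ contradict indecomposability. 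Since you explicitly defer this module-theoretic correction to \cite{KO}, as the survey itself does for the entire theorem, the proposal stands as a correct outline with one acknowledged, fillable hole at the crux rather than a wrong turn.
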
 

The Lie algebra $\fd:= \fd_{\alpha, \gamma} (\fa, \fl, \rho)$ is called {\em regular} if $\fz(\fd)=\fl^*$. Also an equivalence condition among twofold extensions was found. 

\begin{proposition} \cite{KO} The metric Lie algebras $\fd_{\alpha_i ,\gamma_i} (\fa, \fl, \rho)$ are extension equivalent if
and only if there exists a cochain $\tau \in C^1(\fl, \fa)$ such that $\alpha_2 - \alpha_1 = d\tau$ and
$\gamma_2 - \gamma_1 = \la(\alpha_1 + \frac12 d\tau) \wedge \tau \ra_{\fa}$. 
\end{proposition}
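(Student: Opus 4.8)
The plan is to reduce the statement to explicit linear algebra by writing down the most general map that can serve as an extension equivalence. By the notion of equivalence for twofold (quadratic) extensions, such a map is an i-isomorphism $\Phi : \fd_{\alpha_1,\gamma_1}(\fa,\fl,\rho) \to \fd_{\alpha_2,\gamma_2}(\fa,\fl,\rho)$ which restricts to the identity on the distinguished central ideal $\fl^*$ and induces the identity on $\fa \cong (\fl^*)^{\perp}/\fl^*$ and on $\fl \cong \fd/(\fl^*)^{\perp}$. Relative to the decomposition $\fl^* \oplus \fa \oplus \fl$ this forces $\Phi$ to have the triangular shape
$$\Phi(Z) = Z, \qquad \Phi(A) = A + \sigma(A), \qquad \Phi(L) = L + \tau(L) + \mu(L),$$
with linear maps $\sigma : \fa \to \fl^*$, $\tau : \fl \to \fa$ and $\mu : \fl \to \fl^*$; such a $\Phi$ is automatically bijective. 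I would first impose that $\Phi$ be an isometry for $\la\cdot,\cdot\ra$: pairing $\Phi(A)$ against $\Phi(L)$ gives $\sigma(A) = -\la A,\tau(\cdot)\ra_{\fa}$, so $\sigma$ is determined by $\tau$; pairing $\Phi(L_1)$ against $\Phi(L_2)$ forces the symmetric part of the bilinear form $(L_1,L_2)\mapsto\mu(L_1)(L_2)$ to equal $-\tfrac12\la\tau(L_1),\tau(L_2)\ra_{\fa}$, while the skew part of $\mu$ (an element of $\Lambda^2\fl^*$) stays free; the remaining pairings are then automatic. So, up to the irrelevant skew part of $\mu$, the candidate $\Phi$ is completely governed by $\tau$.

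Next I would impose that $\Phi$ respect the bracket, type by type. On $[\fa,\fa]$ the identity holds automatically once $\sigma$ is substituted, since $\fl^*$ is central and $\rho$ is the same on both sides. Evaluating $\Phi([A,L]_1) = [\Phi A,\Phi L]_2$ (the $\mu$-term drops by centrality) and comparing components yields exactly the first relation $\alpha_2 - \alpha_1 = d\tau$, with $d$ the Chevalley--Eilenberg differential of the abelian Lie algebra $\fl$ with coefficients in $(\fa,\rho)$. The decisive computation is $\Phi([L_1,L_2]_1) = [\Phi L_1,\Phi L_2]_2$. Expanding the right-hand side as
$$[L_1,L_2]_2 + [L_1,\tau(L_2)]_2 + [\tau(L_1),L_2]_2 + [\tau(L_1),\tau(L_2)]_2,$$
its $\fa$-component reproduces $\alpha_2 - \alpha_1 = d\tau$ consistently, and, after substituting this relation, its $\fl^*$-component, evaluated against a third element $L_3$ and alternated over $L_1,L_2,L_3$, is an identity among the pairings $\la\alpha_i(\,\cdot\,,\,\cdot\,),\tau(\,\cdot\,)\ra_{\fa}$ and $\la\rho(\,\cdot\,)\tau(\,\cdot\,),\tau(\,\cdot\,)\ra_{\fa}$ which collapses, using skew-symmetry together with the normalization of the symmetric part of $\mu$, to precisely $\gamma_2 - \gamma_1 = \la(\alpha_1 + \tfrac12 d\tau)\wedge\tau\ra_{\fa}$. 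This gives the forward implication. For the converse, given $\tau \in C^1(\fl,\fa)$ satisfying the two relations, define $\sigma$ and $\mu$ as above (taking $\mu$ symmetric), form $\Phi$, and run the same computations backwards: $\Phi$ is then an i-isomorphism of $\fd_{\alpha_1,\gamma_1}(\fa,\fl,\rho)$ onto the metric Lie algebra with structure determined by $(\alpha_2,\gamma_2)$ — which is therefore itself a twofold extension, so in particular $\alpha_2$ automatically satisfies the cocycle condition and (\ref{alfa}) — and $\Phi$ is compatible with the extension data, so the two extensions are equivalent.

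The isometry bookkeeping and the $[\fa,\fa]$ and $[\fa,\fl]$ checks are routine. The main obstacle is the last step: recognizing the alternated $\fl^*$-component of $[\fl,\fl]$ as the twisted product $\la(\alpha_1 + \tfrac12 d\tau)\wedge\tau\ra_{\fa}$ with the correct coefficient $\tfrac12$. This requires the precise definition of the pairing $\la\,\cdot\wedge\cdot\,\ra_{\fa} : C^p(\fl,\fa)\times C^q(\fl,\fa)\to\Lambda^{p+q}\fl^*$, the skew-symmetry of each $\rho(L)$ with respect to $\la\cdot,\cdot\ra_{\fa}$, and the cocycle identity for $\alpha_1$; the factor $\tfrac12$ enters through both the symmetric normalization of $\mu$ and the symmetrization of the quadratic term $\la\tau(L_1),\tau(L_2)\ra_{\fa}$. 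No Jacobi identity needs to be re-verified anywhere, since both $\alpha_1$ and $\alpha_2$ already satisfy (\ref{alfa}).
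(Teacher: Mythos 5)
The survey itself gives no proof of this proposition (it is quoted from Kath--Olbrich), so there is nothing in-paper to compare against; your direct verification is the standard route and, as far as the structure goes, it is sound: the triangular form of $\Phi$, the determination of $\sigma$ and of the symmetric part of $\mu$ from the isometry condition, and the extraction of $\alpha_2-\alpha_1=d\tau$ from the $\fa$-component of $\Phi([L_i,\cdot]_1)=[\Phi L_i,\Phi(\cdot)]_2$ are all exactly right (up to the overall sign fixed by choosing $\Phi(L)=L\pm\tau(L)+\mu(L)$).

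One point you should fix: the symmetric normalization of $\mu$ plays \emph{no} role in deriving the relation for $\gamma_2-\gamma_1$, contrary to what you assert twice. Since $\fl^*$ is central, every $\mu$-term drops out of both sides of $\Phi([L_1,L_2]_1)=[\Phi L_1,\Phi L_2]_2$ — for the same reason you yourself invoke in the $[\fa,\fl]$ computation — so $\mu$ is needed only to make $\Phi$ an isometry and contributes nothing to the bracket identities. The coefficient $\tfrac12$ arises entirely from the following bookkeeping: the $\fl^*$-component of the $[\fl,\fl]$ identity, evaluated at $L_3$, reads
$$\gamma_2(L_1,L_2,L_3)-\gamma_1(L_1,L_2,L_3)=-\la \alpha_1(L_1,L_2),\tau(L_3)\ra_{\fa}+\la\tau(L_1),\alpha_2(L_2,L_3)\ra_{\fa}-\la\tau(L_2),\alpha_2(L_1,L_3)\ra_{\fa}-\la\rho(L_3)\tau(L_1),\tau(L_2)\ra_{\fa},$$
and substituting $\alpha_2=\alpha_1+d\tau$ and using the skew-symmetry of each $\rho(L)$ collapses the single quadratic term $\la\rho(L_3)\tau(L_1),\tau(L_2)\ra_{\fa}$ together with the $d\tau$-cross-terms to exactly $\tfrac12\la d\tau\wedge\tau\ra_{\fa}(L_1,L_2,L_3)$, giving $\gamma_2-\gamma_1=\la(\alpha_1+\tfrac12 d\tau)\wedge\tau\ra_{\fa}$ with no alternation over $L_1,L_2,L_3$ required and no input from $\mu$. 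With that correction the argument, including the converse direction, is complete.
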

 Above one has
$$\la \alpha \wedge \tau\ra_{\fa}(L_1,L_2,L_3) = \la \alpha(L_1,L_2), \tau(L_3)\ra_{\fa} + \la \alpha(L_3,L_1), \tau (L_2)\ra_{\fa}
+\la \alpha(L_2,L_3), \tau(L_1)\ra_{\fa}.$$

Take $\mathcal H^2_
C(\fl, \fa) := (Z^2_C(\fl, \fa) \times \Lambda^3 \fl^*)/C^1(\fl, \fa)$ where $Z^2
_C(\fl, \fa)\subset  Z^2(\fl, \fa)$ denotes the space of all cocycles satisfying Equation (\ref{alfa}). Thus fix $(\fa, \fl, \rho)$. The correspondence
$\fd_{\alpha, \gamma}(\fa, \fl, \rho) \to  [\alpha,\gamma] \in \mathcal H^2_C(\fl, \fa)$
defines a bijection between the extension equivalence classes of metric Lie algebras
of the form $\fd_{\alpha,\gamma} (\fa, \fl, \rho)$ and elements of $\mathcal H^2_C(\fl, \fa)$. 
Also some  decomposability conditions needed for classifications results were given. 

In \cite{KOl} the same authors are able to carry over the approach of \cite{KO} to general metric
Lie algebras.

The idea is the following: Let
$(\frg, \la\cdot, \cdot\ra)$ denote a Lie algebra equipped with an ad-invariant metric. Assume that there is an isotropic ideal
$\fri\subset \frg$ such that $\fri^{\perp}/\fri$ is abelian. Set
$\fl:=\frg/\fri^{\perp}$ and $\fa=\fri^{\perp}/\fri$.  Then $\fa$
inherits an inner product from $\frg$ and an $\fl$-action respecting this
inner product, i.e., it inherits the structure of an orthogonal $\fl$-module. Moreover, $\fri\simeq \fl^*$ as an $\fl$-module, and
$\frg$ 
can be represented as the result of two subsequent extensions of
Lie algebras with abelian kernel:
\begin{equation}\label{extension}
0 \longrightarrow \fa \longrightarrow \frg/\fri \longrightarrow \fl \longrightarrow 0, \qquad 0 \longrightarrow \fl^* \longrightarrow
\frg \longrightarrow \frg/\fri \longrightarrow 0.
\end{equation}

Vice versa, starting with the elements above satisfying some compatibility conditions one can construct a metric Lie algebra. This is the idea under the double extension procedure which can be formalised by the quadratic extension of $\fl$ by an orthogonal
$\fl$-module $\fa$. 

The cocycles defining the extensions in the sequences (\ref{extension}) represent an element in a certain cohomology set $\mathcal H^2_Q(\fl,\fa)$, and it turns out that there is a bijection between equivalence classes of such quadratic extensions and $\mathcal H^2_Q(\fl,\fa)$.

What makes the theory of quadratic extensions  useful is the fact that any metric Lie algebra $(\frg, \la\,,\,\ra)$ without simple ideals has a canonical isotropic ideal $\fri$ such that
$\fri^{\perp}/\fri$ is abelian. In other words, $(\frg,\la\cdot,\cdot\ra)$ has a canonical structure of a quadratic extension of
$\fl=\frg/\fri^{\perp}$ by $\fa=\fri^{\perp}/\fri$.  However not every quadratic extension of a Lie algebra $\fl$ by a semi-simple orthogonal $\fl$-module $\fa$ arises in this way. The obvious condition that the image of $\fl^*$ is $\fri\subset \frg$ in Equation (\ref{extension}) is not always satisfied. If it is satisfied one calls the quadratic extension {\em balanced} and the corresponding cohomology class in
$\mathcal H^2_Q(\fl, \fa)$ {\em admissible}.  Denote the set of
indecomposable admissible cohomology classes by
$\mathcal H^2_Q(\fl, \fa)_0$. It turns out that elements of
$\mathcal H^2_Q(\fl, \fa)_0$
correspond to isomorphic Lie algebras if and only if they can
be transformed into each other by the induced action of the automorphisms group $G_{\fl,\fa}$ of the pair $(\fl,\fa)$. This was the work of Kath and Olbricht. 

Another approach was recently proposed for quadratic nilpotent Lie algebras. In  \cite{BCL} Benito, de-la-Concepci\'on and Laiena develop a general classification scheme for quadratic nilpotent Lie algebras based on the use of invariant bilinear forms on free nilpotent Lie algebras. The main ideas are summarized in the next paragraphs.

Let $\fn$ denote a nilpotent Lie algebra. The {\em type} of $\fn$
is defined as
 the codimension of $C^1(\fn)$ in $\fn$. That is {\em type} of $\fn = \dim \fn -\dim C^1(\fn)$. From this it is clear that the type of $\fn$ is the cardinal of a minimal set of generators of $\fn$. 

Let $\fn_{d,t}$ denote the free t-step nilpotent Lie algebra in $d$ generators. Following \cite{Ga}, any $t$-step nilpotent Lie algebra
$\fn$ of type $d$ is a homomorphic image of $\fn_{d,t}$. Thus $\fn=\fn_{d,t}/I$ for an ideal  $I$  of $\fn_{d,t}$. 

Assume $(\fn, B)$ is a quadratic $t$-step nilpotent Lie algebra of type
$d$ and $\varphi:\fn_{d,t}/I \to \fn$ is  an isomorphism of Lie algebras. We can define on
$\fn_{d,t}$ 
the following ad-invariant symmetric
bilinear form:
$$B_1(x,y)=B(\varphi(x+I), \varphi(y+I)).$$

\begin{proposition} Let
$(\fn,B)$ be a quadratic $t$- step nilpotent Lie algebra of type
$d$ and let $\varphi: \fn_{d,t}/I \to \fn$
be an isomorphism of Lie algebras. Then:
\begin{enumerate}[(i)]
\item The map
$B_1:\fn_{d,t}\times\fn_{d,t} \to K$ above is an
invariant symmetric bilinear form on $\fn_{d,t}$.
\item One has  $\Ker(B_1)=\fn_{d,t}^{\perp}=I$ 
and it satisfies that $I\subseteq C^1(\fn_{d,t})$. 
\item The map $\bar{B}_1:\fn_{d,t}/I\times\fn_{d,t}/I \to K$ given by $\bar{B}_1(x+I,y+I)=B_1(x,y)$ is an
ad-invariant non-degenerate symmetric bilinear form on
$\fn_{d,t}/I$. 
\item $\varphi$ 
is an isometry from
$(\fn_{d,t}/I, \bar{B}_1)$ onto $(\fn, B)$. 
\end{enumerate}
\end{proposition}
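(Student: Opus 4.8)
The statement is essentially a transport-of-structure exercise, so the plan is to verify the four items in order, isolating the only substantive point, namely the inclusion $I\subseteq C^1(\fn_{d,t})$ in (ii).

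For (i), I would first introduce the composite $p:=\varphi\circ\pi:\fn_{d,t}\to\fn$, where $\pi:\fn_{d,t}\to\fn_{d,t}/I$ is the canonical projection. Since $\pi$ is a Lie algebra epimorphism and $\varphi$ a Lie algebra isomorphism, $p$ is a surjective Lie algebra homomorphism with $\Ker p=\pi^{-1}(\Ker\varphi)=I$, and by definition $B_1(x,y)=B(p(x),p(y))$. Symmetry and bilinearity of $B_1$ are then inherited from $B$. For the invariance identity one transports the computation through $p$: using $p[x,y]=[p(x),p(y)]$ and the ad-invariance of $B$ on $\fn$ one gets $B_1([x,y],z)=B([p(x),p(y)],p(z))=-B(p(y),[p(x),p(z)])=-B_1(y,[x,z])$.

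For (ii), the equality $\Ker B_1=I$ follows because $p$ is onto and $B$ is non-degenerate: $B_1(x,\cdot)\equiv 0$ on $\fn_{d,t}$ iff $B(p(x),\cdot)\equiv 0$ on $\fn$ iff $p(x)=0$ iff $x\in\Ker p=I$; and $\fn_{d,t}^{\perp}$, the orthogonal of the whole algebra, is by definition $\Ker B_1$. The one genuinely non-formal step is $I\subseteq C^1(\fn_{d,t})$, and here I would use that both $\fn_{d,t}$ and $\fn\cong\fn_{d,t}/I$ have type $d$. Since $p$ is surjective, $p(C^1(\fn_{d,t}))=C^1(\fn)$, so $p$ induces a surjection on abelianizations $\fn_{d,t}/C^1(\fn_{d,t})\to\fn/C^1(\fn)$; as both spaces have dimension $d$ (this is what type $d$ means), this surjection is an isomorphism, which forces $\Ker p=I$ to be contained in $p^{-1}(C^1(\fn))=C^1(\fn_{d,t})$. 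Equivalently, one can compare types directly: $\mathrm{type}(\fn_{d,t}/I)=\dim\fn_{d,t}-\dim\bigl(C^1(\fn_{d,t})+I\bigr)$, and this equals $\mathrm{type}(\fn_{d,t})=\dim\fn_{d,t}-\dim C^1(\fn_{d,t})$ precisely when $I\subseteq C^1(\fn_{d,t})$.

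Parts (iii) and (iv) are then formal. Since $\Ker B_1=I$ is the radical of the form, $B_1$ vanishes as soon as one of its arguments lies in $I$, so $\bar B_1(x+I,y+I):=B_1(x,y)$ is well defined on $\fn_{d,t}/I$; symmetry, bilinearity and the ad-invariance identity descend to the quotient verbatim, and non-degeneracy of $\bar B_1$ is exactly the already-proved statement $\Ker B_1=I$. Finally, $\varphi$ is a Lie algebra isomorphism by hypothesis, and it is an isometry because, unravelling the definitions, $B(\varphi(x+I),\varphi(y+I))=B_1(x,y)=\bar B_1(x+I,y+I)$ for all $x,y\in\fn_{d,t}$. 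The main obstacle is the type-preservation argument in (ii); all the rest is bookkeeping through $p$ and $\varphi$.
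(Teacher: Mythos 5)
Your proof is correct: transporting $B$ through $p=\varphi\circ\pi$ handles (i), (iii), (iv) formally, and the type-counting argument correctly pins down the one substantive claim $I\subseteq C^1(\fn_{d,t})$ (both $\fn_{d,t}$ and $\fn$ have type $d$, so the induced surjection on abelianizations is an isomorphism, forcing $\Ker p\subseteq C^1(\fn_{d,t})$). The survey states this proposition without proof, deferring to the cited work of Benito, de-la-Concepci\'on and Laliena, so there is no in-paper argument to compare against; yours is the natural one.
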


From this result it is clear that the classification of quadratic nilpotent Lie algebras is related to free nilpotent Lie algebras. 
The authors define $\mathrm{NilpQuad}_{d,t}$ as the category whose objects are the quadratic $t$-step nilpotent 
Lie algebras $(\fn,B)$  of type $d$, and whose morphisms are Lie homomorphism  $\varphi: (\fn, B)\to (\fn', B')$ preserving the bilinear forms, that is $B'(\varphi x, \varphi y)=B(x,y)$. These are called {\em metric Lie homomorphisms}. Define also
$\mathrm{Sym}_0(d,t)$ the category whose objects are the symmetric invariant bilinear forms
$B$ on the free Lie algebra $\fn_{d,t}$ for which $\Ker(B)\subseteq C^1(\fn_{d,t})$ and $C^t(\fn_{d,t})$ is not  contained in $\Ker(B)$ and whose morphisms are the metric Lie endomorphisms of
$\fn_{d,t}$ that
respect the kernel of the bilinear forms $\varphi(\Ker(B_1)\subseteq \Ker(B_2)$ for $\varphi$ a morphism between the objects $B_1$ and $B_2$. 
These categories are well defined and they are equivalent. 

\begin{theorem}\cite{BCL} The categories $\mathrm{Sym}_0(d,t)$ and
$\mathrm{NilpQuad}_{d,t}$ are equivalent.
\end{theorem}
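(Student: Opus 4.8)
The natural candidate is the \emph{quotient functor} $F\colon \mathrm{Sym}_0(d,t)\to \mathrm{NilpQuad}_{d,t}$ sending an object $B$ to $(\fn_{d,t}/\Ker B,\bar B)$ and a morphism $\varphi$ to the induced map $\bar\varphi$. The plan is to show that $F$ is well defined, essentially surjective and fully faithful; by the standard criterion this gives an equivalence. Well-definedness on objects is a matter of unwinding the hypotheses on $B$: the quotient $\fn_{d,t}/\Ker B$ is nilpotent of step at most $t$ as a quotient of $\fn_{d,t}$; since $\Ker B\subseteq C^1(\fn_{d,t})$ one has $C^1(\fn_{d,t}/\Ker B)=C^1(\fn_{d,t})/\Ker B$, so the codimension of the commutator is unchanged and the quotient is again of type $d$; the condition $C^t(\fn_{d,t})\not\subseteq\Ker B$ guarantees that it is genuinely $t$-step; and $\bar B$ is non-degenerate by the very definition of $\Ker B$ and ad-invariant because $B$ is invariant. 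On morphisms, a metric Lie endomorphism $\varphi$ with $\varphi(\Ker B_1)\subseteq\Ker B_2$ passes to the quotients as a Lie homomorphism $\bar\varphi$ with $\bar B_2(\bar\varphi x,\bar\varphi y)=\bar B_1(x,y)$, and functoriality is immediate from $\overline{\varphi\circ\varphi'}=\bar\varphi\circ\bar{\varphi'}$.

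For essential surjectivity, take $(\fn,B)$ in $\mathrm{NilpQuad}_{d,t}$. By the result of Gauger \cite{Ga} the free algebra $\fn_{d,t}$ surjects onto $\fn$, so we may fix an isomorphism $\varphi\colon\fn_{d,t}/I\to\fn$. The Proposition preceding the theorem shows that $B_1(x,y):=B(\varphi(x+I),\varphi(y+I))$ is an invariant symmetric bilinear form on $\fn_{d,t}$ with $\Ker B_1=\fn_{d,t}^{\perp}=I\subseteq C^1(\fn_{d,t})$, that $\bar B_1$ is non-degenerate, and that $\varphi$ is an i-isomorphism $(\fn_{d,t}/I,\bar B_1)\to(\fn,B)$. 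Moreover the requirement that $\fn$ be exactly $t$-step nilpotent is precisely $C^t(\fn_{d,t})\not\subseteq I$, the condition occurring in the definition of $\mathrm{Sym}_0(d,t)$; hence $B_1$ is an object of $\mathrm{Sym}_0(d,t)$ and $F(B_1)\cong(\fn,B)$.

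Fullness is handled by lifting. Given a morphism $\psi\colon\fn_{d,t}/I_1\to\fn_{d,t}/I_2$ in $\mathrm{NilpQuad}_{d,t}$, use that $\fn_{d,t}$ is free on $d$ generators and that $\pi_2\colon\fn_{d,t}\to\fn_{d,t}/I_2$ is surjective to choose a Lie endomorphism $\varphi$ of $\fn_{d,t}$ with $\pi_2\circ\varphi=\psi\circ\pi_1$. Writing $B_j=\bar B_j\circ(\pi_j\times\pi_j)$ one gets at once $B_2(\varphi x,\varphi y)=\bar B_2(\psi\pi_1 x,\psi\pi_1 y)=\bar B_1(\pi_1 x,\pi_1 y)=B_1(x,y)$, while $x\in I_1$ gives $\pi_2\varphi x=\psi\pi_1 x=0$, i.e. $\varphi(I_1)\subseteq I_2$; thus $\varphi$ is a morphism of $\mathrm{Sym}_0(d,t)$ with $F\varphi=\psi$.

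The crux, and the step I expect to be hardest, is \textbf{faithfulness}: two metric Lie endomorphisms $\varphi,\varphi'$ of $\fn_{d,t}$ with $\bar\varphi=\bar{\varphi'}$ differ by a linear map whose image lies in $I_2=\Ker B_2$, and, since $I_2$ sits in the radical of $B_2$, the identity $B_2(\varphi x,\varphi y)=B_1(x,y)$ by itself cannot detect this difference. One must therefore exploit the rigidity of the free algebra together with the two standing hypotheses on the forms: $\Ker B\subseteq C^1(\fn_{d,t})$ forces any such perturbation into the commutator, while $C^t(\fn_{d,t})\not\subseteq\Ker B$ keeps the top of the lower central series visible. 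The goal is to conclude that a metric Lie endomorphism of $\fn_{d,t}$ is already determined by its reduction modulo the kernel, so that $F$ restricts to a bijection on each Hom-set. Once this rigidity is in hand, the remaining points (that the quasi-inverse assembled from the presentations of the second paragraph is natural and independent, up to natural isomorphism, of the chosen presentations) are routine bookkeeping, and the equivalence $\mathrm{Sym}_0(d,t)\simeq\mathrm{NilpQuad}_{d,t}$ follows.
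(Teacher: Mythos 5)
The survey offers no proof of this theorem --- it defers entirely to \cite{BCL} --- so the only portion of your argument that can be checked against the text is essential surjectivity, which is precisely the content of the proposition stated just before the theorem; that part, together with your verification that $F$ is well defined and full, is fine. The real issue is the step you yourself single out as the crux and leave open: faithfulness is not proved, and, with the categories as literally defined in the survey, it is in fact \emph{false} for the quotient functor $F$ you propose. Take $B_1=B_2=B$ an object of $\mathrm{Sym}_0(d,t)$ with $I:=\Ker B\neq 0$ (such objects exist, e.g.\ for $(d,t)=(2,5)$, where $\dim \fn_{2,5}=14$ but the quadratic quotients have dimension $7$ or $8$), pick $z_1,\dots,z_d\in I$ not all zero, and let $\varphi'$ be the unique Lie endomorphism of $\fn_{d,t}$ sending the free generator $x_i$ to $x_i+z_i$. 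Since $I$ is an ideal (by invariance of $B$), an induction on brackets gives $\varphi'(w)\equiv w \pmod{I}$ for every $w$; hence $B(\varphi'x,\varphi'y)=B(x,y)$ because $I=\Ker B$, and $\varphi'(I)\subseteq I$. Thus $\varphi'$ is a morphism of $\mathrm{Sym}_0(d,t)$ distinct from the identity with $F\varphi'=F(\mathrm{id})$. The ``rigidity of the free algebra'' you are hoping for does not exist, and exploiting $\Ker B\subseteq C^1(\fn_{d,t})$ cannot restore it: the perturbation above already lands inside $C^1(\fn_{d,t})$.

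To repair the argument you must do one of two things. Either (a) take the Hom-sets of $\mathrm{Sym}_0(d,t)$ to be equivalence classes of metric endomorphisms modulo the congruence $\varphi\sim\varphi'$ iff $(\varphi-\varphi')(\fn_{d,t})\subseteq\Ker B_2$ --- after which your functor is faithful essentially by construction and the rest of your outline goes through --- or (b) prove only the statement that the survey actually uses afterwards, namely a bijection between isomorphism classes of objects. For (b), fullness and essential surjectivity give surjectivity on isomorphism classes, and injectivity follows from the observation that a morphism $B\to B$ inducing the identity on $\fn_{d,t}/\Ker B$ is the identity modulo $C^1(\fn_{d,t})$, hence surjective (the generators generate), hence an automorphism; this substitutes for the missing faithfulness. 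Either way, the paragraph you label as the hardest step is exactly where the mathematical content lies, and as written your proposal does not contain it.
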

This implies that $B_1$ is equivalent to $B_2$ in $\mathrm{Sym}_0(d,t)$ if and only if there exist a metric automorphism
$\theta: (\fn_{d,t},B_1)
\to (\fn_{d,t},B_1)$. So,
the classification of
 quadratic $t$-step nilpotent Lie algebras with
$d$
generators up to isometric isomorphisms
is the classification of objects in the category
$\mathrm{NilpQuad}_{d,t}$ 
up to isomorphism. The group of automorphisms of
$\fn_{d,t}$ acts on the set $Obj(\mathrm{Sym}_0(d,t))$. Moreover the  number of orbits of the action 
is exactly the number of isomorphism types in the classification of
quadratic $t$-step nilpotent Lie algebras of type
$d$ up to isometries.

More examples are shown in the next section.

\section{Particular families}

The aim here is to expose particular families of Lie algebras with ad-invariant metrics. Some of them were obtained by the double extension procedure, the $T^*$-extension process or as twofold extensions. But some results made use of other techniques, which depend on the family and features of ad-invariant metrics. They involved a finer work with Lie theory and constitute the more recent results.

Many examples of Lie algebras with ad-invariant metrics arise when starting the double extension procedure with an abelian Lie algebra $\fd$.

Let $n\in \NN$ and let $(n_1, \hdots, n_t)$ denote a partition of $n$. Consider the partitions satisfying the condition
\begin{equation}\label{(c)}
\mbox{if } i \mbox{ is even then } \# \{j; n_j = i\} \mbox{ is even too. }
\end{equation}

Favre and Santharoubane made use of this condition to determine quadratic nilpotent Lie algebras.

\begin{proposition} \cite{FS}
 The set of isometry classes of regular quadratic Lie
algebras which are extensions of the abelian regular quadratic Lie algebra of
dimension $n$ by nilpotent derivations is in bijection with the set of partitions of
$n$ satisfying Equation (\ref{(c)}).
\end{proposition}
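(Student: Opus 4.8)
The plan is to recognise the Lie algebras in the statement as one-dimensional double extensions of the abelian quadratic Lie algebra and to classify those directly. With $\fh=\RR e$ one-dimensional, $B_\fh=0$, and $\pi(e)=d\in\Dera(\fd,\la\,,\,\ra_\fd)=\fo(n)$, the double extension of $(\fd,\la\,,\,\ra_\fd)$ (with $\fd$ abelian, $\dim\fd=n$) is the Lie algebra $\frg_d:=\RR e\oplus\fd\oplus\RR e^*$ with $e^*$ central, $[e,v]=dv$ and $[v,w]=\la dv,w\ra_\fd\,e^*$ for $v,w\in\fd$, carrying the ad-invariant metric $Q$ of \eqref{Q}. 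Asking that $d$ be a \emph{nilpotent} derivation is exactly what makes $\frg_d$ nilpotent, and conversely every member of the family has this form; so the family is parametrised by the nilpotent elements of $\fo(n)$, and everything reduces to deciding when $\frg_d$ and $\frg_t$ are i-isomorphic.

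I would then show that $\frg_d$ and $\frg_t$ are i-isomorphic if and only if $d$ and $t$ lie in one $O(n)$-orbit, equivalently (over the field in \cite{FS}) have the same Jordan type. The ``if'' direction is immediate from Proposition~\ref{isom}: since $\fd$ is abelian, $\ad(x_0)=0$ for all $x_0\in\fd$, so an orthogonal conjugacy $\varphi^{-1}t\varphi=d$ is exactly the condition there with $\lambda_0=1$ and $x_0=0$, and it produces an isometry $\frg_d\to\frg_t$. The ``only if'' direction is the crux, and the idea is to extract an i-isomorphism invariant of $\frg_d$ that remembers the conjugacy class of $d$: a short computation with the bracket gives $C^k(\frg_d)=\mathrm{im}(d^k)\oplus\RR e^*$ as long as $d^k\neq 0$, and $C^k(\frg_d)=0$ once $d^k=0$, so $\dim C^k(\frg_d)=\mathrm{rank}(d^k)+1$ below the nilpotency index of $d$. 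The dimensions of the lower central series are i-isomorphism invariants, and the sequence $(\mathrm{rank}\,d^k)_k$ determines the Jordan type of the nilpotent map $d$; hence an i-isomorphism $\frg_d\to\frg_t$ forces $d$ and $t$ to have the same Jordan type, and therefore (by the classical description of nilpotent orthogonal orbits recalled just before the statement, namely that the $O(n)$-orbit of a nilpotent is its whole $GL(\fd)$-orbit intersected with $\fo(n)$) to be $O(n)$-conjugate. The case $d=0$, which is the abelian algebra $\RR^{n+2}$, is treated separately.

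Combining the two directions, $\frg_d\mapsto(\text{Jordan type of }d)$ is a well-defined bijection from the set of i-isomorphism classes in the family onto the set of partitions of $n$ that occur as Jordan types of nilpotent elements of $\fo(n)$. By the classical fact this is exactly the set of partitions in which every even part occurs with even multiplicity, i.e.\ the partitions satisfying \eqref{(c)}, and each such partition is realised by some nilpotent $d\in\fo(n)$; the trivial partition $(1,\dots,1)$, for which \eqref{(c)} is vacuous, corresponds to $d=0$. I expect the ``only if'' step to be the main obstacle: one must simultaneously produce a genuine i-isomorphism invariant separating non-conjugate $d$'s — the lower-central-series dimensions serve this purpose — and invoke the nontrivial classical input that, over the relevant field, the Jordan type is a complete invariant of a nilpotent $O(n)$-orbit, which is precisely the combinatorial content of condition \eqref{(c)}.
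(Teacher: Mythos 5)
Your argument is correct and follows the route the survey itself sketches: realize each algebra in the family as the double extension $\frg_d$ of the abelian quadratic $\fd$ by a nilpotent $d\in\Dera(\fd,\la\,,\,\ra_\fd)$, reduce the i-isomorphism question to conjugacy of $d$ under the orthogonal group of $\fd$ (here $\ad(\fd)=0$, so $H^1(\fd)$ is all of the skew-symmetric maps), and invoke the classical description of nilpotent orthogonal orbits by partitions in which every even part has even multiplicity, i.e.\ condition (\ref{(c)}). The one place where you genuinely go beyond what the paper records is the ``only if'' direction: Proposition \ref{isom} only characterizes isometries that preserve the subspace $\fd\oplus\RR^*$, so by itself it does not rule out an i-isomorphism $\frg_d\to\frg_t$ that scrambles this decomposition. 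Your computation $C^k(\frg_d)=\mathrm{im}(d^k)\oplus\RR e^*$ (for $d^k\neq 0$), which recovers the rank sequence of $d$ and hence its Jordan type from the lower central series alone, closes that gap cleanly and in fact shows the classification already holds at the level of abstract isomorphism; this is a more elementary and more self-contained treatment of injectivity than deferring to the isometry criterion of \cite{FS}. The only caveat, which you flag appropriately, is the ground field: over $\RR$ with a definite form there are no nonzero nilpotent skew-symmetric maps, and with indefinite signature a Jordan type can split into several real orthogonal orbits, so the clean bijection with partitions is a statement in the algebraically closed (or suitably normalized) setting of \cite{FS}; your proof is complete in that setting.
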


At the dimensions $3, 4, 5, 6$ and $7$ we have to look for the
partitions of $n = 3,4, 5$ satisfying the conditions in Equation (\ref{(c)}), which are as follows:
$$\begin{array}{rcll}
n & = & 3 & (3) \quad (1, 1, 1)\\
n & = & 4 & (3,1) \quad (2,2) \quad (1, 1, 1, 1)\\
n & = & 5 & (5) \quad (3, 1, 1) \quad (2, 2, 1)
\end{array}
$$
so that the indecomposable nilpotent Lie algebras are: $\fn_{2,3}$ the free 3-step nilpotent Lie algebra in two generators, $\fn_{3,2}$ the free 2-step nilpotent Lie algebra in three generators. The other partitions corresponds to trivial extensions of these Lie algebras. See \cite{FS} and considerations below. 

Equip the abelian Lie algebra  $\fd$  with a metric $\la\,,\,\ra$ of signature $(p,q)$. Let $A$ denote a non-trivial skew-symmetric map $A:\fd \to \fd$. Denote by $A(p,q)$ the resulting Lie algebra after the double extension procedure $\frg=\RR t \oplus \fd \oplus \RR z$, where $[t,x] =Ax$ for all $x\in \fd$ and $[x,y]=\la A x, y\ra z$. It is not hard to see that the center of $\fd$ and the commutator are respectively given by

\smallskip

$\fz(\fd)=\Ker \, A \oplus \RR z, \qquad C(\frg)=Im A \oplus \RR z$.

\smallskip

In particular if $\fj$ is an ideal of $\fd$ we would have 
$\fj=\{a A + x + cz: x\in \fd\cap \fj\}$. Thus $A(\fd \cap \fj)\subseteq \fd\cap \fj$. And it is clear that $\RR z\subseteq \fj$ whenever there exist $x,y\in\fj$ such that $\la Ax,y\ra\neq 0$.


Notice that if $\Ker A\subset \fz(\frg)$, then it  is an ideal of $\frg$. So $\frg$ would be indecomposable if $\frg$ is degenerate.

\begin{proposition} \cite{BK} \label{lor}
 Let $A(p, q)$ be the double extension of an abelian  Lie algebra equipped with a metric of signature (p, q) by
$(\RR, A)$ where $A\in \mathfrak{so}(p,q)$. Then $A(p, q)$ is indecomposable if and only if $\Ker A$ is degenerate. Furthermore $A(p, q)$ is
solvable. 

The corresponding simply connected Lie group with the associated bi-invariant metric is scalar flat and has a 2-step nilpotent Ricci tensor. It is  Ricci-flat if and only if $tr \,A^2=0$, moreover it is flat if and only if $A^2=0$.
\end{proposition}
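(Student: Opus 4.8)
The plan is to verify, in this order, solvability, the indecomposability criterion, and the curvature identities, working throughout with the explicit model $\frg=\RR t\oplus\fd\oplus\RR z$, with $[t,x]=Ax$ and $[x,y]=\la Ax,y\ra z$ for $x,y\in\fd$, $z$ central, and ad-invariant metric $Q$ satisfying $Q|_\fd=\la\,,\,\ra$, $Q(t,z)=1$, $Q(t,t)=Q(z,z)=0$, $\fd\perp\{t,z\}$.

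\emph{Solvability.} Since $\fd$ is abelian and $z$ is central, $[\frg,\frg]=\operatorname{Im}A\oplus\RR z\subseteq\fd\oplus\RR z$, whence $[[\frg,\frg],[\frg,\frg]]\subseteq[\fd,\fd]\subseteq\RR z$ and the next derived term vanishes; so $\frg$ is solvable. It is useful to record at this point that, with respect to the grading $\RR t\prec\fd\prec\RR z$, one has $\ad_t=0\oplus A\oplus 0$ and $\ad_z=0$, while for $x\in\fd$ the endomorphism $\ad_x$ raises the grading by exactly one step, hence is nilpotent with $(\ad_x)^3=0$; and that, as stated in the text, $\fz(\frg)=\Ker A\oplus\RR z$ and $C^1(\frg)=\operatorname{Im}A\oplus\RR z$, with $\operatorname{Im}A=(\Ker A)^\perp$ inside $\fd$ because $A$ is skew for $\la\,,\,\ra$.

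\emph{Indecomposability.} For the ``only if'' direction I would invoke the Remark following Example~\ref{4dim}: if $\frg$ is indecomposable then no nonzero subspace of $\fz(\frg)$ can complement $C^1(\frg)\cap\fz(\frg)$ inside $\fz(\frg)$, so $\fz(\frg)\subseteq C^1(\frg)$, i.e. $\Ker A\subseteq\operatorname{Im}A=(\Ker A)^\perp$; thus $\Ker A$ is isotropic, in particular degenerate (the case $\Ker A=\{0\}$ is settled directly, exactly as for $\mathfrak{osc}$, and is subsumed under the convention that $\{0\}$ counts as degenerate). For the ``if'' direction suppose $\Ker A$ is degenerate and, for contradiction, that $\frg$ is an orthogonal direct sum $\fj_1\oplus\fj_2$ of nonzero non-degenerate ideals. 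Each $\fj_i$ is solvable with an ad-invariant metric, hence has non-trivial centre by Equation~(\ref{e1}), and $\fz(\frg)=\fz(\fj_1)\oplus\fz(\fj_2)$. Writing $z=w_1+w_2$ with $w_i\in\fj_i$ one checks that each $w_i$ is central; combining this with $Q(z,z)=0$, $Q(w_1,w_2)=0$, Lemma~\ref{le1}, and again the Remark following Example~\ref{4dim}, one is forced to conclude that one factor sits inside $\Ker A$ as a non-degenerate subspace, contradicting the degeneracy of $\Ker A$. I expect this ``if'' direction to be the main obstacle: the delicate point is to control how the isotropic central line $\RR z$ and the subspace $\Ker A\cap\operatorname{Im}A$ distribute between $\fj_1$ and $\fj_2$, and this is precisely where the hypothesis ``$\Ker A$ degenerate'' (rather than merely ``$\Ker A\ne\fd$'') enters.

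\emph{Curvature.} On the simply connected group $G$ the bi-invariant metric has $\nabla_XY=\tfrac12[X,Y]$ on left-invariant fields, curvature $R(X,Y)Z=-\tfrac14[[X,Y],Z]$, Ricci form $\mathrm{Ric}(X,Y)=-\tfrac14\,\beta(X,Y)$ with $\beta$ the Killing form, and scalar curvature the $Q$-trace of the Ricci operator. From the matrices above, $(\ad_t)^2=0\oplus A^2\oplus 0$ gives $\beta(t,t)=\operatorname{tr}A^2$, while every product $\ad_X\ad_Y$ with $X$ or $Y$ in $\fd\oplus\RR z$ strictly raises the grading $\RR t\prec\fd\prec\RR z$ and is therefore traceless; hence $\beta(X,Y)=\operatorname{tr}(A^2)\,Q(z,X)\,Q(z,Y)$, since $Q(z,\cdot)$ is exactly the $t$-coordinate functional. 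Consequently the Ricci operator is $\mathrm{ric}(X)=-\tfrac14\operatorname{tr}(A^2)\,Q(z,X)\,z$; because $Q(z,z)=0$ this satisfies $\mathrm{ric}^2=0$, so the Ricci tensor is $2$-step nilpotent, and its $Q$-trace equals $-\tfrac14\operatorname{tr}(A^2)\,Q(z,z)=0$, so $G$ is scalar flat. It follows that $G$ is Ricci-flat exactly when $\operatorname{tr}A^2=0$. Finally $G$ is flat exactly when $[[X,Y],Z]=0$ for all $X,Y,Z$, i.e. when $C^1(\frg)\subseteq\fz(\frg)$, i.e. $\operatorname{Im}A\subseteq\Ker A$, i.e. $A^2=0$. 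What remains is purely routine: the skew-symmetry of $A$, the Jacobi identity behind the bi-invariant curvature formulas, and the elementary trace computations.
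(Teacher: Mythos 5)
Your solvability argument, your curvature computations (the reduction of the Killing form to $\beta=\operatorname{tr}(A^2)\,Q(z,\cdot)\otimes Q(z,\cdot)$ via the grading $\RR t\prec\fd\prec\RR z$, and the consequences $\mathrm{ric}^2=0$, scalar flatness, and the two ``iff''s) are correct and complete. The ``only if'' half of the indecomposability claim is also fine; note, however, that your own argument via the Remark after Example \ref{4dim} delivers the stronger conclusion that $\Ker A$ is \emph{isotropic}, i.e. $\Ker A\subseteq(\Ker A)^{\perp}=\mathrm{Im}\,A$, not merely degenerate. That stronger conclusion is the one you should retain, because it is what the converse actually needs.

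The genuine gap is in the ``if'' direction, and it is not a matter of filling in details: the closing step ``one factor sits inside $\Ker A$ as a non-degenerate subspace, contradicting the degeneracy of $\Ker A$'' is a non sequitur, since a degenerate subspace can contain non-degenerate proper subspaces. Worse, the implication ``$\Ker A$ degenerate $\Rightarrow$ indecomposable'' is false under the paper's own definition of degeneracy. Indeed, if $v\in\Ker A$ is anisotropic ($\la v,v\ra\neq 0$), then $v$ is central (both brackets involving $v$ contain $Av=0$) and $Q(v,v)\neq 0$, so $\RR v$ is a non-degenerate proper central ideal and $A(p,q)$ is decomposable. Thus $A(p,q)$ is decomposable as soon as $\Ker A$ is not totally isotropic, and one can easily arrange $\Ker A$ to be degenerate yet contain an anisotropic vector: for instance on $\fd=\RR^{3,1}$ take $A$ skew-symmetric with $\Ker A=\mathrm{span}\{e_1,e_3+e_4\}$, whose Gram matrix is $\mathrm{diag}(1,0)$. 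The correct equivalence is therefore ``indecomposable if and only if $\Ker A\subseteq\mathrm{Im}\,A$'', and with that hypothesis your contradiction scheme does close, along the following lines: if $\frg=\fj_1\oplus\fj_2$ with both factors non-degenerate ideals, then using $[\fj_1,\fj_2]=0$ together with $[t,u]\in\fj_i$ for $u\in\fj_i$ one shows that either $A=0$ or one factor, say $\fj_2$, lies in $(\RR z)^{\perp}=\fd\oplus\RR z$ with $z\notin\fj_2$; then $[\fd,\fj_2]\subseteq\RR z\cap\fj_2=0$ forces the $\fd$-component of $\fj_2$ into $\Ker A$, and the projection $\fj_2\to\fd$ is an isometry onto a non-degenerate subspace of the totally isotropic $\Ker A$, whence $\fj_2=0$. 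This formulation also handles $\Ker A=\{0\}$ with no ad hoc convention ($\{0\}$ is isotropic but, by the paper's definition, not degenerate), so you should drop the device of declaring $\{0\}$ degenerate.
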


The Lie algebra $ A(p, q)$ is nilpotent if and only if $A$ is nilpotent. In this case $A(p, q)$ gives rise to  an
Einstein space of scalar curvature 0 (non-flat if $A^2\neq 0$). These nilpotent Lie algebras can be classified using conditions in Proposition \ref{isom} (\cite{FS}). Medina in \cite{Me} get the quadratic Lie algebras of index 1.

\begin{theorem} \label{Me} \cite{Me}
 Each indecomposable Lie algebra equipped with a metric of Lorentzian signature $(1, n-1)$ is isomorphic either to $\mathfrak{sl}(2,\RR)$ or to a solvable Lie algebra which is exactly one of the double extensions
$$A_{\lambda}(0, 2m), \qquad \lambda = (\lambda_1, . . .,\lambda_m), \lambda_1 = 1 \leq \lambda_2 \leq \hdots \leq \lambda_m$$
where $n=2m-2$, $\Lambda$ is the skew-symmetric map given by
$$A_{\lambda} = \left( \begin{matrix}
\Lambda_1  &  & 0 \\
& \ddots & \\
0 & & \Lambda_m \end{matrix} \right) \qquad 
\Lambda_j= \left( \begin{matrix}
0 & -\lambda_j\\
\lambda_j & 0 \end{matrix} \right).$$
\end{theorem}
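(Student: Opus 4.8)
\textbf{Proof proposal for Theorem \ref{Me}.}
The plan is to combine the two structure theorems already available with the Lorentzian hypothesis to force the index-$1$ situation into the very rigid form $A_\lambda(0,2m)$. First I would dispose of the simple case: if $\frg$ has a nontrivial Levi factor $\fs$, then $\fs$ carries a nonzero multiple of the Killing form, which is positive- or Lorentzian-type only for $\fsl(2,\RR)$ (the compact $\fso(3)$ is definite, not of index $1$ on a Lie algebra with nontrivial center), and the orthogonal complement $\fs^\perp$ is then a definite metric ideal, hence decomposes; indecomposability forces $\frg=\fsl(2,\RR)$. So from here on assume $\frg$ is solvable and indecomposable of signature $(1,n-1)$.

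Next I would invoke Theorem \ref{FS} (Favre--Santharoubane): since a solvable metric Lie algebra has nontrivial center (Equation (\ref{e1})), and an indecomposable one cannot contain an anisotropic central vector (that would split off a one-dimensional nondegenerate ideal), the center is totally isotropic; in particular $\frg$ is a double extension of an $(n-2)$-dimensional metric Lie algebra $\fd$ by a one-dimensional Lie algebra. Now comes the key reduction: because the ambient metric has index $1$, and the double-extension formula (\ref{Q}) gives $sgn(Q)=sgn(\la\,,\,\ra_\fd)+(1,1)$, the form on $\fd$ must be positive definite, so $\fd$ is a \emph{Euclidean} metric Lie algebra. A Euclidean metric Lie algebra is compact-type, hence reductive, hence (being solvable, as it must be since $\frg$ is solvable) abelian. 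Thus $\fd=\RR^{n-2}$ with its Euclidean metric, and $\frg=A(0,n-2)$ for some nonzero skew-symmetric $A\in\fso(n-2)$, i.e. $\frg=\RR t\oplus\fd\oplus\RR z$ with $[t,x]=Ax$, $[x,y]=\la Ax,y\ra z$. By Proposition \ref{lor}, indecomposability is equivalent to $\Ker A$ being degenerate for the metric on $\fd$; but that metric is definite, so $\Ker A$ degenerate means $\Ker A=0$, i.e. $A$ is invertible, which forces $n-2=2m$ even.

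It remains to normalise $A$ up to isometric isomorphism. Here I would use Proposition \ref{isom}: two such double extensions $A(0,2m)$ and $A'(0,2m)$ are i-isomorphic (respecting the splitting) iff $\varphi^{-1}A'\varphi=\lambda_0 A+\ad(x_0)$ for some $\varphi\in O(2m)$, $\lambda_0\neq 0$, $x_0\in\fd$; but $\fd$ is abelian so $\ad(x_0)=0$, and the condition becomes: $A$ and $A'$ are conjugate under $O(2m)$ up to a nonzero scalar. An invertible skew-symmetric real matrix is $O(2m)$-conjugate to a block-diagonal $\bigoplus_j \Lambda_j$ with $\Lambda_j=\begin{pmatrix}0&-\mu_j\\ \mu_j&0\end{pmatrix}$, $\mu_j>0$; rescaling by $\lambda_0=1/\mu_1$ we may take $\lambda_1=1\le\lambda_2\le\cdots\le\lambda_m$, and these ordered tuples are complete invariants. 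This yields exactly the list $A_\lambda(0,2m)$ in the statement, with $n=2m-2$ (the indexing in the theorem being $\dim\fd=n=2m-2$, so $\dim\frg=2m$). I would also separately remark (citing Proposition \ref{lor} for the curvature dictionary) that these are genuinely pairwise non-isomorphic, closing the classification.

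The main obstacle I anticipate is the step asserting that a Euclidean metric Lie algebra is reductive and hence, when solvable, abelian: one needs that an ad-invariant \emph{definite} form makes every $\ad_x$ skew-symmetric with respect to an inner product, so all $\ad_x$ are semisimple, which for a solvable Lie algebra forces the derived algebra into the center and then (definiteness again, orthogonal to the center) forces it to vanish. This is standard but is the one place where the positivity of the signature on $\fd$ is used essentially rather than formally; everything else is bookkeeping with the structure theorems and the $O(2m)$-orbit classification of skew-symmetric matrices. A secondary point to be careful about is the bookkeeping of dimensions between the theorem's ``$n$'' and ``$2m$'', and confirming that the boundary cases (small $m$, $A^2=0$ versus $A^2\neq 0$) do not produce spurious decomposable examples — but Proposition \ref{lor} handles exactly that.
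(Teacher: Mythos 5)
The survey does not actually prove Theorem \ref{Me}: it is quoted from Medina's paper \cite{Me} with no proof given, so there is nothing in the text to compare against line by line. Your reconstruction follows what is essentially the standard (Medina's) route — reduce to a one-dimensional double extension via the structure theorems, force the base $\fd$ to be definite and hence abelian, and normalise the skew-symmetric map under $O(2m)$-conjugacy up to scale via Proposition \ref{isom} — and the solvable half of your argument, which is the heart of the theorem, is sound.

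Two places need repair. First, the non-solvable case is your weak link: you assert that a nontrivial Levi factor $\fs$ carries a non-degenerate multiple of the Killing form and that $\fs^{\perp}$ is a definite \emph{ideal}, but $\fs$ is only a subalgebra, so neither claim is automatic (in $\ct^*\fsl(2,\RR)$ the Levi factor is totally isotropic). In Lorentzian signature this is fixable: $\fs\cap\fs^{\perp}$ is a totally isotropic ideal of $\fs$, hence of dimension $\leq 1$, hence zero because nonzero semisimple ideals have dimension $\geq 3$; and once $\fs\cong\fsl(2,\RR)$ is non-degenerate of index $1$, its skew-symmetric action on the definite complement $\fs^{\perp}$ lands in a compact orthogonal algebra and is therefore trivial, which is exactly what makes $\fs^{\perp}$ an ideal. (Alternatively, Theorem \ref{MR} disposes of this case at once: a double extension by a simple algebra contributes $(3,3)$ to the signature, incompatible with index $1$.) Second, some bookkeeping: Proposition \ref{lor} cannot be read literally over a definite $\fd$, where no subspace is degenerate in the paper's sense; the operative condition is that $\Ker A$ contain no anisotropic central vector, which on a definite space forces $\Ker A=0$ — you reach the correct conclusion through a literally false intermediate step. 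You should also state the converse (each $A_{\lambda}(0,2m)$ with all $\lambda_j\neq 0$ is indeed indecomposable of index $1$), and note that the printed relation $n=2m-2$ is inconsistent with $\dim A_{\lambda}(0,2m)=2m+2$; your attempted reconciliation ($\dim\frg=2m$) does not work either, and the relation should read $n=2m+2$.
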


Baum and Kath study also the double extension procedure and get results for index 2.

\begin{proposition} \label{BK} \cite{BK} If $\frg$ is an indecomposable Lie algebra obtained as a double extension of quadratic Lie algebra and if $\frg$ has  signature $(2,n-2)$
then it is isomorphic to
\begin{enumerate}
\item $\fb$ the Lie algebra $\fb$ of  the Boidol group defined in Example \ref{4dim}, if n = 4,
\item $\fn_{2,3}$ if n = 5, where $\fn_{2,3}$ denotes the free 3-step nilpotent Lie algebra in two generators;
\item $L_{2,\lambda}(1,n-3)$ for $n>5$ even, or
\item $L_{3,\lambda}(1,n-3)$ for $n>5$ odd,
where
$$L_2 :=\left(\begin{matrix} 0 & 1 \\ 1 & 0
\end{matrix} \right) \qquad  
L_3 :=\left(\begin{matrix} 0 & 1 & 0 \\ 1 & 0 & 1 \\
0 & -1 & 0
\end{matrix} \right)
$$
$$
L_{2,\lambda} := \left( \begin{matrix}
L_2 & 0 \\
0 & A_{\lambda} 
\end{matrix} \right) \qquad  L_{3,\lambda} := \left( \begin{matrix}
L_3 & 0 \\
0 & A_{\lambda} \end{matrix} \right)
$$
where $\lambda = (\lambda_1, \lambda_2, . . .,\lambda_r)$ with $0<\lambda_1 <\hdots \lambda_r$ and $A_{\lambda}$ as in Theorem \ref{Me}. 
\end{enumerate}
\end{proposition}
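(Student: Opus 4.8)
The plan is to bring the abstract hypothesis down to the concrete model $A(1,n-3)$ of Proposition~\ref{lor} and then to run a normal--form classification of the skew--symmetric map. Being a double extension, $\frg$ is not simple, and a double extension by a simple Lie algebra $\fh$ would contribute a block of signature $(\dim\fh,\dim\fh)$ with $\dim\fh\ge 3$ to the metric $Q$ (see (\ref{Q})), which is incompatible with the signature $(2,n-2)$; hence in any double--extension presentation of $\frg$ the top Lie algebra is not simple, and then the Kath--Olbrich observations recalled above give that $\frg'/\fz(\frg)$ is abelian and that $\frg$ is solvable. By the Remark following Example~\ref{4dim} together with Lemma~\ref{le1}, indecomposability forces the centre $\fz(\frg)=C^1(\frg)^{\perp}$ to be isotropic. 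Applying the Medina--Revoy theorem (Theorem~\ref{MR}) — the simple alternative being excluded by the signature count — $\frg$ is a double extension of an $(n-2)$--dimensional metric Lie algebra $(\fd,\la\,,\,\ra_{\fd})$ by a one--dimensional Lie algebra; since a definite $\fd$ would make $\frg$ Lorentzian and a $\fd$ of index $\ge 2$ would make $\frg$ of index $\ge 3$, the base $\fd$ has signature $(1,n-3)$. Moreover $\fd$ is solvable, being a quotient of the ideal $\mathcal G(\fd)$ of $\frg$, so by Medina's Theorem~\ref{Me} it is an orthogonal sum of abelian summands and copies of the algebras $A_{\lambda}(0,2m)$; as each such $A_{\lambda}(0,2m)$ is itself the double extension of an abelian Lie algebra by $\RR$, composing and rearranging these extensions (a bookkeeping with (\ref{bra})) presents $\frg$ as $A(1,n-3)$ for some nonzero skew--symmetric $A\in\mathfrak{so}(1,n-3)$.

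It remains to classify $A$. By Proposition~\ref{isom} two such $A$ give isomorphic $\frg$ precisely when they are conjugate under $O(1,n-3)$ up to a nonzero scalar, and by Proposition~\ref{lor} (and its refinement in \cite{BK}) indecomposability of $A(1,n-3)$ is governed by the way $A$ sits in $\mathfrak{so}(1,n-3)$; in particular $\Ker A$ must be degenerate. Decompose the pseudo--Euclidean space $\RR^{1,n-3}$ into $A$--invariant, non--degenerate, indecomposable blocks: on a definite plane $A$ acts as a rotation $\Lambda_j$ with speed $\lambda_j>0$; on a neutral plane the only skew maps are the hyperbolic ones with real eigenvalues $\pm\mu$, i.e.\ $L_2$ after scaling; and nilpotent blocks occur only on indefinite subspaces, the smallest being the size--$3$ block $L_3$ on a $(1,2)$--subspace. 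Because the signature $(1,n-3)$ has a single positive direction, exactly one block involves it; a one--dimensional block, or any block for which $\Ker A$ acquires a non--degenerate part, produces a non--degenerate proper ideal and is excluded, so the distinguished block is either $L_2$ — leaving a definite complement, hence $n$ even and $A$ conjugate up to scale to $L_{2,\lambda}$ — or $L_3$ — leaving a definite complement, hence $n$ odd and $A$ conjugate to $L_{3,\lambda}$. Normalising the leading speed and ordering the $\lambda_j$ with $0<\lambda_1<\cdots<\lambda_r$ yields the stated list: the degenerate cases $n=4$ (empty $\lambda$, $A=L_2$, so $\frg=\fb$ by Example~\ref{4dim}) and $n=5$ (empty $\lambda$, $A=L_3$, so $\frg=\fn_{2,3}$) give (1) and (2), and the non--empty $\lambda$ give (3) and (4).

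I expect the genuine difficulties to lie in the last part of the reduction — verifying that the iterated double extension of the abelian summands of $\fd$ can indeed be organised as a single $A(1,n-3)$ — and in the normal--form step, that is, in deciding exactly which nilpotent and semisimple Jordan--type blocks of an element of $\mathfrak{so}(1,n-3)$ keep $A(p,q)$ indecomposable and of index exactly $2$. It is precisely this block analysis, carried out in detail in \cite{BK}, that forces the dichotomy $L_2$ versus $L_3$ and the corresponding parity of $n$.
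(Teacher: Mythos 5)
The paper offers no proof of this proposition; it is quoted from Baum--Kath \cite{BK}. Measured against the intended content, your argument splits into a reduction (from ``some double extension of signature $(2,n-2)$'' down to $A(1,n-3)$) and a normal-form classification of $A\in\mathfrak{so}(1,n-3)$. The second half is essentially the right argument and is what \cite{BK} actually does: indecomposability forces $\Ker A$ to be totally isotropic, hence of dimension $0$ or $1$ in Lorentzian signature, which produces exactly the dichotomy $L_{2,\lambda}$ (invertible $A$, $n$ even) versus $L_{3,\lambda}$ (one-dimensional isotropic kernel, $n$ odd), with Proposition~\ref{isom} (trivial inner part, since $\fd$ is abelian) reducing the classification to $O(1,n-3)$-conjugacy up to a nonzero scalar.

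The first half, however, contains a genuine error, not merely a hard step. The claim that ``composing and rearranging these extensions \dots presents $\frg$ as $A(1,n-3)$'' is false: an iterated double extension (abelian $\to$ oscillator $A_{\lambda^1}(0,2m)$ $\to$ $\frg$) does not in general collapse to a single double extension of an abelian Lorentzian space. The Kath--Olbrich theorem quoted immediately after this proposition in the survey exhibits the obstruction explicitly: the algebras $\mathfrak{osc}(\lambda^1,\lambda^2)$ (with the stated genericity condition) are indecomposable, of signature $(2,q)$, and are double extensions in the sense of Medina--Revoy, yet they do not appear in the list of the present proposition --- which is precisely why the survey says the Kath--Olbrich result ``improves'' this one by treating ``all cases''. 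So the argument you give would prove a false statement. The fix is to read the hypothesis as it is meant (and as the surrounding text indicates): $\frg$ is obtained as a double extension of an \emph{abelian} quadratic Lie algebra by $\RR$, i.e.\ $\frg=A(p,q)$ with $(p+1,q+1)=(2,n-2)$, so that $\fd=\RR^{1,n-3}$ from the outset and no reduction is needed; the entire proof is then your block analysis of $A$. If you do want to keep a reduction step, you must either add hypotheses that exclude the two-dimensional-centre cases or confront them directly, since Medina's Theorem~\ref{Me} applied to the Lorentzian base $\fd$ genuinely allows oscillator summands whose further one-dimensional extension leaves the class of algebras $A(1,n-3)$.
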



Baum and Kath in \cite{BK} get the indecomposable Lie algebras of dimension $\leq 6$ admitting an ad-invariant metric. The list up to dimension four appears in  Example \ref{4dim}. To get the higher dimensional examples, one  starts 
with the abelian Lie algebra equipped with a fixed metric and one gets the following Lie algebras. 

\begin{enumerate}[(i)]
\item In dimension = 5:
$\fn_{2,3}$ the free 3-step nilpotent Lie algebra in two generators.
\item In dimension = 6: \label{list6}
 \begin{itemize}
\item $\fn_{3,2}$ the free 2-step nilpotent Lie algebra in three generators;
\item $\mathfrak{osc}(\lambda)=span\{e_0, e_1, e_2, e_3, e_4, e_5\}$ where the non-trivial Lie brackets are
$$[e_0, e_1]=e_2, \, [e_0, e_2]=-e_1,\, [e_0, e_4]=\lambda e_5,\, [e_0, e_5]= -\lambda e_4, \, [e_1, e_2]=[e_3, e_4]=e_5$$
\item $L_{2,\lambda}(1,3)$ is a solvable Lie algebra with signature (2,4)
\item $N_k(2,2)$ is a double extension of $\RR^4$ with neutral metric via one of the matrices $N_i$, $i=2,\hdots, 6$ in \cite{BK}.
\end{itemize}
\end{enumerate}

Some of the Lie algebras above can be obtained also as $T^*$-extensions. Recall  the  classification of three dimensional Lie algebras as given e.g. in \cite{Mi}.

\begin{lemma}\label{lie3} Let $\fh$ be a real  Lie algebra of
dimension three spanned by $e_1,e_2,e_3$.
Then it is isomorphic to one in the following list:
\begin{equation}
\begin{array}{rll}
\fh_3 & [e_1,e_2]=e_3 \\
\fr_{3} & [e_1,e_2]=e_2,\, [e_1,e_3]= e_2 + e_3 \\
\fr_{3,\lambda} & [e_1,e_2]=e_2,\, [e_1,e_3]= \lambda e_3 &  |\lambda| \leq 1 \\
\fr_{3,\eta}' & [e_1,e_2]=\eta e_2- e_3,\, [e_1,e_3]= e_2 + \eta e_3 & \eta \geq 0\\
\mathfrak{sl}(2) & [e_1,e_2]=e_3,\, [e_3,e_1]= 2e_1,\, [e_3,e_2]= -2e_2\\
\mathfrak{so}(3) & [e_1,e_2]=e_3,\, [e_3,e_1]= e_2,\, [e_3,e_2]= -e_1
\end{array}
\end{equation}
\end{lemma}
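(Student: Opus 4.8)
The plan is to run the classical reduction according to the dimension of the derived ideal $C^1(\fh)=[\fh,\fh]$, which for a non-abelian three-dimensional $\fh$ equals $1$, $2$ or $3$ (the abelian algebra $\RR^3$ being the remaining, trivial case). Each of the three values will be shown to produce exactly the corresponding algebras in the list, and at the end one checks that all the entries are pairwise non-isomorphic.

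First I would handle $\dim C^1(\fh)=1$, writing $C^1(\fh)=\RR e$. If $e$ lies in the centre $\fz(\fh)$, then $\fh/\RR e$ is two-dimensional abelian, so $\fh$ is a central extension of $\RR^2$ by $\RR$ given by a non-zero alternating form, which forces $\fh\cong\fh_3$. If $e\notin\fz(\fh)$ one can choose $e_1$ with $[e_1,e]=e$ and then correct a third basis vector by a combination of $e_1$ and $e$ to make it central; renaming $e$ as $e_2$, this is precisely the bracket of $\fr_{3,0}$ (that is, $\fr_{3,\lambda}$ at $\lambda=0$). For $\dim C^1(\fh)=2$ I would first rule out that $C^1(\fh)$ is the non-abelian two-dimensional Lie algebra: every derivation of that algebra has image inside its one-dimensional derived ideal, and since $\ad(e_1)|_{C^1(\fh)}$ is such a derivation for any $e_1\notin C^1(\fh)$, one would obtain $[\fh,\fh]=[C^1(\fh),C^1(\fh)]$ of dimension $1$, a contradiction. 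Hence $C^1(\fh)\cong\RR^2$ and $\fh=\RR e_1\ltimes_D\RR^2$ with $D=\ad(e_1)|_{C^1(\fh)}$, and $\dim C^1(\fh)=2$ forces $D$ invertible. Comparing brackets shows that two such algebras are isomorphic exactly when the corresponding matrices $D$ are conjugate in $\mathrm{GL}(2,\RR)$ up to a non-zero scalar, so the classification here is that of real Jordan forms of invertible $2\times 2$ matrices modulo scaling: a real diagonalizable $D$ gives $\fr_{3,\lambda}$ with $0<|\lambda|\le 1$ (after identifying $\lambda$ with $1/\lambda$), a single non-trivial Jordan block gives $\fr_3$, and a pair of non-real eigenvalues gives $\fr_{3,\eta}'$ with $\eta\ge 0$ (after identifying $\eta$ with $-\eta$); together with $\fr_{3,0}$ this accounts for the whole solvable part of the list with the stated parameter ranges.

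Finally, for $\dim C^1(\fh)=3$ the algebra is perfect, hence semisimple (its radical cannot be $1$- or $2$-dimensional, as there is no semisimple quotient of those dimensions, and it cannot be all of $\fh$ since solvable algebras are never perfect), so the classical list of three-dimensional real simple Lie algebras leaves only $\mathfrak{sl}(2,\RR)$ and $\mathfrak{so}(3)$, distinguished by the signature of the Killing form. Pairwise non-isomorphism then follows because $\dim C^1(\fh)$ separates the coarse classes and, inside the $\fr$-family, the conjugacy-plus-scaling class of $\ad(e_1)$ on the derived ideal is a complete invariant. The step I expect to be the main obstacle is the linear-algebra bookkeeping in the case $\dim C^1(\fh)=2$: describing the $\mathrm{GL}(2,\RR)$-orbits of invertible $2\times 2$ matrices modulo scalars and, in particular, pinning down the residual identifications $\lambda\sim 1/\lambda$ and $\eta\sim-\eta$ that yield the normalisations $|\lambda|\le 1$ and $\eta\ge 0$.
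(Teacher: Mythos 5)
Your argument is correct. Note that the paper does not actually prove this lemma: it is stated as a recollection of the well-known classification of three-dimensional real Lie algebras, with a citation to Milnor, so there is no in-paper proof to compare against. What you give is the standard case analysis on $\dim C^1(\fh)$, and all the steps check out: the derivation argument ruling out a non-abelian two-dimensional derived ideal is sound (every derivation of $\mathrm{aff}(\RR)$ does land in its one-dimensional derived ideal), the reduction of the $\dim C^1(\fh)=2$ case to $\mathrm{GL}(2,\RR)$-conjugacy of the invertible matrix $\ad(e_1)|_{C^1(\fh)}$ modulo non-zero scalars is the right invariant (legitimate because $C^1(\fh)$ is characteristic), and the residual identifications $\lambda\sim 1/\lambda$ and $\eta\sim-\eta$ do produce exactly the normalisations $|\lambda|\le 1$ and $\eta\ge 0$ in the statement. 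The only point worth making explicit when writing this up is the non-isomorphism of $\fh_3$ and $\fr_{3,0}$, which both have one-dimensional derived ideal; they are separated by whether $C^1(\fh)$ is central (equivalently, by nilpotency), which is implicit in your split of that case.
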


 Let $\ct^* \fh$ denote the cotangent Lie
algebra of a Lie algebra of dimension three as above, spanned by $e_1, e_2, e_3, e_4, e_5, e_6$.  The nonzero
Lie brackets  are listed below:
\begin{equation}  \label{cot3}
\begin{array}{ll}
\ct^*\fh_3: & [e_1,e_2]=e_3, \, [e_1, e_6]=-e_5, [e_2, e_6]=e_4\\
\ct^*\fr_{3}: & [e_1,e_2]=e_2,\, [e_1,e_3]= e_2 + e_3,\, \\
& [e_1, e_5]= -e_5 -e_6,\, [e_1, e_6]=-e_6,\, [e_2, e_5]=e_4,\,
[e_3, e_5]=e_4,\, [e_3,e_6]=e_4 \\
\ct^*\fr_{3,\lambda}: & [e_1,e_2]=e_2,\, [e_1,e_3]= \lambda e_3  \\
  |\lambda| \leq 1& [e_1, e_5]= -e_5,\, [e_1, e_6]=-\lambda e_6,\, [e_2, e_5]=e_4,
\,  [e_3,e_6]=\lambda e_4\\
 \ct^* \fr_{3,\eta}': & [e_1,e_2]=\eta e_2- e_3,\, [e_1,e_3]= e_2 + \eta e_3,
 [e_1, e_5]= -\eta e_5-e_6,\,  \\
 \eta \geq 0 &  [e_1, e_6]= e_5-\eta e_6,\, [e_2,e_5]=\eta e_4,\, [e_2,e_6]= -e_4,
 \, [e_3, e_5]= e_4,\, [e_3,e_6]=\eta e_4\\
\end{array}
\end{equation}

\begin{remark} Notice that these cotangent Lie algebras have ad-invariant metrics of signature (3,3). Obtained by the double extension process one should start with a four dimensional Lie algebra. It is not hard to see that both the oscillator Lie algebra and the other one for the Boidol group do not admit exterior skew-symmetric derivations. In the list above the Lie algebra $\ct^*\fh_3$ corresponds to the  free 2-step nilpotent Lie algebra in three generator $\fn_{3,2}$. The cotangent Lie algebras correspond to the Lie algebras obtained by the double extension procedure, denoted  $N_k(2,2)$ in the list of Baum and Kath (\ref{list6}).
\end{remark}

Recall that 
Duong gave all $T^*$-extension of three-dimensional Lie algebras over $\CC$. Except for the abelian situation, the other possibilities correspond to the trivial cocycle  \cite{Du}.

As an application of the twofold extension Kath and Olbrich found quadratic Lie algebras whose respective metrics have index two. 

For this let us denote by 
\begin{itemize}

\item $\mathfrak{osc}(\lambda^1, \lambda^2)$ the Lie algebra which is obtained as the double extension of the oscillator Lie algebra $A_{\lambda^1}(0,2m)=\RR Z_1 \oplus \RR^{2m} \oplus \RR L_1$ by $\RR$ with the map $A_{\lambda^2}$ which acts trivially on $span\{Z_1, L_1\}$ and it acts as $A_{\lambda^2}$ in Theorem \ref{Me} on the subspace $\RR^{2m}\subset \mathfrak{osc}(\lambda^1)$. 

\item $\fd(\lambda^1, \lambda_2)$ the Lie algebra which is $\RR A_0 \oplus \fd$ where $\delta$ is the vector space underlying $\mathfrak{osc}(\lambda^1, \lambda^2)$ where the Lie brackets on $\fd$ are the the same as in $\mathfrak{osc}(\lambda^1, \lambda^2)$ except that $[L_1, L_2]=A_0$, and $[A_0, L_1]=Z_2, \, [A_0, L_2]=-Z_1$.

\end{itemize}

The following result actually improves Proposition \ref{BK}. 
All cases are considered and   the structure of the Lie algebras of index $(2,q)$ is completely described. 

\begin{theorem} \cite{KO} Let $(\frg, \la \cdot , \cdot \ra)$ be an indecomposable metric Lie algebra of signature
$(2, q)$. If $\frg$ is simple, then it is is isomorphic to $\mathfrak{sl}(2,\RR)$ and $\la \cdot, \cdot\ra$ is a multiple of the
Killing form. If $\frg$ is not simple, then the centre $\fz(\frg)$ of $\frg$ is one- or two-dimensional
and we are in one of the following cases.
\begin{enumerate}
\item If $\dim \fz(\frg) = 1$, then $q$ is even and $(\frg, \la \cdot, \cdot \ra)$ is isomorphic to $A_{L_2}(1,1)$ if $q = 2$, or to exactly one of the spaces $A_{L_2, \lambda}(1, q-1)$  if $q \geq 4$, where
$$A_{L_2} = \left( \begin{matrix} 0 &  1 \\ 1 &  0
\end{matrix} \right) \qquad \mbox{ and } \quad A_{L_2, \lambda}= \left( \begin{matrix}
L_2 & 0\\
0 & A_{\lambda}
\end{matrix}
\right)
$$
for $\lambda$ as in Theorem \ref{Me}. 
\item If $\dim \fz(\frg) = 2$ and $\dim \frg$ is even, then $q = 2m + 2$ with $m\geq 3$ and $(\frg, \la \cdot , \cdot \ra)$
is isomorphic to $\mathfrak{osc}(\lambda^1, \lambda^2)$- There is no $j$ such that $\lambda^1_j=\lambda^2_j=0$ and  
 the set $\{(\lambda^1_i, \lambda^2_i)\}$  is not
contained in the union of two 1-dimensional subspaces.
\item If $\dim \fz(\frg) = 2$ and $\dim \frg$ is odd, then $q = 2m+3$ for $m\geq 0$ and $(\frg, \la \cdot ,  \cdot )$ is isomorphic
to $\fd(\lambda^1, \lambda^2)$ as above and there is no index $j \in \{1, \hdots  , m\}$
such that $\lambda^1_j=\lambda^2_j=0$.
\end{enumerate}
\end{theorem}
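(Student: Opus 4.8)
The plan is to dispatch the simple case by hand and to reduce everything else to the twofold-extension normal form, reading the classification off its invariants. For the simple case, the space of invariant symmetric bilinear forms on $\frg$ is one-dimensional, spanned by the Killing form $\kappa$, so $\la\cdot,\cdot\ra=c\,\kappa$ with $c\neq0$; writing a Cartan decomposition $\frg=\fk\oplus\fp$, $\kappa$ is negative definite on $\fk$ and positive definite on $\fp$, hence $c\kappa$ has signature $(\dim\fp,\dim\fk)$ or $(\dim\fk,\dim\fp)$, and demanding a $2$ in one of the two slots forces $\min(\dim\fk,\dim\fp)\le2$. Going through the real simple Lie algebras, this occurs only for $\mathfrak{sl}(2,\RR)$, for which $SL(2,\RR)/SO(2)$ is the hyperbolic plane ($\dim\fp=2$, $\dim\fk=1$); so $\la\cdot,\cdot\ra$ is then a positive multiple of $\kappa$, which is the first case.

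Now assume $\frg$ is indecomposable and not simple. By the theorem of Kath and Olbrich recalled just before this statement, $\frg$ is solvable with $\frg'/\fz(\frg)$ abelian, so by their structure proposition $\frg\cong\fd_{\alpha,\gamma}(\fa,\fl,\rho)$ with $\fa=\frg'/\fz(\frg)$ and $\fl=\frg/\frg'$. I would first locate the centre: Lemma~\ref{le1}(ii) with $r=1$ gives $\fz(\frg)=(\frg')^{\perp}$, and indecomposability together with the Remark after Example~\ref{4dim} gives $\fz(\frg)\subseteq\frg'$, so $\fz(\frg)$ is isotropic; hence $\dim\fz(\frg)\le2$, while $\dim\fz(\frg)\ge1$ by Equation~(\ref{e1}). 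Since $\fl^{*}\subseteq\fz(\frg)$ and $\dim\fl=\dim\fl^{*}=\dim(\frg/\frg')\ge1$, in fact $\fl^{*}=\fz(\frg)$ and $\dim\fl=\dim\fz(\frg)\in\{1,2\}$; in particular $\gamma\in\Lambda^{3}\fl^{*}=0$, and as the twofold extension has signature $\mathrm{sgn}(\la\cdot,\cdot\ra_{\fa})+(\dim\fl,\dim\fl)$ the induced metric on $\fa$ is Lorentzian of signature $(1,q-1)$ if $\dim\fl=1$ and definite if $\dim\fl=2$. When $\dim\fl=1$ one also has $\alpha\in Z^{2}(\fl,\fa)=0$, so $\frg$ is just the one-dimensional double extension $A(1,q-1)$ of the abelian Lorentzian algebra $\fa$ by the skew map $A:=\rho(L)\in\mathfrak{so}(1,q-1)$; by Proposition~\ref{isom} (the inner term absent, $\fa$ being abelian) these are classified up to i-isomorphism by the $O(1,q-1)$-conjugacy class of $A$ modulo a nonzero scalar, and by Proposition~\ref{lor} indecomposability means that $\ker A$ — which is automatically central in $\frg$ — contains no nondegenerate line. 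Together with the real normal form of a skew map in Lorentzian signature (one boost block on a Lorentzian plane plus an elliptic part with nonzero rotation speeds), this forces $q$ even and gives $A_{L_2}(1,1)$ for $q=2$ and $A_{L_2,\lambda}(1,q-1)$ for $q\ge4$, with $\lambda$ the ordered tuple of speeds (ordering removing the ambiguity left by Proposition~\ref{isom}).

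When $\dim\fl=2$ we have $\fl\cong\RR^{2}$ abelian, $\fa$ definite, and $\rho$ given by two commuting skew maps $A_{1}=\rho(L_{1})$, $A_{2}=\rho(L_{2})$ on $\fa$; simultaneously block-diagonalising them records $\rho$ by the list of pairs $(\lambda^{1}_{i},\lambda^{2}_{i})$ together with the common kernel $\fk_{0}:=\ker A_{1}\cap\ker A_{2}$. One checks that Equation~(\ref{alfa}) is automatic when $\dim\fl=2$, and that the extension-equivalence proposition allows one to add to $\alpha$ an arbitrary coboundary $d\tau$, whose values exhaust $\mathrm{Im}\,A_{1}+\mathrm{Im}\,A_{2}=(\ker A_1)^{\perp}+(\ker A_2)^{\perp}=\fk_{0}^{\perp}$ (the accompanying change of $\gamma$ vanishing since $\Lambda^{3}\fl^{*}=0$); hence $A_{0}:=\alpha(L_{1},L_{2})$ may be normalised to lie in $\fk_{0}$. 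A short computation shows that any $w\in\fk_{0}$ with $w\perp A_{0}$ is central in $\frg$, hence spans a nondegenerate central ideal, so indecomposability forces $\fk_{0}=\{0\}$ if $A_{0}=0$ and $\fk_{0}=\RR A_{0}$ if $A_{0}\neq0$. In the first case $\dim\fa$, and so $\dim\frg=\dim\fa+4$, is even, $q=2m+2$, and one identifies $\frg$ with $\mathfrak{osc}(\lambda^{1},\lambda^{2})$; in the second $\dim\fa$, and so $\dim\frg$, is odd, $q=2m+3$, and one identifies $\frg$, after rescaling $A_{0}$, with $\fd(\lambda^{1},\lambda^{2})$. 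The remaining conditions — no index $j$ with $\lambda^{1}_{j}=\lambda^{2}_{j}=0$ (automatic once $\fk_{0}$ is pinned down) and, in the first case, $\{(\lambda^{1}_{i},\lambda^{2}_{i})\}$ not lying in a union of two lines (forcing $m\ge3$) — are precisely those excluding a nondegenerate proper ideal coming from a $\rho$-invariant splitting of $\fa$.

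The step I expect to be the real obstacle is the last one in each case: turning ``$\frg$ has no nondegenerate proper ideal'' into the explicit genericity conditions on $(\rho,\alpha)$, which requires understanding the lattice of ideals of $\fd_{\alpha,\gamma}(\fa,\fl,\rho)$ and how $\perp$ interacts with the block decomposition of $\rho$, and proving both that each listed algebra is indecomposable and that the conditions are necessary. This is also where the value $2$ of the index is genuinely used: the two isotropic central directions get consumed in different ways depending on whether the normalised cocycle $A_{0}$ vanishes. The remaining ingredients — the real normal forms of (commuting) skew maps, Proposition~\ref{isom} for uniqueness within a fixed $(\fa,\fl,\rho)$, and identifying the abstract twofold extensions with the explicit presentations $A_{L_2,\lambda}$, $\mathfrak{osc}(\lambda^{1},\lambda^{2})$, $\fd(\lambda^{1},\lambda^{2})$ — are routine once this is in place.
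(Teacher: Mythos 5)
Your proposal follows exactly the route the paper itself indicates for this theorem — it is stated as an application of the Kath--Olbrich twofold-extension machinery and the survey defers the proof entirely to \cite{KO} — and your reductions (centre $=\fl^*$ isotropic of dimension $\le 2$, $\gamma=0$, $\alpha=0$ when $\dim\fl=1$, normalisation of $\alpha(L_1,L_2)$ into the common kernel $\fk_0$ when $\dim\fl=2$, and the translation of indecomposability into the genericity conditions) are sound, with the genuinely technical step correctly identified. The only slip is the claim that invariant symmetric forms on a real simple Lie algebra are always multiples of the Killing form: for algebras of complex type such as $\mathfrak{sl}(2,\CC)_{\RR}$ this space is two-dimensional, but since all such forms there have index $\ge 3$ the conclusion of the simple case is unaffected.
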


\smallskip

Following the notion of quadratic extension Kath and Olbrich get the metric Lie algebras of index three. Firstly they prove that if $(\frg,\la\,,\,\ra)$ is a non-simple indecomposable metric Lie algebra of
index 3, then $\frg/j$ 
is isomorphic to one of the Lie algebras $\fr'_{3,0}$, $\fr_{3,-1}$, $\fh_3$, $\mathfrak{sl}(2,\RR)$, $\mathfrak{su}(2)$ or $\RR^k$ for
$k= 1, 2, 3$. Here $\fr_{3,0}'$ denotes the Lie algebra of the Lie group of motions of the euclidean space $\RR^2$ with the canonical metric and $\fr_{3,-1}$ denotes the corresponding Lie algebra if $\RR^{2}$  is endowed with the neutral metric. To get all metric Lie algebras of index three Kath and Olbrich study the orthogonal $\fl$-modules $\fa$. 

\begin{theorem}\cite{KOl}
If $(\frg, \la\,,\,\ra)$ is a simple metric Lie algebra of index 3, then $\frg$  is isomorphic to $\mathfrak{su}(2)$ or $\mathfrak{sl}(3,\RR)$ and $\la\,,\,\ra$ 
is a positive multiple of the Killing form or it is
isomorphic to $\mathfrak{sl}(2,\CC)$ and
$\la\,,\,\ra$ is a non-zero multiple of the Killing form.

If $(\frg, \la\,,\,\ra)$ is a non-simple indecomposable metric Lie algebra of index 3, then $(\frg, \la\,,\,\ra)$ is isomorphic to exactly one Lie algebra $\fd_{\alpha,\gamma}(\fl, \fa, \rho)$ with the data exposed in  Section 7 in \cite{KOl}. See the table at the end of the section. 
\end{theorem}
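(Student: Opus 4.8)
The plan is to treat the simple and the non-simple indecomposable cases with different tools, as the statement itself suggests. Suppose first that the metric Lie algebra $(\frg,\la\,,\,\ra)$ has a simple ideal $\fs$. Since $\fs$ is semisimple all its derivations are inner, so the adjoint action of $\frg$ on $\fs$ gives $\frg=\fs+C_{\frg}(\fs)$; as $\fs\cap C_{\frg}(\fs)=\fz(\fs)=0$ this is a direct sum of ideals, and ad-invariance together with $\fs=[\fs,\fs]$ forces $C_{\frg}(\fs)=\fs^{\perp}$. Hence $\fs$ is non-degenerate and, by indecomposability, $\frg=\fs$ is simple. Thus an indecomposable $\frg$ is either simple, or has no simple ideal, in which case the quadratic extension theory of \cite{KOl} recalled above applies: $\frg$ carries its canonical isotropic ideal $\fri$ with $\fri^{\perp}/\fri$ abelian and $\frg\cong\fd_{\alpha,\gamma}(\fl,\fa,\rho)$ for $\fl=\frg/\fri^{\perp}$, $\fa=\fri^{\perp}/\fri$, an orthogonal action $\rho$, a cocycle $\alpha\in Z^2_C(\fl,\fa)$ and a form $\gamma\in\Lambda^3\fl^*$.

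For the simple case I would invoke the classification of real simple Lie algebras. The space of ad-invariant symmetric bilinear forms on a real simple Lie algebra is one-dimensional, spanned by the Killing form $B$, unless $\frg$ is the realification of a complex simple Lie algebra, in which case it is two-dimensional and, since the realification of any non-degenerate complex symmetric form on an $n$-dimensional complex space has signature $(n,n)$, all of its invariant metrics have index equal to the complex dimension. For an absolutely simple $\frg$ with Cartan decomposition $\frg=\fk\oplus\fp$, the form $\lambda B$ has $\dim\fk$ negative eigenvalues for $\lambda>0$ and $\dim\fp$ for $\lambda<0$. So one runs through Cartan's list, reads off $(\dim\fk,\dim\fp)$, and keeps those admitting an invariant metric of index $3$: this leaves $\mathfrak{su}(2)$ and $\mathfrak{sl}(3,\RR)$, for which a positive multiple of $B$ has index $3$ (a negative multiple would have index $0$ and $5$ respectively), and $\mathfrak{sl}(2,\CC)$ regarded over $\RR$ (every invariant metric of which has signature $(3,3)$); this is exactly the list in the statement.

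For the non-simple case the work is an enumeration organised by the quadratic extension data. Since $\fri\cong\fl^{*}$ is isotropic and the metric has exactly three negative eigenvalues, $\dim\fl\le 3$ and the induced metric on $\fa=\fri^{\perp}/\fri$ has exactly $3-\dim\fl$ negative directions. Using the structure of orthogonal $\fl$-modules of such small index together with the cocycle condition (\ref{alfa}), one first shows that $\frg/\fri$ is isomorphic to one of $\fr'_{3,0}$, $\fr_{3,-1}$, $\fh_3$, $\mathfrak{sl}(2,\RR)$, $\mathfrak{su}(2)$, or $\RR^{k}$ with $k\le 3$. Then, for each of these, I would run over the orthogonal $\fl$-modules $\fa$ with exactly $3-\dim\fl$ negative directions, compute the relevant cohomology (the spaces $Z^2_C(\fl,\fa)$ and $C^1(\fl,\fa)$, and the groups $H^1(\fl,\RR)$ and $H^2(\fl,\RR)$ that enter $\mathcal H^2_Q(\fl,\fa)$), pass to the equivalence classes $[\alpha,\gamma]\in\mathcal H^2_Q(\fl,\fa)$, and reduce these to normal forms under the induced action of the automorphism group $G_{\fl,\fa}$ of the pair $(\fl,\fa)$; discarding the decomposable cases produces the table of Section~7 of \cite{KOl}. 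The main obstacle is exactly this last step: one must carry out the cohomological bookkeeping case by case, check that each surviving normal form is indecomposable of index exactly $3$, and check that distinct normal forms give non-isomorphic metric Lie algebras, which is what yields the ``exactly one'' in the statement. By comparison the simple case is a direct table look-up.
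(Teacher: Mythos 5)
The survey offers no proof of this theorem beyond the citation to \cite{KOl}, and your sketch follows precisely the strategy the surrounding text attributes to Kath and Olbrich: split off the simple case by showing a simple ideal is non-degenerate, settle simple $\frg$ by the signatures of invariant forms on real simple Lie algebras (one-dimensional space of such forms except for realifications of complex simple algebras), and settle the non-simple indecomposable case via the canonical isotropic ideal, the cohomology set $\mathcal H^2_Q(\fl,\fa)$ and the action of $G_{\fl,\fa}$. The only slip is notational: the quotient that must be one of $\fr'_{3,0}$, $\fr_{3,-1}$, $\fh_3$, $\mathfrak{sl}(2,\RR)$, $\mathfrak{su}(2)$, $\RR^k$ is $\fl=\frg/\fri^{\perp}$ (consistent with your own bound $\dim\fl\le 3$), not $\frg/\fri$.
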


\smallskip

The question of determining if a given Lie algebra admits an ad-invariant metric is still open in the more general situation. Most features exposed here can be used in general for the negative answer. However there were some attempts to cover this question in particular infinite families.  

\smallskip

 del Barco and this author determined which free nilpotent Lie algebras admit such a metric. The answer was completely given in \cite{BO}. 

\begin{theorem} \label{t1}  Let $\fn_{m,k}$ be the free k-step nilpotent Lie algebra on $m$ generators. Then $\fn_{m,k}$ admits an ad-invariant
metric if and only if $(m,k) = (3, 2)$ or $(m,k) = (2, 3)$.
\end{theorem}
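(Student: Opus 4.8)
The plan is to prove both directions separately. The ``if'' direction is immediate from examples already discussed: $\fn_{2,3}$ and $\fn_{3,2}$ appear in Example \ref{4dim} and in the lists of Baum--Kath (and $\fn_{3,2}=\ct^*\fh_3$ is a cotangent Lie algebra), so each carries an ad-invariant metric. The work is entirely in the ``only if'' direction: one must show that for every other pair $(m,k)$ the free nilpotent Lie algebra $\fn_{m,k}$ admits no ad-invariant metric. First I would dispose of the degenerate ranges: $k=1$ is abelian but then $\fn_{m,1}$ splits as a sum of one-dimensional ideals and is decomposable, and more to the point, we are after indecomposable obstructions, so I instead note that the abelian case is vacuously fine --- wait, it does admit a metric --- so the statement implicitly restricts to $k\geq 2$, and indeed $\fn_{m,1}=\RR^m$ does admit one; I would phrase the theorem's hypothesis as $k\geq 2$. (If the convention is $k\geq 2$, then $(m,k)$ with $m=1$ gives the abelian line, also metric, so really the interesting regime is $m\geq 2$, $k\geq 2$.)

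The main engine will be the dimension constraints from Lemma \ref{le1}, specifically Equation (\ref{e2}): on any Lie algebra with an ad-invariant metric, $\dim\frg=\dim C^r(\frg)+\dim C_r(\frg)$ for every $r$, where $C^r$ is the descending and $C_r$ the ascending central series. For a free nilpotent Lie algebra $\fn_{m,k}$ both series are completely understood: $C^r(\fn_{m,k})$ is the span of all brackets of length $\geq r+1$, with known dimensions given by (a truncation of) Witt's formula, and by freeness the ascending central series is also computable --- in particular $C_1(\fn_{m,k})=\fz(\fn_{m,k})=C^{k-1}(\fn_{m,k})$ is exactly the top layer, of dimension equal to the number of basic commutators of length exactly $k$. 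The strategy is then: plug $r=1$ into (\ref{e1}), giving $\dim\fn_{m,k}=\dim\fz+\dim C^1$, i.e. $m=\dim\fn_{m,k}-\dim C^1(\fn_{m,k})$ must equal $\dim\fz(\fn_{m,k})=\dim C^{k-1}(\fn_{m,k})$ (the top graded piece). So the necessary condition becomes the numerical identity
\[
m=\dim\big(C^{k-1}(\fn_{m,k})\big),
\]
that is, the number of generators equals the dimension of the top layer of the free nilpotent Lie algebra. One then checks that Witt's formula forces this only for $(m,k)=(2,3)$ (top layer of $\fn_{2,3}$ is $2$-dimensional) and $(m,k)=(3,2)$ (top layer of $\fn_{3,2}$ is $\binom{3}{2}=3$-dimensional), plus the trivial $(m,1)$ and $(1,k)$ which are excluded or abelian. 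For $k=2$ the top layer has dimension $\binom m2$, and $\binom m2=m$ forces $m=3$; for $k\geq 3$ the dimension of the top layer grows like $m^k/k$, vastly exceeding $m$ once $m\geq 3$ or $k\geq 4$, leaving only $(m,k)=(2,3)$ among $m=2$ (where the top-layer dimension is $1,1,2,3,6,\dots$ for $k=1,2,3,4,5,\dots$, matching $m=2$ exactly at $k=3$).

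The main obstacle is making the growth estimate on $\dim C^{k-1}(\fn_{m,k})$ clean and rigorous for all $(m,k)$ outside the two exceptions --- i.e., showing $\dim C^{k-1}(\fn_{m,k})>m$ for all $m\geq 2$, $k\geq 2$ with $(m,k)\notin\{(3,2),(2,3)\}$, and that it is $<m$ for $m\geq 4$, $k=2$. This is where I would invoke Witt's formula $\dim C^{k-1}(\fn_{m,k})=\frac1k\sum_{d\mid k}\mu(d)m^{k/d}$ and argue monotonicity in both variables: fixing $m\geq 2$, the sequence is eventually strictly increasing past $m$, and the finitely many small cases $(m,k)\in\{(2,2),(2,3),(2,4),(3,2),(3,3),(4,2),(5,2),\dots\}$ are checked by hand. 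A secondary subtlety: one should double-check that Equation (\ref{e1}) is really the sharpest elementary obstruction needed --- there is no danger here, since it already rules out everything, but one must be careful that for the surviving candidates $(2,3)$ and $(3,2)$ the metric genuinely exists (it does, by the cited examples), so the equivalence is tight. I expect the verification that $\fz(\fn_{m,k})=C^{k-1}(\fn_{m,k})$ for free nilpotent Lie algebras --- needed to turn (\ref{e1}) into the numerical identity above --- to require a short but genuine argument using the grading and freeness, rather than being a triviality; this, together with the combinatorial growth bound, is the technical heart. For the complete details I refer to \cite{BO}.
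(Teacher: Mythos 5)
Your argument is correct and is essentially the approach the paper attributes to \cite{BO} (``Hall basis for nilpotent Lie algebras and properties of ad-invariant metrics''): the Hall/Lyndon basis description of $\fn_{m,k}$ identifies $\fz(\fn_{m,k})$ with the top graded layer $C^{k-1}(\fn_{m,k})$ of Witt dimension $\tfrac1k\sum_{d\mid k}\mu(d)m^{k/d}$, and combining this with the orthogonality $\fz(\frg)=C^1(\frg)^{\perp}$ of Equation (\ref{e1}) reduces the ``only if'' direction to the Diophantine condition $m=\dim C^{k-1}(\fn_{m,k})$, whose only solutions with $m,k\geq 2$ are $(3,2)$ and $(2,3)$, while existence in those two cases is witnessed by $\fn_{3,2}=\ct^*\fh_3$ and the five-dimensional double extension $\fn_{2,3}$. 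Two harmless slips in your write-up: for $k=2$ and $m\geq 4$ the top layer has dimension $\binom{m}{2}>m$, not $<m$ (the only case with $<m$ is $(2,2)$), and for $m=2$ the top-layer dimensions are $2,1,2,3,6,\dots$ for $k=1,2,3,4,5$, not $1,1,2,3,6,\dots$ --- but since all you need is that the value differs from $m$ outside the two exceptional pairs, neither affects the conclusion.
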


The proof makes use of Hall basis for nilpotent Lie algebras and properties of ad-invariant metrics. The  result was extended to 
free metabelian nilpotent Lie algebras.

\begin{theorem}  \label{t2}  Let $\tilde{\fn}_{m,k}$ be the free metabelian k-step nilpotent Lie algebra on $m$ generators. Then $\tilde{\fn}_{m,k}$ admits an ad-invariant
metric if and only if $(m,k) = (3, 2)$ or $(m,k) = (2, 3)$.
\end{theorem}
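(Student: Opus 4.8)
The plan is to mirror the strategy behind Theorem \ref{t1}, reducing the statement about free metabelian nilpotent Lie algebras $\tilde{\fn}_{m,k}$ to the statement about free nilpotent Lie algebras $\fn_{m,k}$ already established. The two positive cases are identical: $\tilde{\fn}_{3,2} = \fn_{3,2}$ and $\tilde{\fn}_{2,3} = \fn_{2,3}$ coincide with the free nilpotent Lie algebras in those ranges (the metabelian relation is automatic in $2$-step nilpotency, and in $\fn_{2,3}$ the second derived ideal already vanishes), so these admit ad-invariant metrics by Theorem \ref{t1} (or directly from the partition list of Favre--Santharoubane). Hence the content is the negative direction: for all other pairs $(m,k)$ with $k \geq 2$ (and $m \geq 2$), $\tilde{\fn}_{m,k}$ carries no ad-invariant metric.

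For the negative direction I would first reduce to small cases by a propagation argument. If $\tilde{\fn}_{m,k}$ admits an ad-invariant metric, then so does any quotient of it by an ideal that is non-degenerate -- more usefully, one shows that admitting an ad-invariant metric is inherited by the relevant ``lower'' free metabelian algebras: there are surjections $\tilde{\fn}_{m,k} \twoheadrightarrow \tilde{\fn}_{m,k-1}$ and $\tilde{\fn}_{m,k} \twoheadrightarrow \tilde{\fn}_{m-1,k}$ whose kernels are (contained in) the last term of the descending central series, hence central, and one argues as in \cite{BO} that metrisability descends. So it suffices to rule out the minimal ``forbidden'' cases, namely $(m,k) = (2,4)$, $(m,k)=(3,3)$, $(m,k) = (4,2)$, and then conclude by induction that every pair beyond $(3,2)$ and $(2,3)$ fails.

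For each minimal forbidden case the obstruction comes from Lemma \ref{le1}: on a metrisable Lie algebra one has $C^r(\frg)^\perp = C_r(\frg)$, hence $\dim \frg = \dim C^r(\frg) + \dim C_r(\frg)$ for every $r$, and in particular $\dim\frg = \dim\fz(\frg) + \dim C^1(\frg)$. The strategy is to compute, using a Hall-type basis adapted to free metabelian nilpotent Lie algebras (the monomials $[x_{i_1},[x_{i_2},[\dots]]]$ with $i_2 \leq i_3 \leq \cdots$ reflecting the metabelian relation), the dimensions of the terms of the ascending and descending central series, and to exhibit an $r$ for which $\dim C^r(\tilde{\fn}_{m,k}) + \dim C_r(\tilde{\fn}_{m,k}) \neq \dim \tilde{\fn}_{m,k}$. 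Typically the center sits strictly inside $C^1$, or $C_r$ fails to be the exact orthogonal complement of $C^r$ for dimension reasons; the metabelian relation makes these dimensions genuinely smaller than in the free case, and one checks the inequality is strict. For $(4,2)$ one reuses the corollary that non-singular $2$-step nilpotent algebras are not metrisable, or the direct count $\dim\fz + \dim C^1$; for $(2,4)$ and $(3,3)$ one needs the explicit basis count.

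The main obstacle I expect is bookkeeping: assembling a workable basis for $\tilde{\fn}_{m,k}$ and computing $\dim C^r$ and $\dim C_r$ precisely enough to detect the failure of \eqref{e2}, especially in the case $(3,3)$ where the algebra is sizeable and the ascending central series must be computed by hand. A secondary subtlety is making the descent argument fully rigorous -- one must check that the kernel of $\tilde{\fn}_{m,k} \twoheadrightarrow \tilde{\fn}_{m-1,k}$ really is central and that metrisability passes to the quotient, which uses that a central ideal in a metric Lie algebra has non-degenerate-or-isotropic behaviour controlled by Lemma \ref{le1}(ii) exactly as in the proof of Theorem \ref{t1}. Once the minimal cases are settled, the induction is formal.
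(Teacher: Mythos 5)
Your reduction scheme has a genuine gap: metrisability does \emph{not} descend along the surjections you propose. The kernel of $\tilde{\fn}_{m,k}\twoheadrightarrow\tilde{\fn}_{m,k-1}$ is the last term of the lower central series, which is central but isotropic (it lies inside $C^1=\fz^\perp$ by Lemma \ref{le1}), and an ad-invariant metric on $\frg$ induces one on $I^\perp/I$, not on $\frg/I$, for an isotropic ideal $I$. A concrete counterexample to your descent claim: the oscillator algebra $\mathfrak{osc}$ carries an ad-invariant metric, its center is the isotropic line $\RR e_3$, yet $\mathfrak{osc}/\RR e_3\cong\fr'_{3,0}$ admits none (in dimension three only the abelian and the simple algebras do). The second surjection $\tilde{\fn}_{m,k}\twoheadrightarrow\tilde{\fn}_{m-1,k}$ is worse: its kernel is the ideal generated by the $m$-th generator, which is not central for $k\geq 2$. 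So the induction that is supposed to reduce everything to a few ``minimal forbidden cases'' does not get off the ground. You also overlook that every nilpotent Lie algebra of step $k\leq 3$ is automatically metabelian ($[C^1,C^1]\subseteq C^3=0$), so $\tilde{\fn}_{m,k}=\fn_{m,k}$ for all $k\leq 3$; in particular your cases $(4,2)$ and $(3,3)$ are already disposed of by Theorem \ref{t1} and need no new computation. The only genuinely new content of Theorem \ref{t2} is the range $k\geq 4$, which your argument never actually reaches.

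The paper closes exactly that range with the lemma stated right after the theorem: a $2$-step solvable Lie algebra carrying an ad-invariant metric is necessarily nilpotent of step at most $3$. Since $\tilde{\fn}_{m,k}$ is metabelian and exactly $k$-step nilpotent, an ad-invariant metric forces $k\leq 3$, whence $\tilde{\fn}_{m,k}=\fn_{m,k}$ and Theorem \ref{t1} finishes. Your alternative of verifying the failure of $\dim\frg=\dim C^r+\dim C_r$ on a Hall-type basis of $\tilde{\fn}_{m,k}$ could in principle be pushed through for $k\geq 4$ (the condition $\dim\fz=m$ applied to the top graded component is quite restrictive), but it requires identifying the center of the free metabelian nilpotent algebra and its graded dimensions for all $(m,k)$ --- substantially more bookkeeping than the structural lemma, and still contingent on repairing the descent step. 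I recommend you adopt the lemma, or at least restrict your direct computations to $k\geq 4$ and drop the quotient argument.
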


Actually the proof of Theorem \ref{t2} reduces to Theorem \ref{t1} in view of the following lemma. Recall that a Lie algebra $\frg$ is called {\em 2-step solvable} if $[C(\frg),C(\frg)]=0$.

\begin{lemma} Let $\frg$ denote a 2-step solvable Lie algebra provided with an ad-invariant metric, then $\frg$ is nilpotent
and at most 3-step.
\end{lemma}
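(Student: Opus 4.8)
The plan is to show that if $\frg$ is 2-step solvable with an ad-invariant metric $\la\,,\,\ra$, then $\frg$ is nilpotent of step at most $3$, by analyzing the descending central (and derived) series and exploiting the orthogonality relations from Lemma \ref{le1}. Write $C = C^1(\frg) = [\frg,\frg]$ for the commutator ideal; the hypothesis $[C,C] = 0$ says $C$ is abelian. First I would record the two structural facts we will use repeatedly: by Lemma \ref{le1}(i) the orthogonal complement of any ideal is an ideal, and by ad-invariance $(\ad x)$ is skew-symmetric, so for any ideal $\fh$ one has $[\frg, \fh^\perp] \subseteq (\text{something})$ controlled by $\fh$. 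Concretely, $\la [\frg, C^\perp], C\ra = \la C^\perp, [\frg, C]\ra \subseteq \la C^\perp, C\ra = 0$, so $[\frg, C^\perp] \subseteq C^\perp$ — but more usefully, iterating the skew-symmetry identity against suitable test vectors will pin down where brackets land.

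The key step is to locate $C^\perp$ inside the central series. Since $C = [\frg,\frg]$, for $x \in \frg$ and $z \in C^\perp$ we have $\la [x,z], \frg \ra$: testing against $y \in \frg$, $\la [x,z], y \ra = -\la z, [x,y]\ra = 0$ because $[x,y] \in C$ and $z \in C^\perp$. Hence $[x,z] = 0$ for all $x$, i.e. $C^\perp \subseteq \fz(\frg)$, and conversely $\fz(\frg) \subseteq C^\perp$ trivially, so $C^\perp = \fz(\frg)$ — this is exactly Lemma \ref{le1}(ii) with $r=1$ rephrased, $\fz(\frg) = C_1(\frg) = C^1(\frg)^\perp$. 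Now the dimension formula \eqref{e1} gives $\dim\frg = \dim\fz(\frg) + \dim C$. The crucial consequence is $C^\perp = \fz(\frg) \subseteq C$: indeed $\dim \fz(\frg) + \dim C = \dim \frg = \dim C^\perp + \dim C$ forces nothing directly, but one checks $\fz(\frg) \subseteq C$ cannot fail in general; instead the right move is to observe $[C,C]=0$ means $C$ is an abelian ideal, and $C^\perp = \fz(\frg)$. Then $[\frg, C] \subseteq C$ and, since $C$ is abelian, $[C,[\frg,C]] \subseteq [C,C] = 0$; combined with $C^{\perp} = \fz(\frg)$ we aim to squeeze the lower central series: $C_2(\frg) := [\frg,C]$ satisfies $C_2(\frg)^\perp = C^2(\frg)$ by Lemma \ref{le1}(ii) (the $r=2$ case, where $C^2(\frg) = [\frg,[\frg,\frg]]$), so nilpotency of step $\le 3$ is equivalent to $C^3(\frg) = [\frg,[\frg,[\frg,\frg]]] = 0$, equivalently $C_3(\frg) = \frg$, equivalently $C^3(\frg) \subseteq \frg^\perp = 0$.

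So the proof reduces to showing $C^3(\frg) = 0$, i.e. $[\frg,[\frg,C]] $ already lies in the center, or one more bracket kills it. Here I use 2-step solvability: $[\frg,C] \subseteq C$ and $C$ abelian gives $[[\frg,C],[\frg,C]] \subseteq [C,C] = 0$. To get $[\frg,[\frg,[\frg,C]]] = 0$, test with the metric: for $a,b,c,x \in \frg$ with the relevant elements, repeatedly apply \eqref{adme} to move brackets onto factors lying in $C$, using that two elements of $C$ bracket to zero. Explicitly, for $u \in C^3(\frg)$ and any $v \in \frg$, expand $\la u, v\ra$ by unwinding $u = [x_1,[x_2,[x_3,x_4]]]$ with $[x_3,x_4]\in C$: moving the outer $\ad x_1$ across gives $-\la [x_2,[x_3,x_4]], [x_1,v]\ra$, and now both $[x_3,x_4] \in C$ and we can move $\ad x_2$ to land inside, eventually producing a bracket of two elements of $C$, hence $0$; one must be careful to route the brackets so that a $[C,C]$ term appears, which is where the metabelian hypothesis is essential and where the bookkeeping is the main obstacle. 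Once $\la C^3(\frg), \frg\ra = 0$ we conclude $C^3(\frg) \subseteq \frg^\perp = 0$ by non-degeneracy, so $\frg$ is nilpotent of step at most $3$.

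The main obstacle I anticipate is precisely this last routing argument: organizing the iterated Jacobi/ad-invariance manipulations so that every term in the expansion of $\la C^3(\frg),\frg\ra$ visibly contains a subexpression $[C, C]$ or $\la C^\perp, C\ra$-type pairing. A clean way to sidestep brute force is to first prove the intermediate claim $[\frg, C] \subseteq \fz(\frg)$ would be too strong (it would give step $2$, which is false for $\fn_{2,3}$), so instead one proves $C^2(\frg) = [\frg,[\frg,\frg]]$ is abelian and contained in $\fz(\frg)$'s orthogonal setup so that $C^3 = [\frg, C^2]$ pairs trivially with all of $\frg$ — using $C^2(\frg)^\perp = C_2(\frg) \supseteq C$ (since $[C,\frg]\subseteq C$ forces... ) and the abelianness of $C$. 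I would present the argument via the chain $C^3(\frg) \perp \frg$: pick $w = [x, y]$ with $y \in C^2(\frg) \subseteq C$; then for any $v$, $\la [x,y], v\ra = -\la y, [x,v]\ra$ with $y \in C^2 \subseteq C$ and $[x,v] \in C$, and since $C$ is abelian the pairing $\la C^2, C\ra$... requires $C^2 \perp C$, which follows because $C^2 = [\frg,C] = [\frg, C^1]$ and $(C^1)^\perp = \fz \supseteq$? — this needs $C \subseteq (C^2)^\perp$, i.e. $\la [c_1,c_2'], [\frg,C]\ra$... Ultimately $\la C, C^2\ra = \la C, [\frg,C]\ra = \la [C,\frg], C\ra \subseteq \la C, C\ra$, not obviously zero, so the honest route is the full expansion above; that computation, while elementary, is the heart of the lemma.
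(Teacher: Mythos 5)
Your overall strategy is the right one --- reduce to showing that $C^3(\frg)=[\frg,[\frg,[\frg,\frg]]]$ pairs trivially with $\frg$, using non-degeneracy, the identity $(C^1(\frg))^{\perp}=\fz(\frg)$ from Lemma \ref{le1}, and the metabelian hypothesis $[C^1(\frg),C^1(\frg)]=0$ --- but the proof is not actually completed: the computation you yourself call ``the heart of the lemma'' is left as an unexecuted routing argument, and your one concrete attempt at it in the last paragraph goes in a circle. The reason you get stuck is that you only ever apply ad-invariance with the element \emph{outside} $C^1(\frg)$ as the acting element: from $\la [x,c],c'\ra$ you pass to $-\la c,[x,c']\ra$, which is again a pairing of $C^1(\frg)$ with $C^2(\frg)$, so nothing is gained and you (correctly) conclude it is ``not obviously zero.'' The missing observation is that ad-invariance makes the trilinear form $(a,b,c)\mapsto\la[a,b],c\ra$ invariant under cyclic permutations, so one should act by the element of $C^1(\frg)$ instead: for $x\in\frg$ and $c,c'\in C^1(\frg)$,
\[
\la [x,c],c'\ra=-\la [c,x],c'\ra=\la x,[c,c']\ra=0 ,
\]
since $C^1(\frg)$ is abelian. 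This says precisely that $C^2(\frg)=[\frg,C^1(\frg)]\subseteq (C^1(\frg))^{\perp}=\fz(\frg)$, whence $C^3(\frg)=[\frg,C^2(\frg)]\subseteq[\frg,\fz(\frg)]=0$ and $\frg$ is nilpotent of step at most $3$ (the bound being attained by $\fn_{2,3}$, as you note). With this one line the lemma is proved; no expansion of $\la C^3(\frg),\frg\ra$ against quadruple brackets is needed.

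A secondary point: several auxiliary assertions in your write-up are wrong or garbled and should simply be removed. The claim that ``$\fz(\frg)\subseteq C^1(\frg)$ cannot fail in general'' is false (take $\frg$ abelian) and is not used anywhere. Also, in the paper's notation $C_2(\frg)$ is the second term of the \emph{ascending} central series, not $[\frg,C^1(\frg)]$, so the line ``$C_2(\frg):=[\frg,C]$ satisfies $C_2(\frg)^{\perp}=C^2(\frg)$'' conflates the two series; the correct duality $C^2(\frg)^{\perp}=C_2(\frg)$ is in any case not needed once $C^2(\frg)\subseteq\fz(\frg)$ is established.
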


In particular the application of the previous lemma for a free metabelian Lie algebra $\tilde{\fn}_{m,k}$  says that  $k\leq 3$.  And such Lie algebras are free nilpotent.

Nilpotent metric Lie algebras of dimension at most 10 were determined by Kath in \cite{Ka}. In this work the author makes use of theory developped in \cite{KO}  for Lie algebras
with invariant non-degenerate inner product or, equivalently, for simply-connected Lie
groups with a bi-invariant pseudo-Riemannian metric. The author determines all nilpotent Lie algebras
$\fl$ with $\dim \fl'= 2$
which are used in the classification scheme in \cite{KOl}. Thus she obtained all nilpotent
metric Lie
algebras of dimension at most 10. This list includes the representations of some solvable Lie algebras which need more explanations on representation theory. We refer directly to \cite{Ka}. 

Also for nilpotent Lie algebras, making use of the tools in \cite{BCL}, Benito, de la Concepcion and Laiena determine  that over any field
$K$ of characteristic $0$, the set $Obj(\mathrm{Sym}_0(2,t)$ is empty if $t= 2, 4$. And they determine which are these elements for $t=3, 5$. 
See Theorem 5.2 \cite{BCL}.  In particular, over any field
$K$ of characteristic $0$:

\begin{itemize}
\item there are no nilpotent quadratic
Lie algebras of type 2 and nilpotent index 2 or 4.
 
\item The  quadratic nilpotent Lie algebras of type 2 and nilpotent
index 3 are obtained from $\fn_{2,3}$. 
\item The  quadratic nilpotent Lie algebras of type 2 and nilpotent
index 5 are obtained from $\fn_{2,5}$. 
\end{itemize}

\begin{theorem}\cite{BCL} Up to isomorphism, the indecomposable quadratic nilpotent Lie algebras over any algebraically closed field $K$ of  characteristic zero of type 2 and nilindex $\leq$ 5
or of type 3 and nilindex $\leq$  are isomorphic to one of the
following Lie algebras:
\begin{enumerate}[(i)]
\item
The
 free nilpotent Lie algebra
$(\fn_{2,3}, \phi)$ with basis
$\{a_i\}$ $i=1, \hdots 5$ and nonzero products
$[a_1,a_2]=a_3, \, [a_3,a_1] =a_4$ 
and $[a_3,a_2] = a_5$
where
$\phi(a_i,a_j) =\phi(a_j,a_i) = (-1)^{i-1}$ for
$i\leq j$ and $i+j= 6$ and $\phi(a_i,a_j) = 0$
otherwise.

\item The 7-dimensional Lie algebra $(\fn_{2,5,1}, \phi)$ with basis $\{a_i\}_{i=1}^7$ and nonzero Lie brackets $[a_2,a_1]=a_3,\,$ $[a_1,a_3]=a_4$,\, $[a_1,a_4]=a_5$,\, $[a_1,a_5]=a_6$,\, $[a_2,a_5]=a_7$,;

\item The 8-dimensional Lie algebra $(\fn_{2,5,2}, \phi)$ with basis $\{a_i\}_{i=1}^8$ and nonzero Lie brackets$[a_2,a_1]  = a_3$, $[a_3,a_1] = a_4$, $[a_3,a_2] = a_5$, $[a_4,a_1] = a_6$, $[a_6,a_1] = a_7$ and
$[a_6,a_2] = a_8$ where $\phi(a_i,a_j)= (-
1)^i$ for $i\leq j$ and
$i + j = 8$ and $\phi(a_i,a_j)=0$
otherwise.

\item The free nilpotent Lie algebra $\fn_{3,2}=\ct^* \fh_3$ with the canonical neutral metric, where $\fh_3$ denotes the Heisenberg Lie algebra of dimension three.

\item The 8-dimensional Lie algebra $(\fn_{3,3,1}, \phi)$ with basis $\{a_i\}_{i=1}^8$
and nonzero Lie brackets $[a_2,a_1] =a_4$,\, $[a_3,a_1] =a_5$,\, $[a_4, a_1] = a_6$, $[a_4,a_2] = a_7$ 0
and $[a_5,a_1] = a_8$ where $\phi(a_4,a_4) = \phi(a_5, a_5)  = 1$, $\phi(a_1,a_7) = 1 = -\phi(a_2,a_6) =-\phi(a_3,a_8)$ and $\phi(a_i, a_j)=0$ 
otherwise.

\item The 9-dimensional Lie algebra $(\fn_{3,3,1}, \phi)$ with basis $\{a_i\}_{i=1}^9$
and nonzero Lie brackets $[a_2,a_1] =a_4$,\, $[a_3,a_1] =a_5$,\, $[a_3,a_2] =a_6$,\,$[a_4, a_1] = a_7$,\,  $[a_4,a_2] = a_8$,\, $[a_5,a_1] = a_9=-[a_6,a_1]$ where $\phi(a_4,a_4) = \phi(a_5, a_5)  = \phi(a_6,a_6)=1$, $\phi(a_1,a_8) = 1 = -\phi(a_2,a_7) =-\phi(a_3,a_9)$ and $\phi(a_i, a_j)=0$ 
otherwise.
\end{enumerate}
Any non-abelian quadratic Lie algebra of type $\leq 2$ is indecomposable.
\end{theorem}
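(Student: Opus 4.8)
The plan is to reduce the whole statement to a finite computation on free nilpotent Lie algebras via the equivalence of categories $\mathrm{Sym}_0(d,t)\simeq\mathrm{NilpQuad}_{d,t}$ recalled above. By that equivalence an indecomposable quadratic nilpotent Lie algebra of type $d$ and nilpotency index exactly $t$ is, up to isometric isomorphism, a quotient $(\fn_{d,t}/\Ker B,\bar B)$ for some invariant symmetric bilinear form $B\in\mathrm{Obj}(\mathrm{Sym}_0(d,t))$, and two such quotients are isometrically isomorphic exactly when the forms lie in the same $\Aut(\fn_{d,t})$-orbit. So for each relevant pair $(d,t)$ it suffices to (1) describe $\mathrm{Obj}(\mathrm{Sym}_0(d,t))$, (2) isolate the members whose quotient is indecomposable, and (3) enumerate $\Aut(\fn_{d,t})$-orbits, reading off for one representative of each the quotient Lie algebra with its metric. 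Type $1$ yields only abelian algebras, so only $d=2,3$ occur; since $\mathrm{Obj}(\mathrm{Sym}_0(2,2))$ and $\mathrm{Obj}(\mathrm{Sym}_0(2,4))$ are empty, and an analogous vanishing holds for $d=3$ outside the stated range, the problem collapses to the four free nilpotent Lie algebras $\fn_{2,3}$, $\fn_{2,5}$, $\fn_{3,2}$ and $\fn_{3,3}$, of dimensions $5$, $14$, $6$ and $14$. For $d=2$ this step is precisely the content cited from \cite{BCL} translated through the category equivalence, so the real work lies in $d=3$.

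The engine for controlling invariant forms on a free nilpotent Lie algebra $\fn_{d,t}=\bigoplus_{i=1}^t\frg_i$ is a graded-orthogonality fact: pushing brackets across $B$ by \eqref{adme} and using the freeness relation $[\frg_1,\frg_k]=\frg_{k+1}$ one gets $B(\frg_i,\frg_j)=0$ whenever $i+j\ge t+2$ — a graded, possibly degenerate analogue of the identity $C^r(\frg)^{\perp}=C_r(\frg)$ of Lemma~\ref{le1} — and likewise $2B(\frg_1,\frg_2)=0$, so that component vanishes in characteristic zero. Passing to the metric quotient $\bar\fn=\fn_{d,t}/\Ker B$, the hypothesis $\Ker B\subseteq C^1(\fn_{d,t})$ forces the generators to survive, and Lemma~\ref{le1} together with equations~\eqref{e1}--\eqref{e2}, the surjectivity of the graded brackets $\bar\frg_1\times\bar\frg_k\to\bar\frg_{k+1}$ (again freeness), and the remark after Example~\ref{4dim} (an extra anisotropic central direction would make $\bar\fn$ decomposable) constrain the filtration dimensions $\dim C^{r-1}(\bar\fn)/C^r(\bar\fn)$ to a very short list. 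Carrying this out gives the dimension vector $(2,1,2)$ for $(2,3)$; $(2,1,1,1,2)$ and $(2,1,2,1,2)$ for $(2,5)$; $(3,3)$ for $(3,2)$; and, for $(3,3)$, exactly the two vectors producing the $8$- and $9$-dimensional algebras of items~(v)--(vi).

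For each admissible dimension vector I would then write the general invariant form compatible with the graded-orthogonality, solve the remaining invariance equations — a finite linear system over the free algebra (for $(3,2)$ this comes down neatly to the observation that the resulting tensor $B([e_i,e_j],\cdot)$ is totally alternating, hence one-dimensional, which pins $\Ker B=0$ and forces $\bar\fn=\fn_{3,2}=\ct^*\fh_3$ with the canonical neutral metric) — impose non-degeneracy of $\bar B$ and the top-layer condition, discard decomposable quotients, and finally normalise by $\Aut(\fn_{d,t})$, which is generated by $\mathrm{GL}_d$ acting on the degree-one generators together with the unipotent automorphisms adding higher-degree terms to generators. Over an algebraically closed field of characteristic zero this leaves a single orbit in each case, and computing the descended bracket and form on a representative reproduces exactly the Lie algebras $(\fn_{2,3},\phi)$, $(\fn_{2,5,1},\phi)$, $(\fn_{2,5,2},\phi)$, $\fn_{3,2}$ and the two $(\fn_{3,3,1},\phi)$ listed in~(i)--(vi). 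Separately, and first, I would prove the final assertion: $\mathrm{type}$ is additive under direct sums of ideals because $\frg\mapsto\dim(\frg/C^1(\frg))$ is, and a nilpotent Lie algebra with a single generator is abelian; hence a non-abelian $(\frg,B)$ of type $\le 2$ cannot split as $(\frg_1,B_1)\perp(\frg_2,B_2)$ with both summands nonzero, for each summand would then be $1$-generated, hence abelian, making $\frg$ abelian — a contradiction.

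The hard part will be step~(3) for the two $14$-dimensional algebras $\fn_{2,5}$ and $\fn_{3,3}$: not so much solving the invariance system as carrying out the $\Aut$-normalisation, because the unipotent part of the automorphism group genuinely mixes the graded components of $B$ (indeed the resulting metric need not be homogeneous, as $(\fn_{2,5,2},\phi)$ already shows), so one must bound how many distinct orbits survive and check that the candidates in each nontrivial case are pairwise inequivalent; this is exactly where algebraic closedness and characteristic zero are used.
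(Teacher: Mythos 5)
Your overall framework --- reduce, via the equivalence $\mathrm{Sym}_0(d,t)\simeq\mathrm{NilpQuad}_{d,t}$, to classifying $\Aut(\fn_{d,t})$-orbits of invariant forms on the four free algebras $\fn_{2,3}$, $\fn_{2,5}$, $\fn_{3,2}$, $\fn_{3,3}$ --- is exactly the scheme the survey attributes to \cite{BCL}; the paper itself offers no proof of this theorem beyond that citation and the description of the scheme, so you are not deviating in strategy, and your argument for the final indecomposability assertion (additivity of type plus the fact that a $1$-generated nilpotent algebra is abelian) is fine. But there is a concrete false step in your ``graded-orthogonality'' lemma: you claim ad-invariance forces $2B(\frg_1,\frg_2)=0$, hence $B(\frg_1,\frg_2)=0$ in characteristic zero. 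Invariance only gives that $(x,y,z)\mapsto B([x,y],z)$ is totally alternating on $\frg_1$, i.e.\ $B|_{\frg_1\times\frg_2}$ is an element of $\Lambda^3\frg_1^*$. This vanishes for $d=2$ because $\Lambda^3$ of a two-dimensional space is zero --- no factor of $2$ is involved --- and for $d=3$ it is a one-dimensional space that is genuinely nonzero in item (iv): the canonical neutral metric on $\fn_{3,2}=\ct^*\fh_3$ pairs the generators with $C^1(\fn_{3,2})$, which is precisely a nonzero $B(\frg_1,\frg_2)$, as your own parenthetical treatment of the $(3,2)$ case then exploits. So the bookkeeping feeding your dimension-vector analysis must be redone without this shortcut; in particular for $(3,3)$ you need a separate argument for why the $\Lambda^3\frg_1^*$-component (and $B|_{\frg_1\times\frg_1}$) can be normalised away by automorphisms or forces decomposability, rather than vanishing a priori.

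The second issue is that the decisive content --- solving the invariance equations on the two $14$-dimensional algebras $\fn_{2,5}$ and $\fn_{3,3}$, showing each admissible dimension vector carries exactly one $\Aut(\fn_{d,t})$-orbit of admissible forms, and checking indecomposability and pairwise non-isomorphism of the resulting quotients --- is announced but not carried out; you flag it yourself as ``the hard part.'' As written, this is a plan consistent with the cited source's method, containing one wrong lemma and missing the computation that actually produces the list (i)--(vi).
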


The following paragraphs are devoted to 2-step nilpotent Lie algebras with ad-invariant metrics. 
Let $\fn$ denote a 2-step nilpotent Lie algebra with an ad-invariant metric. In this case the following trilinear map
\begin{equation}\label{tri}
\psi(x,y,z)=\la [x,y], z\ra
\end{equation} is alternating. Moreover notice that one only need to consider $x,y,z\in \fh-\fz(\fn)$. This is the situation studied by Noui and Revoy \cite{NR}.

The map above (\ref{tri}) is related to the Koszul map. In fact  over a field and for finite-dimensional Lie algebras, the adjoint of the Koszul map is the map $J: (Sym^2\frg)^{\frg} \to \, Alt^3\frg$
mapping an invariant symmetric bilinear form $B$ on $\frg$
to the alternating trilinear form $J_B(x, y, z) = B([x, y], z)$;
the adjoint of the reduced Koszul map is the resulting map $(Sym^2\frg)^{\frg} \to H^3(\frg)$. To say that the reduced Koszul map is zero means that $J_B$ is an exact 3-cocycle for every
$B\in Sym(\frg)^{\frg}$.  Cornulier found the first example of a nilpotent Lie algebra with nonzero Koszul map. This Lie algebra has dimension 12 and it is 7-step nilpotent. 

It turns out that the vanishing condition of the reduced Koszul map is very
common and the semisimple case is quite peculiar. 
Cornulier proved the following:

\begin{theorem} Assume that the ground ring
$R$ is a commutative $\mathbb Q$-algebra (e.g.
a field of characteristic zero).  Let $\frg$ be a Lie algebra graded in a torsion-free abelian group $A$. Then the reduced Koszul map is zero in every nonzero degree $\alpha$.
\end{theorem}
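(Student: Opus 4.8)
The plan is to prove directly that if $B$ is a homogeneous invariant symmetric bilinear form on $\frg$ of a nonzero degree $\alpha\in A$, then the $3$-cocycle $J_B(x,y,z)=B([x,y],z)$ is a coboundary in the Chevalley--Eilenberg complex $C^\bullet(\frg)=\mathrm{Alt}^\bullet(\frg,R)$. Since the $A$-grading on $\frg$ induces gradings on the space of invariant symmetric bilinear forms and on $H^3(\frg)$ for which the reduced Koszul map $B\mapsto[J_B]$ is homogeneous of degree $0$, this is exactly the assertion that the map vanishes in degree $\alpha$. Write $\frg=\bigoplus_{a\in A}\frg_a$.

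First I would manufacture a weight. Since $A$ is torsion-free, the canonical map $A\to A\otimes_{\mathbb Z}\QQ$ is injective and carries $\alpha$ to a nonzero vector; choosing a $\QQ$-linear functional on $A\otimes_{\mathbb Z}\QQ$ equal to $1$ on the image of $\alpha$, restricting it to $A$, and postcomposing with the structure morphism $\QQ\to R$ (available because $R$ is a $\QQ$-algebra) produces an additive map $\phi\colon A\to R$ with $\phi(\alpha)=1$. Next I would form the derivation $D\in\Der(\frg)$ with $D|_{\frg_a}=\phi(a)\,\mathrm{id}$; it is a derivation because $\phi$ is additive. Testing on homogeneous vectors and using that $B$, hence $J_B$, is homogeneous of degree $\alpha$ with $\phi(\alpha)=1$, one records two identities: $B(Du,v)+B(u,Dv)=B(u,v)$ for all $u,v\in\frg$, and the Euler identity $J_B(Du,v,w)+J_B(u,Dv,w)+J_B(u,v,Dw)=J_B(u,v,w)$.

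The primitive I would use is $\eta(u,v):=\tfrac12\bigl(B(Du,v)-B(u,Dv)\bigr)$; this is alternating because $B$ is symmetric, so $\eta\in C^2(\frg)$, and it is homogeneous of degree $\alpha$. The claim is that $d\eta=-\tfrac12\,J_B$ (up to the sign conventions chosen for $d$). To check it I would expand $d\eta$ on a triple $(x,y,z)$ by the Chevalley--Eilenberg formula, substitute the definition of $\eta$, rewrite $D[x,y]=[Dx,y]+[x,Dy]$, and exploit the invariance of $B$, which makes $J_B$ totally cyclically symmetric and alternating; after this the ``diagonal'' contributions reassemble, via cyclic symmetry, into $J_B(Dx,y,z)+J_B(x,Dy,z)+J_B(x,y,Dz)$, which the Euler identity rewrites as $J_B(x,y,z)$, while the remaining ``mixed'' terms telescope to zero. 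Since $2$ and $\phi(\alpha)=1$ are invertible in $R$, it follows that $J_B$ is exact, so $[J_B]=0$ in $H^3(\frg)$ and the degree-$\alpha$ component of the reduced Koszul map is zero.

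The hypotheses are used only to build $\phi$: torsion-freeness of $A$ is precisely what allows one to separate $\alpha\neq 0$ from $0$ by a $\QQ$-valued character, and the $\QQ$-algebra hypothesis furnishes both the morphism $\QQ\to R$ and the invertibility of $2$; no finite-dimensionality of $\frg$ and no field assumption on $R$ are needed. I expect the only real work to be the bookkeeping in the computation $d\eta=-\tfrac12\,J_B$ --- conceptually harmless, since $\eta$ is just ``$\iota_D$ applied to $B$'' and one is carrying out a Cartan-type homotopy argument for the \emph{non-inner} derivation $D$, legitimate here because $D$ acts invertibly on the fixed homogeneous degree $\alpha$.
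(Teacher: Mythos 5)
The survey states this theorem without proof, merely citing Cornulier \cite{Cou}, so there is no in-paper argument to compare against; judged on its own, your proof is correct, and it is essentially the weight/Euler-derivation homotopy argument one expects here (and which, to my knowledge, is the mechanism in Cornulier's paper). The two pillars both check out: the construction of the additive character $\phi\colon A\to R$ with $\phi(\alpha)=1$ uses exactly the torsion-freeness of $A$ and the $\QQ$-algebra structure of $R$, and the identity $d\eta=-\tfrac12 J_B$ for $\eta(u,v)=\tfrac12\bigl(B(Du,v)-B(u,Dv)\bigr)$ is a correct computation: expanding $d\eta(x,y,z)=-\eta([x,y],z)+\eta([x,z],y)-\eta([y,z],x)$, using $D[x,y]=[Dx,y]+[x,Dy]$ and the full alternation of $J_B$ (which follows from invariance of $B$), each of the three terms $J_B(Dx,y,z)$, $J_B(x,Dy,z)$, $J_B(x,y,Dz)$ appears with total coefficient $-\tfrac12$, and the Euler identity collapses the sum to $-\tfrac12 J_B(x,y,z)$. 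The only cosmetic inaccuracy is your remark that leftover ``mixed'' terms telescope to zero: in fact there are no leftover terms, the expansion reassembles exactly into the Euler sum. One presentational point worth making explicit is that the theorem, for a general (possibly infinite) grading group, is intrinsically a statement about the homogeneous components of the graded Koszul map, so reducing to a homogeneous invariant form $B$ of degree $\alpha$ is not a loss of generality but the precise content of the claim; you say this, and it is the right reading.
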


Actually Cornulier gave several situations where the reduced Koszul map is zero. See more references and applications on 
related topics in \cite{Cou}. 
 
The following definition can be read in  \cite{NR}. The {\em corank} of a 2-step nilpotent Lie algebra $\fn$ is the number given by $corank(\fn):= \dim \fz(\fn)-\dim C(\fn)$.

\begin{proposition}
Let $\fn$ denote a real 2-step nilpotent  Lie algebra with an ad-invariant metric. Then $\fn$ is a direct product of an abelian ideal and a 2-step nilpotent ideal of corank zero of neutral signature. 
\end{proposition}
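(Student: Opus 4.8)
The plan is to decompose $\fn$ using the center and to show that the ``excess'' part of the center splits off as an abelian factor. I would first set $\fz:=\fz(\fn)$ and $C:=C^1(\fn)=[\fn,\fn]$. By Lemma~\ref{le1}(ii) with $r=1$ we have $C^{\perp}=\fz$, and since $\fn$ is $2$-step nilpotent, $C\subseteq\fz$, hence $C\subseteq C^{\perp}=\fz$, i.e. $C$ is isotropic. The corank of $\fn$ measures exactly $\dim\fz-\dim C$, so the idea is that whatever sits in $\fz$ beyond a subspace pairing nondegenerately with $C$ must be a nondegenerate central piece, and therefore splits as an abelian orthogonal factor.

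Concretely, I would choose a subspace $\fm\subseteq\fz$ with $\fz=\fm\oplus C$ (vector space direct sum), and invoke the Remark following Example~\ref{4dim}: such an $\fm$ is nondegenerate, and $\fm$ is automatically an ideal since it lies in the center. Thus $\fn=\fm\oplus\fm^{\perp}$ as an orthogonal direct sum of ideals, with $\fm$ abelian. It remains to analyze $\fn_0:=\fm^{\perp}$: it is again $2$-step nilpotent with an ad-invariant metric (restriction of $\la\,,\,\ra$, which is nondegenerate on $\fm^{\perp}$ because $\fm$ is nondegenerate). I would then show $\fn_0$ has corank zero. Since $C^1(\fn_0)\subseteq C^1(\fn)=C$ and in fact $C\subseteq\fn_0$ (because $C\subseteq\fz$ and $C$ is $\la\,,\,\ra$-orthogonal to $\fm\subseteq\fz\subseteq\fn$; one checks $\fm\perp\fz$ using that the center is $\fn$-invariant and the metric is ad-invariant, or directly from $\fz=C^{\perp}\supseteq C$ and the decomposition), we get $C^1(\fn_0)=C$. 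Meanwhile $\fz(\fn_0)=\fz(\fn)\cap\fn_0=\fz\cap\fm^{\perp}$; since $\fz=\fm\oplus C$ with $\fm$ nondegenerate and $C=\fm^{\perp}\cap\fz$ (by a dimension count, as $C$ is isotropic and complementary to the nondegenerate $\fm$ inside $\fz$), we obtain $\fz(\fn_0)=C=C^1(\fn_0)$, so $\operatorname{corank}(\fn_0)=0$.

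Finally, to see that the metric on the corank-zero part $\fn_0$ is neutral, I would apply the Example in the excerpt stating that a $2$-step nilpotent Lie algebra with ad-invariant metric satisfying $\fz(\fn_0)=C^1(\fn_0)$ has neutral signature and $\dim\fn_0=2\dim\fz(\fn_0)$. This is exactly the situation we are in, by the previous paragraph, so $(\fn_0,\la\,,\,\ra|_{\fn_0})$ is a $2$-step nilpotent ideal of corank zero and neutral signature, and $\fn=\fm\oplus\fn_0$ is the asserted orthogonal direct product of an abelian ideal and such an ideal.

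The main obstacle I anticipate is pinning down the orthogonality relations among $\fm$, $C$ and $\fz$ carefully enough to guarantee that $C=\fm^{\perp}\cap\fz$ and hence $\fz(\fn_0)=C^1(\fn_0)$ exactly (rather than merely an inclusion). The key facts to chase are: $C$ is isotropic and $C\subseteq\fz=C^{\perp}$; $\fm$ is nondegenerate; and $\fm$, being central, is orthogonal to $C$ precisely when... in fact $\fm\perp C$ fails in general, so the correct statement is that $\fz=\fm\oplus C$ is an orthogonal-type decomposition only after using that the restriction of the form to $\fz$ has radical $C$ (since $\Ker(\la\,,\,\ra|_{\fz})=\fz\cap\fz^{\perp}=\fz\cap C=C$), whence any complement $\fm$ to $C$ in $\fz$ is nondegenerate and one may arrange $\fm\perp C$. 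Getting this bookkeeping exactly right is the crux; everything else is a direct appeal to the cited Remark and Example.
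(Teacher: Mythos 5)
Your argument is correct and is exactly the proof the survey leaves implicit (the paper itself only states the result, citing Noui--Revoy): you combine Lemma~\ref{le1}(ii) giving $C^1(\fn)^{\perp}=\fz(\fn)$, the Remark that a complement $\fm$ of $C^1(\fn)\cap\fz(\fn)$ in $\fz(\fn)$ is non-degenerate (hence a central, thus abelian, orthogonal ideal factor), and the Example that $\fz=C^1$ forces a neutral metric with $\dim=2\dim\fz$ on the complementary factor. The only point worth tightening is at the end: $\fm\perp C^1(\fn)$ is automatic rather than something to arrange, since $C^1(\fn)=\fz(\fn)^{\perp}$ is precisely the radical of $\la\,,\,\ra|_{\fz(\fn)}$, so \emph{every} complement of $C^1(\fn)$ in $\fz(\fn)$ is simultaneously non-degenerate and orthogonal to $C^1(\fn)$, which immediately yields $\fz(\fn)\cap\fm^{\perp}=C^1(\fn)=C^1(\fm^{\perp})$ and hence corank zero for $\fm^{\perp}$.
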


Given the set of 3-forms on a vector space $V$ denoted by $\Lambda^3 V$ one has a natural action of $GL(V)$ given by
$A \cdot \omega (x,y,z)= \omega(Ax, Ay, Az)$ for all $x,y,z\in V$ and $\omega \in \Lambda^3 V$.

On the other hand the 2-step Lie algebras $(\fn,\la\,,\,\ra)$ and $(\fn',\la\,,\,\ra')$ are i-isomorphic (or isometric isomorphic) if there exists an isomorphism of Lie algebras $A : \fn \to \fn'$ which preserves the ad-invariant metric, that is 
$\la x, y\ra=\la Ax, Ay\ra'$ for all $x,y\in \fn$. In \cite{NR} Noui and Revoy  proves the next result.  

Let $\fn$ denote an indecomposable 2-step nilpotent Lie algebra.  Let $V$ denote the vector space $\fn/\fz$. Assume $\fn$ admits an ad-invariant metric $\la \,,\,\ra$. Then Equation (\ref{tri}) defines an alternating  trilinear  form on $V$, by
$$\omega(\bar{x}, \bar{y}, \bar{z}) = \la [x,y], z\ra$$
where $\bar{u}$ denotes the class of $u\in \fn/\fz$. It is not hard to see that $\omega$ is well defined. 

Conversely given an alternating trilinear form $\omega$ on $V$, take the direct sum as vector spaces $\fn:=V \oplus \fz$ where $\fz \simeq V$ as vector spaces. Choose a linear bijection $T$ between $V$ and $\fz$ and define a neutral metric $\la\,,\,\ra$ in $\fn$ so that $V$ and $\fz$ are totally real subspaces.
 Choose an inner product $\la \, \, \ra_V$ on $V$ and via de linear bijection complete a parity between $V$ and $\fz$: 
$$\la x, Ty \ra= \la x, y\ra_V \quad \mbox{ for } x, y\in V.$$
Define a Lie bracket on $\fn$ by $[x,y]=0$ for all $x\in \fz$, $y\in \fn$ and $[u,v]\in \fz$ so that
$$\la [u,v], w\ra = \omega(u,v,w), $$ 
equivalently $\la T^{-1}[u,v], w\ra_V = \omega(u,v,w)$. This depends on the choice of $\la\,,\,\ra_V$ and the bijection $V$. 

It is clear that if $\varphi: \fn \to \fn'$ is an i-isomorphism then 
$\varphi[x,y]= [\varphi x, \varphi y]$ and $\la \varphi x, \varphi y\ra=\la x, \la y$ induces an action on the alternating trilinear forms on $V=\fn/\fz=\fn'/\fz'$. Here we assume that we can identified these vector spaces. The converse is also true.  

\begin{theorem}
There exists a bijection between isomorphism classes of 2-step nilpotent Lie algebras with ad-invariant metrics of corank zero of dimension 2n and equivalence classes of alternating trilinear forms of rank $n$.
\end{theorem}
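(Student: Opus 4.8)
The plan is to show that the two constructions outlined just before the statement — passing from a corank-zero $2$-step nilpotent metric Lie algebra $(\fn, \la\,,\,\ra)$ of dimension $2n$ to the alternating trilinear form $\omega$ on $V = \fn/\fz(\fn)$, and passing from a trilinear form $\omega$ on an $n$-dimensional space back to a metric Lie algebra — are mutually inverse up to the respective equivalence relations, and that the rank of $\omega$ matches $n$. I would organise the argument in four steps.

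\emph{Step 1: the maps are well defined on equivalence classes.} First I would check that if $(\fn,\la\,,\,\ra)$ has corank zero then $\dim\fz(\fn)=\dim C(\fn)$, and since by Lemma \ref{le1}(ii) one has $C(\fn)=C^1(\fn)=\fz(\fn)^{\perp}$ while $\fz(\fn)=C^1(\fn)^{\perp}$, the subspace $\fz(\fn)$ is totally isotropic; hence $\dim\fn = 2\dim\fz(\fn)$, $V=\fn/\fz(\fn)$ has dimension $n$, and $\la\,,\,\ra$ identifies $\fz(\fn)$ with $V^*$. This makes $\omega(\bar x,\bar y,\bar z)=\la[x,y],z\ra$ well defined (independence of representatives follows from $\la[x,y],\fz(\fn)\ra=0$ and $[\fz(\fn),\fn]=0$), and alternating by ad-invariance together with $2$-step nilpotency. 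Conversely, in the reconstruction, $\fn:=V\oplus\fz$ with the bracket determined by $\la T^{-1}[u,v],w\ra_V=\omega(u,v,w)$ is genuinely $2$-step nilpotent of corank zero precisely when $\omega$ has rank $n$ (so that $C(\fn)$ is all of $\fz$ and $\fz(\fn)=\fz$); the metric is ad-invariant by construction. Then I would verify that an i-isomorphism $\varphi\colon\fn\to\fn'$ descends to a linear isomorphism $\bar\varphi\colon V\to V'$ with $\bar\varphi^*\omega'=\omega$, and conversely that a $\mathrm{GL}$-equivalence of trilinear forms lifts to an i-isomorphism (extending $\bar\varphi$ to $\fz$ by the dual/transpose-inverse so as to preserve the neutral pairing). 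The key point here is that the choice of $\la\,,\,\ra_V$ and of the bijection $T$ does not affect the i-isomorphism class: any two reconstructions from the same $\omega$ are i-isomorphic via a map that is the identity on $V$ and an appropriate isometry on $\fz$.

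\emph{Step 2: the two composites are the identity.} Starting from $(\fn,\la\,,\,\ra)$, forming $\omega$, and reconstructing gives a metric Lie algebra i-isomorphic to $(\fn,\la\,,\,\ra)$: choose a Lagrangian complement $W$ to $\fz(\fn)$ in $\fn$ (possible since $\fz(\fn)$ is totally isotropic of half dimension), identify $W\cong V$, and check that the bracket and metric on $\fn$ read off through this splitting are exactly those of the reconstruction for a suitable $\la\,,\,\ra_V$ and $T$. The other composite, starting from $\omega$, reconstructing, and then re-extracting the trilinear form, returns $\omega$ itself by the defining relation $\la[u,v],w\ra=\omega(u,v,w)$.

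\emph{Step 3: rank matches and indecomposability.} I would recall that the rank of an alternating trilinear form $\omega\in\Lambda^3 V$ is the minimal dimension of a subspace $U\subseteq V$ with $\omega\in\Lambda^3 U$ (equivalently, $V/\!\!\ker\omega$ where $\ker\omega=\{v:\iota_v\omega=0\}$). In the reconstruction, $v\in\ker\omega$ corresponds to a vector of $V\subset\fn$ that is central, so corank zero of $\fn$ forces $\ker\omega=0$, i.e. $\omega$ has rank $n=\dim V$. Conversely, the indecomposability discussion in the excerpt (a direct product of an abelian ideal and a corank-zero factor) shows that restricting attention to corank zero on the Lie side matches restricting to full-rank forms on the trilinear side, and that the dimension is $2n$ exactly when the rank is $n$.

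\emph{Main obstacle.} The routine bracket and ad-invariance verifications are straightforward; the genuinely delicate point is \emph{well-definedness of the reconstruction map on equivalence classes}, i.e. Step 1's claim that different choices of $\la\,,\,\ra_V$ and of the linear bijection $T\colon V\to\fz$ yield i-isomorphic Lie algebras, and dually that a bare $\mathrm{GL}(V)$-equivalence between two trilinear forms can always be upgraded to an \emph{isometric} Lie algebra isomorphism. The resolution is that once $\omega$ is fixed, the metric is forced up to the choice of a pairing between $V$ and $\fz$; changing this pairing is implemented by an automorphism of $\fn$ that is the identity on $V$ and a (suitably chosen) linear map on $\fz$, and ad-invariance pins this map down to be the inverse transpose of the change on $V$, which is automatically an isometry of the neutral metric. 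Carefully setting up this dictionary between the $\mathrm{GL}(V)$-action on $\Lambda^3 V$ and the action of i-isomorphisms fixing the splitting is where the real content lies; the rest is bookkeeping.
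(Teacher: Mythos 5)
Your proposal is correct and follows essentially the same route the paper sketches (and attributes to Noui--Revoy): corank zero forces $\fz(\fn)=C^1(\fn)=\fz(\fn)^{\perp}$, the neutral metric pairs $\fz$ with $V=\fn/\fz$, and the two constructions $\fn\mapsto\omega$ and $\omega\mapsto V\oplus\fz$ are mutually inverse up to i-isomorphism on one side and $\mathrm{GL}(V)$-equivalence on the other, with corank zero corresponding to $\omega$ having full rank $n$. Your identification of the delicate point — that changes of $\la\,,\,\ra_V$ and of $T$ are absorbed by an automorphism acting as the inverse transpose on $\fz$ — is exactly the content behind the paper's remark that the converse "is also true."
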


In \cite{Ov2} one finds an alternative description of 2-step nilpotent Lie algebras with
ad-invariant metrics. This is done by  a
parallelism with the Riemannian situation: 
A 2-step nilpotent Lie group $N$ endowed with a left-invariant Riemannian metric can be completely described by the following data: the two vector spaces equipped with inner products $(\fz, \la\,,\, \ra_{\fz})$ and $(\fv, \la \,, \, \ra_{\fv})$ and a linear map $j:\fz \to \mathfrak{so}(\fv,\la\,,\,\ra_{\fv})$. 

In fact, the Lie algebra $\fn$ of the Riemannian 2-step nilpotent Lie group $N$ is the orthogonal direct sum as vector spaces
$$\fn =\fz \oplus \fv$$
where one takes $\fz$ as the center of $\fn$ and $\fv$ is taken as $\fz^{\perp}$. The map $j$ satisfies 
\begin{equation}\label{j}
\la j(z) x, y\ra_{\fv}=\la z, [x,y]\ra_{\fz}
\end{equation}
 for all $x,y\in \fv, \, z\in \fz$. In the case of an indecomposable Lie algebra $\fn$ endowed with an ad-invariant metric  and corresponding Lie group $N$, it was proved in \cite{Ov2} that $\fn$ can be described by taking $\fz=\fv$ equipped with an inner product, a linear map $j:\fv \to \mathfrak{so}(\fv,\la\,,\,\ra_{\fv})$ satisfying Equation (\ref{j}) above together with $ j(z) x= -j(x) z$ for all $x,z\in \fv$. 

Also concerning families of nilpotent Lie algebras admitting ad-invariant metrics we found the results of del Barco \cite{dB}. They are surprising in the sense that when looking at known families of nilpotent Lie algebras one does not find many quadrable ones. The next result is for quadrable 2-step nilpotent Lie algebras associated to graphs. 

\begin{theorem} \cite{dB} The Lie algebra $\fn_G$ associated with a graph
$G$  admits an ad-invariant metric if and only if $\fn_G$
is isomorphic to a direct sum $\RR^s\oplus \fn_{3,2} \oplus \hdots \oplus \fn_{3,2}$. 
\end{theorem}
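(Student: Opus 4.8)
The plan is to read off two necessary conditions imposed on the graph by the existence of an ad-invariant metric, and then to finish with a short combinatorial argument. Recall that $\fn_G$ has underlying vector space $\RR^V\oplus\RR^E$ with canonical bases $\{e_v\}_{v\in V}$, $\{z_e\}_{e\in E}$, with $\RR^E$ central and $[e_u,e_v]=z_{\{u,v\}}$ if $\{u,v\}\in E$ and $0$ otherwise; it is $2$-step nilpotent, $C^1(\fn_G)=\RR^E$, and a direct computation shows $\fz(\fn_G)=\RR^E\oplus\RR^{V_0}$, where $V_0$ is the set of isolated vertices. The ``if'' direction is immediate: if $G$ is the disjoint union of $s$ isolated vertices and $m$ triangles, then $\fn_G$ is the Lie-algebra direct sum $\RR^s\oplus\fn_{3,2}^{\oplus m}$ (disjoint unions of graphs give direct sums of the associated Lie algebras, and $\fn_{K_3}=\fn_{3,2}$), and one equips $\RR^s$ with any nondegenerate symmetric bilinear form and each copy of $\fn_{3,2}=\ct^*\fh_3$ with its canonical neutral metric, the orthogonal direct sum being ad-invariant.

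For the ``only if'' direction, assume $\fn_G$ carries an ad-invariant metric $\la\,,\,\ra$. First, by Lemma~\ref{le1}(ii), $(\RR^E)^\perp=C^1(\fn_G)^\perp=C_1(\fn_G)=\fz(\fn_G)\supseteq\RR^E$, so $\RR^E$ is isotropic. Since $\fn_G$ is $2$-step nilpotent, the trilinear form $\psi(x,y,z)=\la[x,y],z\ra$ of \eqref{tri} is alternating, hence invariant under cyclic permutations, so $\la[e_u,e_v],e_w\ra=\la[e_v,e_w],e_u\ra=\la[e_w,e_u],e_v\ra$, which vanishes unless $\{u,v\}$, $\{v,w\}$, $\{w,u\}$ are all edges, i.e.\ unless $\{u,v,w\}$ spans a triangle. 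If some edge $e=\{u,v\}$ had $\la z_e,e_w\ra=\la[e_u,e_v],e_w\ra=0$ for every vertex $w$, then, since $\RR^E$ is isotropic as well, $z_e$ would lie in the kernel of $\la\,,\,\ra$, contradicting non-degeneracy; hence \emph{every edge of $G$ lies in a triangle}. Secondly, Equation~\eqref{e1} gives $|V|+|E|=\dim\fn_G=\dim\fz(\fn_G)+\dim C^1(\fn_G)=(|E|+|V_0|)+|E|$, so after deleting the isolated vertices one obtains a subgraph $G'$ with $|V(G')|=|E(G')|$.

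It then remains to prove the purely combinatorial statement: a finite simple graph with no isolated vertices, in which every edge lies in a triangle and which satisfies $|V|=|E|$, is a disjoint union of triangles. Here I would argue that the triangle condition forbids leaves, since the edge incident to a leaf lies in no triangle, so every vertex has degree at least $2$; summing degrees, $2|E|=\sum_v\deg v\ge 2|V|=2|E|$, so equality holds throughout and the graph is $2$-regular, i.e.\ a disjoint union of cycles, and a cycle of length at least $4$ has no triangle, forcing every component to be a triangle. Reinstating the $s$ isolated vertices and calling $m$ the number of triangular components yields $\fn_G\cong\RR^s\oplus\fn_{3,2}^{\oplus m}$, as claimed.

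The main obstacle I anticipate is not a single hard step but rather isolating exactly the right pair of necessary conditions and verifying they are jointly sufficient: the triangle-support of $\psi$ (coming from its being alternating) and the dimension identity \eqref{e1} must be used together, since, for instance, $C_4$ satisfies \eqref{e1} but not the triangle condition while $K_4$ satisfies the triangle condition but violates \eqref{e1}. Once this is recognized, all the Lie-algebraic computations are elementary and the remaining content is the short graph-theoretic lemma above.
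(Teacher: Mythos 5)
Your proof is correct, and I can verify every step against facts already recorded in the survey: Lemma~\ref{le1}(ii) gives $(\RR^E)^{\perp}=C^1(\fn_G)^{\perp}=\fz(\fn_G)\supseteq\RR^E$, so $C^1$ is isotropic; the ad-invariance identity makes $\psi(x,y,z)=\la[x,y],z\ra$ alternating, whence $\la z_{\{u,v\}},e_w\ra$ can be nonzero only when $u,v,w$ span a triangle, and non-degeneracy then forces every edge into a triangle; and Equation~(\ref{e1}) yields $|V_1|=|E|$ for the set $V_1$ of non-isolated vertices. The closing combinatorial lemma (minimum degree $\ge 2$ from the triangle condition, the handshake count forcing $2$-regularity, and cycles of length $\ge 4$ containing no triangle) is airtight, and the converse direction is immediate from the orthogonal sum of the canonical metrics on $\RR^s$ and on $\fn_{3,2}=\ct^*\fh_3$.

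Note that the survey itself gives no proof of this statement --- it only cites \cite{dB} --- so there is nothing in the text to compare against line by line. What your write-up buys is a short, self-contained derivation that uses only the two general obstructions the survey has already set up (the dimension identity $\dim\frg=\dim\fz+\dim C^1$ and the alternating trilinear form of~(\ref{tri})) plus an elementary graph argument, rather than invoking the corank-zero decomposition or the classification machinery for $2$-step nilpotent quadratic Lie algebras used elsewhere for such families. Your remark that neither necessary condition suffices alone ($C_4$ versus $K_4$) is a worthwhile observation and would be worth keeping if this argument were included in the text.
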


Recall the construction of $\fn_G$. Let $G=(
V, E)$ be a finite simple graph where $V$ is the set of vertices and
$E$ is the nonempty set of edges.  An edge given by the unordered pair
$v, w \in V$ is denoted $e=vw$. Let $
U$ be the subspace of $\Lambda^2 V_1$
spanned by $\{v\wedge w: v, w \in V,\,  vw\in E\}$,  where $V_1$ is the subset of vertices in $V$ belonging to at least one edge. 

 The Lie algebra $\fn_G$ associated to
$G$ is $\fn_G= V \oplus  U$
where the Lie bracket is
obtained by extending the following rules by skew-symmetry. If $v, w \in V$ then $[v, w] = v \wedge w$ 
if $vw\in E$ and $0$ otherwise, and $[v, u] = 0$ for all $v\in V$ and
$u\in U$. The dimension of $\fn_G$ is $|V|+|E|$ and it is 2-step nilpotent.

She also studied the family of nilradicals. The tools to get the next result include  Lie theory techniques so as the features mentioned above. 

\begin{theorem} \cite{dB} Let $\fn$ 
be a nilradical of a  parabolic subalgebra associated to a split real form of a
simple
Lie algebra
$\frg$. Then $\fn$ admits ad-invariant metric if and only if $\fn$ is abelian or it is
isomorphic to either $\fn_{3,2}$ or to $\fn_{2,3}$. 
\end{theorem}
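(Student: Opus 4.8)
\noindent The plan is to prove the ``if'' direction by exhibiting the metrics and the realizations as nilradicals, and the ``only if'' direction by combining the structural constraints of Section 2 with the explicit root-theoretic description of $\fn$. For the ``if'' direction: the abelian case is trivial; $\fn_{3,2}=\ct^*\fh_3$ carries the canonical neutral metric of Example~2.3, and $\fn_{2,3}$ carries an ad-invariant metric by Theorem~\ref{t1}; moreover all three occur as nilradicals of parabolics of split simple Lie algebras --- an abelian nilradical is obtained from any maximal parabolic of $\fsl(n,\RR)$, while $\fn_{3,2}$ is the nilradical of the maximal parabolic of $\mathfrak{so}(7)$ (type $B_3$) deleting the short simple root, and $\fn_{2,3}$ is the nilradical of a maximal parabolic of the exceptional Lie algebra of type $G_2$. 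So the content is the converse.

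For the converse, fix a parabolic $\fp=\fl\oplus\fn$ of the split real form $\frg$ determined by a subset $\Theta$ of the simple roots, and grade each root by the sum of the coefficients of the simple roots outside $\Theta$. This gives $\fn=\bigoplus_{k=1}^{m}\fn_k$ with $[\fn_i,\fn_j]\subseteq\fn_{i+j}$; it is standard that $\fn$ is generated by $\fn_1$ and that $[\fn_1,\fn_k]=\fn_{k+1}$, so this grading is exactly the filtration by the descending central series, $C^{j}(\fn)=\bigoplus_{k>j}\fn_k$. In particular $\fn_m$ is central, the type of $\fn$ equals $\dim\fn_1$, and $\fn$ is abelian if and only if $m=1$. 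Assume now $\fn$ admits an ad-invariant metric $\la\,,\,\ra$ and $m\ge 2$. First, Equation~(\ref{e1}) gives $\dim\fz(\fn)=\dim\fn_1$; since $\fn_1\cap C^1(\fn)=0$, any central element in $\fn_1$ spans an anisotropic central ideal (Remark after Example~\ref{4dim}), and splitting those off reduces us to the indecomposable case with $\fz(\fn)=\fn_m$, hence $\dim\fn_m=\dim\fn_1$. Next, a short induction on the ad-invariance identity $\la[u,v],w\ra=-\la v,[u,w]\ra$ shows $\la C^{j}(\fn),C^{m-j}(\fn)\ra=0$, i.e. $\la\fn_a,\fn_b\ra=0$ whenever $a+b>m+1$; since $\fn$ is generated by $\fn_1$ one can then propagate this along antidiagonals to obtain that the metric is supported on the ``palindromic'' pieces, forcing $\dim\fn_j=\dim\fn_{m+1-j}$ for all $j$, and Lemma~\ref{le1} together with non-degeneracy gives the sharper inequalities $\dim\fn_a\le\dim(\fn_1\oplus\cdots\oplus\fn_{m+1-a})$. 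Finally, the surjective Levi-equivariant bracket maps $\Lambda^2\fn_1\to\fn_2$ and $\fn_1\otimes\fn_k\to\fn_{k+1}$, combined with the alternating form (\ref{tri}) induced by the metric on $\fn/\fz(\fn)$, constrain how the $\fn_k$ decompose into $\fl$-modules.

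With this package of constraints in hand, the argument concludes by running through the finite list of parabolics of the split simple Lie algebras of types $A_n,\dots,G_2$. When $m=2$ one is left, after discarding abelian summands, with a $2$-step nilradical satisfying $[\fn_1,\fn_1]=\fn_2$, $\dim\fn_1=\dim\fn_2$, and having a non-degenerate associated trilinear form in the sense of Noui--Revoy; equivariance of $\Lambda^2\fn_1\to\fn_2$ together with the classification of $2$-step nilradicals forces $\dim\fn_1=\dim\fn_2=3$, so the bracket is an isomorphism $\Lambda^2\fn_1\xrightarrow{\ \sim\ }\fn_2$ and $\fn\cong\fn_{3,2}$. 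When $m=3$ the palindromic condition gives $\dim\fn_1=\dim\fn_3$, and since $\fn$ is automatically $2$-step solvable, generated in degree $1$, the only surviving root-theoretic possibility is the $G_2$-nilradical, which one recognizes as the free $3$-step Lie algebra on two generators, so $\fn\cong\fn_{2,3}$ (this also follows on appealing to Theorem~\ref{t1}, or to Theorem~\ref{t2} with the lemma that a $2$-step solvable metric Lie algebra is at most $3$-step nilpotent). Thus the abelian, $\fn_{3,2}$ and $\fn_{2,3}$ cases are the only ones. The main obstacle is precisely the case analysis for $m\ge 4$: one must verify that no nilradical of a parabolic of a simple split Lie algebra (equivalently, no ``long'' parabolic, those with a deleted node of coefficient $\ge 4$ in the highest root) has a palindromic dimension sequence compatible with $\dim\fn_m=\dim\fn_1$ and the non-degeneracy constraints above. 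There seems to be no way around inspecting these root systems individually, although the palindromy and the equality $\dim\fn_m=\dim\fn_1$ already eliminate almost all of them at once.
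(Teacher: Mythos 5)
The survey only states this theorem with a citation to \cite{dB} (``the tools \dots include Lie theory techniques so as the features mentioned above''), so there is no in-paper proof to compare against. Judged on its own terms, your proposal has the right general strategy --- the height grading of the nilradical, Equation~(\ref{e1}), the orthogonality relations coming from Lemma~\ref{le1}, and a final inspection of root systems --- and the ``if'' direction, including the realizations of $\fn_{3,2}$ and $\fn_{2,3}$ as the $B_3$ and $G_2$ nilradicals, is correct. But the ``only if'' direction as written is not a proof: its load-bearing step is unjustified, and is in fact false as stated.

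The critical problem is the ``palindromy'' claim $\dim\fn_j=\dim\fn_{m+1-j}$. What your antidiagonal propagation actually proves is the vanishing $\la \fn_a,\fn_b\ra=0$ for $a+b>m+1$ (correct), and together with Lemma~\ref{le1} this yields only the partial-sum inequalities $\sum_{k\le j}\dim\fn_k\ \ge\ \sum_{k\le j}\dim\fn_{m+1-k}$, not termwise equality: a symmetric Gram matrix vanishing below the block antidiagonal can be non-singular without the antidiagonal blocks being square (e.g.\ block sizes $(3,1)$ for $m=2$). Indeed the claim is false in the generality in which you use it: $\fn_{2,3}\oplus\fn_{3,2}$, graded in the obvious way, is generated by its degree-one part, satisfies $\fz\cap\fn_1=0$ and Equation~(\ref{e1}), carries an ad-invariant metric, and has dimension sequence $(5,4,2)$. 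So palindromy could only come from indecomposability --- and your reduction to the indecomposable case is itself problematic, because the orthogonal complement of a non-degenerate central ideal (or an indecomposable metric factor) of a nilradical need not be a nilradical of a parabolic, nor even a graded subalgebra, so the root-theoretic apparatus does not pass to it. Since the $m=2$ and $m=3$ cases and the elimination of $m\ge4$ all rest on palindromy, the argument does not close. A second, independent issue is that the case-by-case inspection of parabolics --- which is where essentially all of the content lives (why no other $2$-step nilradical has $\dim\fn_1=\dim\fn_2$ with a perfect pairing, why nothing survives for $m\ge 4$) --- is only announced, not performed. A repair should work directly with the constraints that do follow ($\dim\fz(\fn)=\dim\fn_1$, $\dim\fn_j\le\sum_{k\le m+1-j}\dim\fn_k$, and $\fn_2\subseteq\fz(\fn)$ isotropic when $m=2$) and check them against the explicit root data of each parabolic, as is done in \cite{dB}.
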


In the non-solvable case Benayadi and Elduque found  non-solvable Lie algebras with ad-invariant metrics of dimension $\leq 13$ \cite{BE}. They make use of the theory of classification of finite-dimensional representations of $\fsl(2,\FF)$. See  the next examples which appear in their list. We just write some of them in order to show the techniques. For the rest we refer to the original paper. 
 
\begin{example} \label{3.9}. Let $V$ be a two-dimensional vector space, endowed with a non-degenerate
skew-symmetric bilinear form $( \cdot | \cdot)$. On the vector space $\fa = \fsl(V) \oplus (V \otimes V)$ consider the symmetric
non-degenerate bilinear form $B_{\fa})$ such that $B_{\fa}(\fsl(V), V \otimes V) = 0$, $B_{\fa} (f, g) = trace(f g)$ and
$B_{\fa}(u_1 \otimes v_1, u_2 \otimes v_2) = (u_1|u_2)(v_1|v_2)$, for $f, g \in \fsl(V)$ and $u_1, u_2, v_1, v_2 \in V$. Consider the vector
space $\fa$ as an abelian Lie algebra. The linear map $\psi : \fsl(V) \to \Dera(\fa, B_{\fa})$ given by $\psi( f )(g) = [ f, g]$, $\psi( f )(u \otimes v) = f (u) \otimes v$,
for any $f, g \in \fsl(V)$ and $u, v \in V$, is a Lie algebra homomorphism. The double extension of $(\fa, B_{\fa})$ by $(\fsl(2,\FF), \psi)$  is an irreducible quadratic Lie algebra, with a Levi subalgebra isomorphic to
$\fsl(2, F)$ and such that, as a module for this subalgebra, it decomposes as the direct sum of three copies
of the adjoint module, and two copies of the natural two-dimensional module. Its dimension is then
13.
\end{example}
\begin{example} \label{3.11} Let $V$ be a two-dimensional vector space as above, endowed with a non-degenerate
skew-symmetric bilinear form $(\cdot | \cdot)$. On the vector space $\fb = V \otimes V$ consider the
symmetric non-degenerate bilinear form $B_{\fb}$ such that $B_{\fb}(u_1 \otimes u_2, v_1 \otimes  v_2) = (u_1|v_1)(u_2|v_2)$ for
any $u_1, u_2, v_1, v_2 \in V$. Consider the vector space $\fb$ as an abelian Lie algebra. The linear map
$\varphi : \fsl(V) \to \Dera(\fb, B_{\fb})$ given by $\varphi(f )(u \otimes v) = f (u) \otimes v$, for any $f\in \fsl(V)$ and $u, v \in V$, is a
Lie algebra homomorphism. The double extension of $(\fb, B_{\fb})$ by $\fsl(V)$ is a quadratic Lie algebra, with
a Levi subalgebra isomorphic to $\fsl(2.F)$  and such that, as a module for this subalgebra, it decomposes
as the direct sum of two copies of the adjoint module, and two copies of the natural two-dimensional
module. Its dimension is then 10.
\end{example}

\begin{example} \label{3.12} Let $V$ and $(\cdot| \cdot)$ as before. Consider the abelian Lie algebra $\fb = V \otimes V$ as
in Example \ref{3.11}. Fix a basis $\{u, v\}$ of $V$ with $(u|v) = 1$, and the one-dimensional Lie algebra
$\FF d$. Let $\varphi : \FF d \to \Der(\fb)$ be the Lie algebra homomorphism such that $\varphi(d)(u_1 \otimes  u) = u_1  \otimes u$,  $\varphi(d)(u_1 \otimes v) = - u_1 \otimes v$, for any $u_1\in V$. Then the double extension $\frg$ of $(\fb, B_{\fb})$ by $\FF d$ is a solvable Lie algebra of dimension 6, endowed with
an invariant scalar product $B_{\frg}$ of signature (3,3). This should appear in the classification above. 

Consider now the Lie algebra homomorphism $\phi : \fsl(V) \to \Der(\frg, B_{\frg})$ given by $\phi(s)(\FF d +
\FF d^*) = 0, \,\phi(s)(u_1 \otimes u_2) = s(u_1) \otimes u_2$ for any $s\in \fsl(V)$ and $u_1, u_2 \in V$. The double extension
of $(\frg, B_{\frg})$ by $(\fsl(V),\phi)$  is an irreducible quadratic Lie algebra, with a Levi subalgebra isomorphic to $\fsl(2,\FF)$
and such that, as a module for this subalgebra, it decomposes as the direct sum of two copies
of the adjoint module, two copies of the natural two-dimensional module, and two copies of the
trivial one-dimensional module. Moreover, the subalgebra formed by the two copies of the trivial
one-dimensional module act diagonally on the sum of the two natural modules.
\end{example}

 \begin{example} \label{3.13} Let $V$ and $(\cdot|\cdot)$ as before. Consider the abelian Lie algebra $\fb = V \otimes V$ as in
Example \ref{3.11}. Set, as before, a basis $\{u, v\}$ of $V$ with $(u|v) = 1$, and the one-dimensional Lie algebra
$\FF d$. Let $\varphi':\FF d \to \Der(\fb)$ be the Lie algebra homomorphism  $\varphi'(d)(u_1 \otimes  u) = u_1  \otimes u$,  $\varphi(d)(u_1 \otimes v) = 0$, for any $u_1\in V$ and take the double extension of $(\fb, B_{\fb})$ by $(\FF d, \varphi')$ which  is a solvable Lie algebra $\frg'$ of dimension 6, endowed with an
invariant scalar product $B_{\frg'}$. 

Consider now the Lie algebra homomorphism  $\phi': \fsl(V) \to  \Der(\frg, B_{\frg})$ given with the same 
formulas as in Example \ref{3.12}. The double extension of $(\frg', B_{\frg'})$ by $(\fsl, \phi')$ is an irreducible quadratic Lie
algebra, with a Levi subalgebra isomorphic to $\fsl(2,\FF)$ and such that, as a module for this subalgebra,
it decomposes as the direct sum of two copies of the adjoint module, two copies of the natural
two-dimensional module, and two copies of the trivial one-dimensional module. Moreover, the
subalgebra formed by the two copies of the trivial one-dimensional module act in a nilpotent way
on the sum of the two natural modules.
\end{example}

\begin{theorem} \cite{BE} 
The complete list, up to isomorphisms, of the non-solvable indecomposable quadratic real Lie algebras $\frg$ with
$\dim \frg \leq 13$ 
is the following:
\begin{enumerate}
\item $\dim \frg = 3$: the simple Lie algebras $\mathfrak{sl}(2, \RR)$ and the real $\mathfrak{su}(2)$,
\item $\dim \frg =6$ : the real simple Lie algebra $\mathfrak{sl}(2,\CC)$ and the trivial $T^*$-extensions $\ct^* \mathfrak{sl}(2,\RR)$ and $\ct^*\mathfrak{su}(2)$,
\item $\dim \frg = 8$: the simple Lie algebras
$\mathfrak{sl}(3,\RR)$, $\mathfrak{su}(3)$ and $\mathfrak{su}(2,1)$
\item $\dim \frg = 9$: the “scalar extensions”
$\mathfrak{sl}(2,\RR)\otimes_{\RR} \RR[x]/(x^3)$ and $\mathfrak{su}(2)\otimes_{\RR} \RR[x]/(x^3)$, 
\item $\dim \frg = 
10$: the simple Lie algebras $\mathfrak{so}(5,\RR)$, $\mathfrak{so}(4,1)$ and $\mathfrak{so}(3,2)$,
and the Lie algebras in
Example \ref{3.11} and Example 4.7 in \cite{BE}
\item $\dim \frg = 11$: the double extensions
$\fd(4)$  and $\breve{\fd}(4)$, and the Lie algebra in Example 3.14, all references in \cite{BE},
\item $\dim \frg  = 12$:
the ``scalar extensions'' $\mathfrak{s}\otimes \RR \mathcal{A}$ with either $\mathfrak{s}=\mathfrak{sl}(2,\RR)$ or $\mathfrak{su}(2)$ and with $\mathcal{A}=\RR[x]/(x^4)$, $\RR[x,y]/(x^2,y^2)$ or $\RR[x,y](x^3,y^3, x^2-y^2)$,
the Lie algebras in
Examples \ref{3.12}, \ref{3.13}, and Examples 4.8, 4.9 in \cite{BE}, and the trivial $T^*$-
extension $T^*_0(\mathfrak{sl}(3,\CC)$,
\item $\dim \frg = 13$:
the double extensions $\fd(6)$ and $\breve{\fd}(6)$ 
and the Lie algebras in Example \ref{3.9} and
Example 4.4 in \cite{BE}.
\end{enumerate}
\end{theorem}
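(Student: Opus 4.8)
The plan is to bootstrap from the structure theory of Section~2 --- principally the Medina--Revoy theorem (Theorem~\ref{MR}), used inductively through the double extension procedure --- together with the representation theory of the few semisimple Lie algebras small enough to occur as Levi factors in dimension at most $13$. One uses repeatedly that on a real simple Lie algebra the space of invariant symmetric bilinear forms is finite-dimensional and that a nonzero such form restricted to a simple ideal cannot be degenerate, since its kernel would be a proper ideal of a simple algebra.

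First I would settle the semisimple case. If $\frg$ is semisimple, decompose it into simple ideals; distinct simple ideals are mutually orthogonal, and each carries a non-degenerate restriction of the metric (otherwise that ideal would lie in $\Ker\la\,,\,\ra$), hence is a non-degenerate proper ideal unless $\frg$ itself is simple. So an indecomposable semisimple quadratic Lie algebra is simple, and it remains to list the real simple Lie algebras of dimension $\le 13$: dimension $3$ gives $\mathfrak{sl}(2,\RR)$ and $\mathfrak{su}(2)$; dimension $6$ gives $\mathfrak{sl}(2,\CC)$ regarded as a real Lie algebra; dimension $8$ gives $\mathfrak{sl}(3,\RR)$, $\mathfrak{su}(3)$, $\mathfrak{su}(2,1)$; dimension $10$ gives $\mathfrak{so}(5,\RR)$, $\mathfrak{so}(4,1)$, $\mathfrak{so}(3,2)$; and there is none in dimensions $4,5,7,9,11,12,13$.

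Now suppose $\frg$ is non-solvable but not semisimple, so the Levi decomposition $\frg=\fs\ltimes\fr$ has $\fs\neq 0$ and $\fr\neq 0$. Being indecomposable and not simple, by Theorem~\ref{MR} it is a double extension of a quadratic Lie algebra $(\fd,\la\,,\,\ra_\fd)$ either by a one-dimensional Lie algebra or by a simple Lie algebra $\fh$, with $\dim\fd=\dim\frg-2\dim\fh$. If the extending algebra is one-dimensional, then $\fd$ is again non-solvable (a double extension of a solvable algebra by a solvable one is solvable) and has dimension $\le 11$, so one proceeds by induction on the dimension; the new indecomposable algebras obtained at each stage are governed, via Proposition~\ref{isom} and the decomposability criteria of \cite{FSt}, by the classification of skew-symmetric derivations of $\fd$ up to conjugacy by $\Aut(\fd,\la\,,\,\ra_\fd)$ modulo inner ones --- for abelian $\fd$ this is the normal form problem for $\mathfrak{so}(p,q)$ under $O(p,q)$ recalled after Proposition~\ref{isom}. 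If instead $\fh$ is simple, then $\dim\fh\in\{3,6\}$ (there being no simple Lie algebra of dimension $4$ or $5$), so $\fh\in\{\mathfrak{sl}(2,\RR),\mathfrak{su}(2)\}$ with $\dim\fd\le 7$, or $\fh=\mathfrak{sl}(2,\CC)$ (over $\RR$) with $\dim\fd\le 1$. One then runs through all quadratic Lie algebras $\fd$ of these small dimensions --- available from Example~\ref{4dim} and the Baum--Kath list recalled above --- together with all Lie algebra homomorphisms $\fh\to\Dera(\fd,\la\,,\,\ra_\fd)$, i.e. all orthogonal $\fh$-module structures on $\fd$ compatible with its bracket. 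This is exactly where the representation theory of $\mathfrak{sl}(2,\FF)$ enters: the relevant $\fd$'s decompose under $\mathfrak{sl}(V)$, $\dim V=2$, into copies of the adjoint module, the natural two-dimensional module and the trivial module, and Examples~\ref{3.9}, \ref{3.11}, \ref{3.12}, \ref{3.13} display the shape of the resulting algebras.

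Finally one removes redundancy: discard the double extensions that are decomposable (using the criteria of \cite{FSt} and the cocycle identity, analogous to~(\ref{alfa}), that a genuine quadratic Lie algebra must satisfy), and keep one representative per isomorphism class, the isomorphism problem between double extensions by a fixed $\fh$ reducing --- via Proposition~\ref{isom} for one-dimensional $\fh$ and its analogue for simple $\fh$ --- to conjugacy of the defining data under $\Aut(\fd,\la\,,\,\ra_\fd)$. The main obstacle I expect to be precisely this bookkeeping in the non-semisimple, non-simple case: one must be exhaustive about the possible $\fs$-module decompositions of $\frg$ (equivalently of the nilradical and of $\fr/[\fr,\fr]$) subject to $\dim\frg\le 13$, check the compatibility conditions promoting each candidate to an actual quadratic Lie algebra, and carry out the isomorphism classification --- a finite but delicate case analysis rather than one clean argument, which is why the statement is recorded here with the full proof referred to \cite{BE}.
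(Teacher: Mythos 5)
Your outline is sound and follows the same strategy that the survey itself records around this theorem: the semisimple case reduces, via mutual orthogonality of simple ideals, to listing the real simple Lie algebras of dimension $3,6,8,10$, and the non-semisimple case is governed by double extensions whose extending algebra is $\mathfrak{sl}(2,\RR)$, $\mathfrak{su}(2)$, $\mathfrak{sl}(2,\CC)$ or a line, together with $\mathfrak{sl}(2)$-representation theory. Where you diverge from the actual argument of \cite{BE} is in the organization: you iterate the Medina--Revoy Theorem \ref{MR} and induct on dimension, whereas Benayadi and Elduque first pass to the complexification, show that the Levi subalgebra of a non-solvable irreducible quadratic Lie algebra of dimension $\leq 13$ is in almost all cases a single $\mathfrak{sl}(2,\CC)$, decompose the whole algebra as a module over it, and classify the admissible invariant brackets and metrics before descending to real forms. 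The module-theoretic route sidesteps two frictions your induction must absorb: in the one-dimensional-extension step the base $\fd$ need not be indecomposable, so the induction has to range over all orthogonal sums of smaller quadratic algebras (solvable summands included); and Proposition \ref{isom} settles the isomorphism problem only for one-dimensional extensions, so the simple-extension case requires a separate equivalence criterion that you invoke only by analogy. Since you explicitly defer the resulting finite case analysis rather than carry it out, what you have is a correct and workable plan rather than a proof --- which is also all the survey provides, the complete argument being referred to \cite{BE}.
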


The authors prove that over $\CC$ the
Levi component of a non-solvable irreducible quadratic Lie algebra is $\fsl(2, \CC)$ in most cases, and then
they make use of  the well-known representation theory of $\fsl(,\CC)$. 


Notice that a real simple Lie algebra of dimension three is isomorphic either to $\fsl(2,\RR)$ or $\mathfrak{su}(2)$. Hence if $\frg$, with dimension $\leq 13$ is a double extension of a Lie algebra $\fd$ by a simple Lie algebra, then 
$$\frg = \fh \oplus \fd \oplus \fh^*\quad \mbox{ with } \fh=\fsl(2,\RR) \mbox{\,\, or \, \, } \fh = \mathfrak{su}(2)\simeq\mathfrak{so}(3)$$

It is clear that $\dim \fd \leq 7$. The list of indecomposable Lie algebras of dimension $\leq 6$ was given a above. That is one could get $\fd$ from this list by the double extension procedure.  But it is also possible to take decomposable Lie algebras to start the double extension procedure to obtain $\frg$. 

For $\fd$ of dimension seven, one possibility is to take $\fd$ abelian but for the other cases one has non-trivial Lie algebras of dimension $7$  as double extensions of lower dimensional Lie algebras. In this case the possibilities for a quadratic five-dimensional solvable Lie algebra are the abelian Lie algebra and the nilpotent Lie algebra $\fn_{2,3}$, which admit  Lie subalgebras of skew-symmetric derivations, see \cite{BO}.  
A similar reasoning should be done for lower dimensional examples. 
Actually any indecomposable Lie algebra of dimension $\leq 6$ is a double extension of a Lie algebra of dimension $\leq 4$.

\section{Some geometry of Lie groups with bi-invariant metrics}

An ad-invariant metric on a Lie algebra gives rise to a bi-invariant metric on the corresponding connected Lie group, that is, a pseudo-Riemannian metric with is invariant by both, left and right translations by elements of the group.

One has the following equivalences \cite{On}. 
\begin{enumerate}[(i)]
\item $\la\,,\,\ra$ is bi-invariant;
\item $\la\,,\,\ra$ is $\Ad(G)$-invariant;
\item the inversion map $g \to g^{-1}$ is an isometry of $G$;
\item $\la [X, Y], Z\ra + \la Y, [X, Z]\ra = 0$ for all $X, Y, Z \in \frg$;
\item $\nabla_X Y= \frac12 [X,Y]$ for all $X,Y\in \frg$ where $\nabla$ denotes the Levi-Civita connection for $\la \,,\,\ra$.
\item the geodesics of $G$ starting at the identity element $e$ are the one-parameter subgroups of $G$.
\end{enumerate}

By making use of this information one gets that the curvature tensor 
is given by 
$$R(X, Y) = - \frac14 \ad([X, Y])\qquad \mbox{ for  } X, Y \in \frg.$$
Thus the Ricci tensor $Ric(X, Y) = tr(Z \to R(Z, X)Y)$ is given by
$$Ric(X, Y) = - \frac14 B(X, Y)$$
where $B$ denotes the Killing form on $\frg$ given by 
$B(X, Y) = tr(\ad(X) \circ \ad(Y))$.

The formula for the curvature tensor says that 2-step nilpotent Lie groups with bi-invariant metrics are flat. 

Since Lie groups provided with bi-invariant metrics are symmetric spaces, its isometry group can be computed with help of the Ambrose-Singer-Cartan Theorem. See also \cite{Mu}. Its not hard to see that if $G$ denotes a Lie group equipped with a bi-invariant metric, then the isometry group $Iso(G)$ is given by
$$Iso(G)= F(G) \cdot L(G)\quad$$
 where $L(G)$ denotes the subgroup of translations on the left and $F(G)$ denotes the subgroup of isometries fixing the identity element.

\begin{lemma} Let $G$ be a simply connected Lie group with a bi-invariant pseudo-Riemannian metric $\la\,,\,\ra$. Then a linear isomorphism $A : \frg \to \frg$ is the differential of some isometry which fixes the identity element if and only if for all $X, Y, Z \in \frg$, the linear map $A$ satisfies the following two
conditions:
\begin{enumerate}[(i)]
\item $\la AX, AY \ra =  \la X, Y \ra$;
\item $A[[X, Y], Z] = [[AX, AY], AZ]$.
\end{enumerate}
\end{lemma}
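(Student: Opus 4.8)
The plan is to characterize the differential at the identity of an isometry of $G$ that fixes $e$. Since $(G,\la\,,\,\ra)$ with a bi-invariant metric is a symmetric space, and since $G$ is simply connected, the key tool is the standard fact (a consequence of the Ambrose--Singer--Cartan theorem for simply connected symmetric spaces) that an isometry fixing $e$ is uniquely determined by its differential $A$ at $e$, and such a differential exists precisely when $A$ is a linear isometry of $(T_eG,\la\,,\,\ra)=(\frg,\la\,,\,\ra)$ that preserves the curvature tensor $R$ at $e$. Thus the whole statement reduces to translating the two conditions ``$A$ preserves $\la\,,\,\ra$'' and ``$A$ preserves $R$'' into the two displayed conditions (i) and (ii). Condition (i) is literally the first requirement, so nothing is to be done there.

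For condition (ii), I would use the curvature formula recorded just above the lemma: for a Lie group with bi-invariant metric, $R(X,Y)Z = -\tfrac14[[X,Y],Z]$ for all $X,Y,Z\in\frg$. An isometry $\phi$ fixing $e$ with $d\phi_e=A$ satisfies $A\bigl(R(X,Y)Z\bigr)=R(AX,AY)(AZ)$ for all $X,Y,Z$; substituting the curvature formula on both sides gives
$$-\tfrac14\,A[[X,Y],Z] = -\tfrac14\,[[AX,AY],AZ],$$
which, after cancelling the constant $-\tfrac14$, is exactly condition (ii). Conversely, if a linear isometry $A$ satisfies (ii), then it preserves $R$ by the same computation run backwards, and hence by the symmetric-space criterion there is a (unique) isometry fixing $e$ with differential $A$. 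This gives both implications.

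The step I expect to require the most care is the appeal to the symmetric-space reconstruction theorem: one must justify that, for a simply connected pseudo-Riemannian symmetric space, a linear isometry of the tangent space preserving the curvature tensor at the base point really does integrate to a global isometry fixing that point. This is where completeness and simple connectedness of $G$ (a Lie group with bi-invariant metric is geodesically complete, its geodesics through $e$ being one-parameter subgroups) enter, and it is the only non-formal ingredient; the passage to conditions (i)--(ii) themselves is then a routine substitution using the explicit curvature formula. One should also note that here only the curvature tensor $R$ need be preserved, not its covariant derivatives, because for a symmetric space $\nabla R=0$, so the infinitesimal data at $e$ consist of the metric and $R$ alone.
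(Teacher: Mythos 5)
Your argument is correct and is exactly the route the paper intends: the surrounding text already records that $G$ with a bi-invariant metric is a symmetric space whose curvature is $R(X,Y)=-\tfrac14\ad([X,Y])$ and invokes the Ambrose--Singer--Cartan theorem, so the lemma reduces, as you say, to ``linear isometry preserving $R$ at $e$'' being equivalent to (i) and (ii), with completeness (geodesics through $e$ are one-parameter subgroups) and simple connectedness supplying the global extension in the converse direction. The paper itself omits the proof and defers to the reference (M\"uller), so there is nothing further to reconcile.
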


As a consequence, on a 2-step nilpotent Lie group $N$ provided with a bi-invariant metric,   any isometry fixing the identity element  does not see the algebraic structure of $N$.  The isometry group of the oscillator groups provided with the bi-invariant metric was computed by Bourseau in \cite{Bou}.

\begin{example} Let $N$ denote a 2-step nilpotent Lie group provided with a bi-invariant metric. Let $\fn$ denote its Lie algebra and assume this has corank zero. Then the isometry group is 
$$Iso(N)=O(n,n) \cdot N,$$
  where $n=\dim \fz(\fn)$, $O(n,n)$ denotes the orthogonal group neutral signature $(n,n)$ and as usual $N$ is identified with the subgroup of left-translations $L(N)$. See \cite{Ov2}. 
\end{example}

In \cite{BBM} the authors study  Lie algebras over a field $\mathbb K$ of null characteristic which admit, at the same time, an ad-invarint metric and a symplectic structure. If $\mathbb  K$ is algebraically closed,  such Lie algebra may be constructed as the $T^*$-extension of a nilpotent Lie algebra admitting an invertible derivation and also as the double extension of another  symplectic Lie algebra with an ad-invariant metric by the one-dimensional Lie algebra. Every symplectic quadratic Lie algebra is a special symplectic Manin algebra and the  double extension procedure applies. See details in the mentioned paper.

\

Acknowledgments: The author thanks the referee for the additional  bibliography  and the suggestions to improve the presentation of this work. 

G. Ovando also thanks the organizers for  the invitation to participate in the Conference in honour of Sergio Console, Torino february of 2015. 

\smallskip

{\em  Este trabajo est\'a enteramente dedicado a la memoria de Sergio Console, como  colega y amigo}. 




\footnotesize

\bigskip

\begin{flushleft}

{\bf AMS Subject Classification: 22E25, 22E60, 53B30}\\[2ex]

%
Gabriela P. Ovando,\\
Departamento de Matem\'atica, ECEN-FCEIA, Universidad Nacional de Rosario\\
Pellegrini 250, 2000 Rosario, ARGENTINA\\
e-mail: \texttt{gabriela@fceia.unr.edu.ar}\\[2ex]

%

\end{flushleft}

\end{document}